\renewcommand{\theequation}{\thesection.\arabic{equation}}
\makeatletter\@addtoreset{equation}{section}\makeatother
\theoremstyle{plain}
  \newtheorem{theorem}[equation]{Theorem}
  \newtheorem*{theorem*}{Theorem}
  \newtheorem{proposition}[equation]{Proposition}
\theoremstyle{definition}
  \newtheorem{definition}[equation]{Definition}
  \newtheorem{lemma}[equation]{Lemma}
  \newtheorem{corollary}[equation]{Corollary}
  \newtheorem{notation}[equation]{Notation}
\theoremstyle{remark}
  \newtheorem{remark}[equation]{Remark}
 \DeclareFontFamily{U}{manual}{}
 \DeclareFontShape{U}{manual}{m}{n}{ <->  manfnt }{}
 \newcommand{\manfntsymbol}[1]{%
    {\fontencoding{U}\fontfamily{manual}\selectfont\symbol{#1}}}
 \newenvironment{warning}[1][]{
    \begin{trivlist} \item[] \noindent%
      \begingroup\hangindent=2pc\hangafter=-2
      \clubpenalty=10000%
      \hbox to0pt{\hskip-\hangindent\manfntsymbol{127}\hfill}\ignorespaces%
      \refstepcounter{equation}\textbf{Warning~\theequation}%
      \@ifnotempty{#1}{\the\thm@notefont \ (#1)}\textbf{.}
      \let\p@@r=\par \def\p@r{\p@@r \hangindent=0pc} \let\par=\p@r}%
    {\hspace*{\fill}
     \endgraf\endgroup\end{trivlist}}
 \newenvironment{example}[1][]{
   \refstepcounter{equation}
   \begin{proof}[Example~\theequation%
   \@ifnotempty{#1}{ (#1)}.]
   }
  {\end{proof}}
 \DeclareFontFamily{OT1}{pzc}{}
 \DeclareFontShape{OT1}{pzc}{m}{it}{<-> s * [1.100] pzcmi7t}{}
 \DeclareMathAlphabet{\mathpzc}{OT1}{pzc}{m}{it}
\DeclareMathOperator{\ext}{Ext}
\DeclareMathOperator{\sat}{Sat}
\newcommand{\tor}{\mathrm{tor}}
\newcommand{\hhat}[1]{\widehat{#1}}
\renewcommand{\ng}{\text{-1}}
\newcommand{\qcoh}{\text{\sf QCoh}}
\renewcommand{\b}{\mathbf{b}}
\newcommand{\id}{\mathrm{id}}
\renewcommand{\AA}{\mathbb A}
\newcommand{\A}{\mathcal A}
\newcommand{\gp}{\mathrm{gp}}
\newcommand{\D}{\mathcal D}
\renewcommand{\L}{\mathcal L}
\newcommand{\X}{\mathcal X}
\newcommand{\Y}{\mathcal Y}
\newcommand{\Z}{\mathcal Z}
\newcommand{\ZZ}{\mathbb Z}
\newcommand{\PP}{\mathbb P}
\newcommand{\QQ}{\mathbb Q}
\newcommand{\NN}{\mathbb N}
\newcommand{\GG}{\mathbb G}
\DeclareMathOperator{\cok}{cok}
\newcommand{\bbar}[1]{\overline{#1}}
\newcommand{\ttilde}[1]{\widetilde{#1}}
\newcommand{\K}{\mathcal{K}}
\renewcommand{\setminus}{\smallsetminus}
\let\hom\relax
\DeclareMathOperator{\hom}{Hom}
\renewcommand{\O}{\mathcal{O}}
\DeclareMathOperator{\spec}{Spec}
\DeclareMathOperator{\Spec}{\mathcal{S}\!\mathpzc{pec}}
\DeclareMathOperator{\pic}{Pic}
\newcommand{\smat}[1]{\left(\begin{smallmatrix}#1\end{smallmatrix}\right)}
\newcommand{\F}{\mathcal F}
\renewcommand{\a}{\mathbf a} 
\DeclareMathOperator{\im}{im}
 \def\ari[#1]{\ar@{^(->}[#1]}
 \def\are[#1]{\ar[#1]^{\txt{\'et}}}
 \def\areh[#1]{\ar[#1]|{\txt{$H$-eq}}^{\txt{\'et}}}
 \def\ars[#1]{\ar@{->>}[#1]}
 \newcommand{\dplus}{\ar@{}[d]|{\mbox{$\oplus$}}}
 \newcommand{\dtimes}{\ar@{}[d]|{\mbox{$\times$}}}
\begin{document}
\title{Toric Stacks I: The Theory of Stacky Fans}
\author{Anton Geraschenko}
\author{Matthew Satriano}
\thanks{The second author is partially supported by NSF grant DMS-0943832.}
  \subjclass[2010]{
  14D23,
  14M25.
  }
\date{}

\begin{abstract}
 The purpose of this paper and its sequel \cite{toricartin2} is to introduce and develop a theory of toric stacks which encompasses and extends the notions of toric stacks defined in \cite{lafforgue,bcs,fmn,iwanari,can,tyomkin}, as well as classical toric varieties.

 In this paper, we define a \emph{toric stack} as the stack quotient of a toric variety by a subgroup of its torus (we also define a generically stacky version). Any toric stack arises from a combinatorial gadget called a \emph{stacky fan}. We develop a dictionary between the combinatorics of stacky fans and the geometry of toric stacks, stressing stacky phenomena such as canonical stacks and good moduli space morphisms.

 We also show that smooth toric stacks carry a moduli interpretation extending the usual moduli interpretations of $\PP^n$ and $[\AA^1/\GG_m]$. Indeed, smooth toric stacks precisely solve moduli problems  specified by (generalized) effective Cartier divisors with given linear relations and given intersection relations. Smooth toric stacks therefore form a natural closure to the class of moduli problems introduced for smooth toric varieties and smooth toric DM stacks in \cite{cox:smooth} and \cite{perroni}, respectively.

 We include a plethora of examples to illustrate the general theory. We hope that this theory of toric stacks can serve as a companion to an introduction to stacks, in much the same way that toric varieties can serve as a companion to an introduction to schemes.
\end{abstract}
\maketitle

\tableofcontents

\section{Introduction}\label{sec:intro}

A number of theories of toric stacks have recently been introduced \cite{lafforgue, bcs,fmn,iwanari,can,tyomkin}. There are several reasons why one may be interested in developing such a theory. First, these stacks provide a natural place to test conjectures and develop intuition about algebraic stacks, in much the same way that toric varieties do for schemes. After developing an adequate theory, they are easy to work with combinatorially, just as toric varieties are. Second, in some situations toric stacks can serve as better-behaved substitutes for toric varieties. A toric variety has a canonical overlying smooth stack.
It is sometimes easier to prove results on the smooth stack and ``push them down'' to the toric variety. Third, with the appropriate machinery, one can show that an ``abstract toric stack'' (e.g.~the closure of a torus within some stack of interest) often arises from a combinatorial stacky fan. The combinatorial theory of stacky fans then allows one to effectively investigate the stack in question.

We are aware of three kinds of toric stacks in the literature.
\begin{description}
 \item[Lafforgue's Toric Stacks.] In \cite{lafforgue}, Lafforgue defines a toric stack to be the stack quotient of a toric variety by its torus. These stacks are very ``small'' in the sense that they have a dense open point. They are rarely smooth. 
 \item[Smooth Toric Stacks.] Borisov, Chen, and Smith define smooth toric Deligne-Mumford stacks in \cite{bcs}. These are the stacks studied in \cite{fmn} and \cite{iwanari}. They are smooth and have simplicial toric varieties as their coarse moduli spaces. The second author generalized this approach in \cite{can} to include certain smooth toric Artin stacks which have toric varieties as their good moduli spaces.
 \item[Toric Varieties and Singular Toric Stacks.] Toric varieties are neither ``small'' nor smooth in general, so the standard theory of toric varieties is not subsumed by the above approaches. In \cite[\S 4]{tyomkin}, Tyomkin introduces a definition of toric stacks which includes all toric varieties (in particular, they may be singular). However, these toric stacks always have finite diagonal, and the definition is very local. 
\end{description}

We define a toric stack to be the stack quotient of a normal toric variety $X$ by a subgroup $G$ of its torus $T_0$. Note that the stack $[X/G]$ has a dense open torus $T=T_0/G$ which acts on $[X/G]$. Therefore, such toric stacks have trivial generic stabilizer. Many of the existing theories of toric stacks allow for generic stackiness. In order to encompass these in our theory, we enlarge our class of toric stacks to non-strict toric stacks with the following observation.

An integral $T$-invariant substack of $[X/G]$ is necessarily of the form $[Z/G]$ where $Z\subseteq X$ is an integral $T_0$-invariant subvariety of $X$.\footnote{Note that $Z$ must be irreducible because $G$ cannot permute the irreducible $T_0$-invariant subvarieties of $X$.} The subvariety $Z$ is naturally a toric variety whose torus $T'$ is a quotient of $T_0$. The quotient stack $[Z/G]$ contains a dense open ``stacky torus'' $[T'/G]$ which acts on $[Z/G]$.
\begin{definition}\label{def:toric-stack}
 In the notation of the above two paragraphs, a \emph{toric stack} is an Artin stack of the form $[X/G]$, together with the action of the torus $T=T_0/G$. A \emph{non-strict toric stack} is an Artin stack which is isomorphic to an integral closed torus-invariant substack of a toric stack, i.e.~is of the form $[Z/G]$, together with the action of the stacky torus $[T'/G]$.
\end{definition}

\noindent  This definition encompasses and extends the three kinds of toric stacks listed above:
 \begin{itemize}
  \item Taking $G$ to be trivial, we see that any toric variety $X$ is a toric stack.
  \item Smooth toric Deligne-Mumford stacks in the sense of \cite{bcs,fmn,iwanari} are smooth non-strict toric stacks which happen to be separated and Deligne-Mumford. See Remarks \ref{rmk:compatibility-with-other-toric-stacks} and \ref{rmk:non-strict-is-closed-substack}.
  \item Toric stacks in the sense of \cite{lafforgue} are toric stacks that have a dense open point (i.e.~toric stacks for which $G=T_0$).
  \item A toric Artin stack in the sense of \cite{can} is a smooth non-strict toric stack which has finite generic stabilizer and which has a toric variety of the same dimension as a good moduli space. See Sections \ref{sec:fantastacks} and \ref{sec:gms-morphisms}.
  \item Toric stacks in the sense of \cite{tyomkin} are toric stacks as well. This follows from the main theorem of \cite{toricartin2}, stated below. See \cite[Remark 6.2]{toricartin2} for more details.
 \end{itemize}

Just as toric varieties can be understood in terms of fans, toric stacks can be understood in terms of combinatorial objects called stacky fans. This paper develops a rich dictionary between the combinatorics of stacky fans and the geometry of toric stacks. In contrast, \cite{toricartin2} focuses on how to show a given stack is toric (and so amenable to the combinatorial anlaysis of stacky fans). A classical result (see for example \cite[Corollary 3.1.8]{cls}) shows that if $X$ is a finite type \emph{scheme} with a dense open torus $T$ whose action on itself extends to $X$, then $X$ is a toric variety if and only if it is normal and separated. Analogously, the main result of \cite{toricartin2} is:

\begin{theorem*}[{\cite[Theorem 6.1]{toricartin2}}]
 Let $\X$ be an Artin stack of finite type over an algebraically closed field $k$ of characteristic $0$. Suppose $\X$ has an action of a torus $T$ and a dense open substack which is $T$-equivariantly isomorphic to $T$. Then $\X$ is a toric stack if and only if the following conditions hold:
 \begin{enumerate}
  \item $\X$ is normal,
  \item $\X$ has affine diagonal,
  \item geometric points of $\X$ have linearly reductive stabilizers, and
  \item \label{intro:global-type} every point of $[\X/T]$ is in the image of an \'etale representable map from a stack of the form $[U/G]$, where $U$ is quasi-affine and $G$ is an affine group.\footnote{A forthcoming result of Alper, Hall, and Rydh shows that this condition is superfluous. See \cite[Remark 4.4(0)]{toricartin2}.}
 \end{enumerate}
\end{theorem*}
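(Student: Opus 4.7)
The forward implication is direct. If $\X = [X/G]$ with $X$ a normal toric variety and $G$ a subgroup of its torus $T_0$, then normality descends from $X$; the diagonal is $[G \times X / G] \to [X/G] \times [X/G]$, which is affine since $G$ is; geometric stabilizers are closed subgroup schemes of the diagonalizable group $G$, hence linearly reductive in characteristic $0$; and the $T_0$-invariant affine open cover of $X$ descends to a cover of $\X$ by charts $[\spec R / G]$, providing the quasi-affine \'etale local structure required by (4).

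For the converse, my plan is to produce a torus $T_0$ surjecting onto $T$, a subgroup $G = \ker(T_0 \to T) \subseteq T_0$, and a normal toric variety $X_0$ with torus $T_0$, so that $\X \cong [X_0/G]$ $T$-equivariantly. I would first invoke an \'etale local structure theorem for stacks with linearly reductive stabilizers and affine diagonal (in the spirit of Alper--Hall--Rydh) which, combined with conditions (2), (3), (4), yields an \'etale cover of $\X$ by affine quotient stacks $[\spec A_x / G_x]$ around each closed point $x$, with $G_x$ the (linearly reductive) stabilizer. Averaging over the reductive group $T$, available in characteristic $0$, one can arrange these presentations to be $T$-equivariant.

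Second, I would use the $T$-action on each chart $\spec A_x$ together with the dense open $T \subseteq \X$ to equip $\spec A_x$ with a torus action having a dense orbit. Normality ensures $\spec A_x$ is a normal affine variety, and decomposing $A_x$ into weight spaces for the combined $T$- and $G_x$-actions exhibits an enlarged torus $T_0^{(x)}$ acting on $\spec A_x$ with a dense orbit, identifying $G_x$ as a subgroup of $T_0^{(x)}$ whose quotient is $T$. Hence each $\spec A_x$ is an affine toric variety with $G_x$ sitting inside its torus. Finally, I would extract a fan $\Sigma$ from the $T$-orbit stratification of $\X$ (cones $\leftrightarrow$ $T$-invariant irreducible closed substacks) and glue the local affine toric charts into a global normal toric variety $X_0$, using uniqueness up to unique equivariant isomorphism of an affine toric variety with prescribed dense torus. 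Because the local groups $T_0^{(x)}$ all restrict to $T$ on the dense open substack, they assemble into a single $T_0$, and the subgroups $G_x$ likewise embed compatibly into a single $G \subseteq T_0$, giving the desired $\X \cong [X_0/G]$.

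The main obstacle will be the second stage: promoting the quasi-affine, linearly reductive \'etale local description in (4) to an honest affine toric presentation in which the stabilizer embeds into the ambient torus. This rests on a careful weight-space analysis of the coordinate rings $A_x$, using density of the $T$-orbit together with normality to identify the weight monoid as that of an affine toric variety. A secondary technical difficulty is preserving $T$-equivariance throughout the local structure theorem and the subsequent gluing so that the local tori $T_0^{(x)}$ and stabilizers $G_x$ assemble into globally coherent data $(T_0, G)$; this is precisely where hypotheses (1)--(4) do their combined work.
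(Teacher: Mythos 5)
First, note that this paper does not actually prove the statement: it is quoted from the sequel \cite[Theorem 6.1]{toricartin2}, and the present paper only supplies some of the tools (canonical stacks, the moduli interpretation, good moduli space morphisms) used there. So there is no in-paper proof to compare against step by step; what can be compared is your outline versus the strategy the authors explicitly describe (in Section \ref{sec:intro} and at the end of Section \ref{sec:canonical-stacks}): show $\X$ is \emph{locally} a toric stack, form the \emph{canonical stack} over $\X$ in the sense of Definition \ref{def:general-canonical-stack}, prove that this canonical stack is \emph{globally} toric (it is smooth, so the moduli interpretation of Theorem \ref{thm:moduli} applies), and then descend toricness along the canonical stack morphism, which is a good moduli space morphism. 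Your forward implication is essentially correct and matches what one would write.

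Your converse, however, has a genuine gap at the final gluing step, and it is precisely the gap the canonical-stack detour is designed to avoid. You assert that the local presentations $[\spec A_x/G_x]$ of open substacks $\U_x\subseteq\X$, with their enlarged tori $T_0^{(x)}$, ``assemble into a single $T_0$'' and that the charts glue into a global toric variety $X_0$ with $\X\cong[X_0/G]$. But a presentation of a toric stack as a quotient is highly non-unique (Warning \ref{warn:non-canonical-presentations} and Appendix B), and even after the groups $G_x$ are identified with a common $G$, the local $G$-torsors $\spec A_x\to\U_x$ need not agree on overlaps: gluing them into a single $G$-torsor $X_0\to\X$ is a descent problem for torsors with a nontrivial obstruction, and ``uniqueness of an affine toric variety with prescribed dense torus'' does not address it, because the tori $T_0^{(x)}$ are themselves part of the data being glued (only their common quotient $T$ is given globally). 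The authors sidestep this by (i) replacing $\X$ by its canonical stack, which is pinned down by a universal property (Proposition \ref{prop:canonical-stack}) and therefore glues canonically from the local charts, and (ii) presenting the resulting smooth stack globally via the moduli interpretation (Remark \ref{rmk:moduli-natural-class}), i.e.\ by exhibiting the torus-invariant divisors with their linear and intersection relations globally, rather than by patching local quotient presentations; toricness of $\X$ itself is then recovered by descent along the good moduli space morphism from the canonical stack. If you want to pursue your direct route, you must either prove that the local torsors glue --- which amounts to reproving the universal property of the canonical stack in disguise --- or restructure the argument around a globally defined cover. The ``averaging over $T$'' step is also left vague, but it is the gluing that is the real missing idea.
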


\subsection*{Organization of This Paper}

In Section \ref{sec:definitions}, we define \emph{(non-strict) stacky fans}.  Once the definitions are in place, it will be clear that any morphism of stacky fans induces a toric morphism of the corresponding toric stacks.  In Section \ref{sec:morphisms}, we prove that the converse is true as well: any toric morphism of toric stacks is induced by a morphism of stacky fans (Theorem \ref{thm:morphisms-come-from-fans}).

Sections \ref{sec:definitions} and \ref{sec:morphisms} provide a sufficient base to generate interesting examples. In Section \ref{sec:fantastacks}, we highlight a particularly easy to handle class of toric stacks, which we call \emph{fantastacks}. Though non-strict toric stacks are considerably more general than fantastacks, it is sometimes easiest to understand a non-strict toric stack in terms of its relation to some fantastack. The stacks defined in \cite{bcs} and \cite{can} which have no generic stabilizer are fantastacks; those with non-trivial generic stabilizer are closed substacks of fantastacks.

Section \ref{sec:canonical-stacks} is devoted to the construction of the \emph{canonical stack} over a toric stack. The main result is Proposition \ref{prop:canonical-stack}, which justifies the terminology by showing that canonical stacks have a universal property. Canonical stacks are minimal ``stacky resolutions'' of singularities. Heuristically, the existence of such a resolution is desirable because it is sometimes possible to prove theorems on the smooth resolution and then descend them to the singular base. Indeed, the main theorem of Toric Stacks II, \cite[Theorem 6.1]{toricartin2}, is proved in this way.

In Section \ref{sec:gms-morphisms}, we prove Theorem \ref{thm:main-gms}, a combinatorial characterization of toric morphisms which are \emph{good moduli space morphisms} in the sense of \cite{Alper:good}. Good moduli space morphisms generalize the notion of a coarse moduli space and that of a good quotient in the sense of \cite{git}. Good moduli space morphisms are of central interest in the theory of moduli, so it is useful to have tools for easily identifying and handling many examples.

In Section \ref{sec:moduli}, we prove a \emph{moduli interpretation} for smooth toric stacks (Theorem \ref{thm:moduli}). That is, we characterize morphisms to smooth toric stacks from an arbitrary source, rather than only toric morphisms from toric stacks. The familiar moduli interpretation of $\PP^n$ is that specifying a morphism to $\PP^n$ is equivalent to specifying a line bundle, together with $n+1$ sections that generate it. Cox generalized this interpretation to smooth toric varieties in \cite{cox:smooth}, and Perroni further generalized it to smooth toric Deligne-Mumford stacks in \cite{perroni}. Smooth toric stacks are the natural closure of this class of moduli problems. In other words, any moduli problem of the same sort as described by Cox and Perroni is represented by a smooth toric stack (see Remark \ref{rmk:moduli-natural-class}).

\begin{remark}
 Just as toric varieties can be defined over an arbitrary base, much of this paper can be done over an arbitrary base, but we work over an algebraically closed field in order to avoid imposing confusing hypotheses (e.g.~every subgroup of a torus we consider is required to be diagonalizable). A stacky fan defines a toric stack over an arbitrary base, and morphisms of stacky fans induce morphisms of toric stacks. However, if the base is disconnected, not every toric morphism is induced by a morphism of stacky fans.
\end{remark}

\begin{remark}[The Log Geometric Approach]
 There is yet another approach to toric geometry, namely that of log geometry. In this paper, we do not develop this approach to toric stacks. We refer the interested reader to \cite[\S\S 5--6]{can}, in which the log geometric approach is taken for fantastacks.
\end{remark}

\subsection*{Logical Dependence of Sections}

The logical dependence of sections is roughly as follows:
\[\xymatrix@!0 @C+1.5pc @R+1pc{
   & {\ref{sec:definitions}}\ar[dd]\ar[dl]\ar[dr] \POS p+(0,.7) *\txt{Toric Stacks I \cite{toricartin1}} & & & & {3}\ar[d] \POS p+(-.7,.7) *\txt{Toric Stacks II \cite{toricartin2}}\\
 {\ref{sec:ses-of-G_betas}}\ar[dd]\ar[dr] & & {\ref{sec:moduli}} & & & {4}\ar[d]\\
 & {\ref{sec:morphisms}}\ar[dl]\ar[d]\ar[dr]\ar[r]\ar[r] & {\ref{sec:non-uniqueness-of-fans}}\ar[rr] & & {2} \ar[dr] & {5} \ar[d]
 \\
 {\ref{sec:gms-morphisms}} & {\ref{sec:fantastacks}} &  {\ref{sec:canonical-stacks}}\ar[rrr] & & & {6}
}\]

\subsection*{Acknowledgments}
We thank Jesse Kass and Martin Olsson for conversations which helped get this project started, and Vera Serganova and the MathOverflow community (especially Torsten Ekedahl, Jim Humphreys, Peter McNamara, David Speyer, and Angelo Vistoli) for their help with several technical points. We also thank Smiley for helping to track down many references. Finally, we would like to thank the anonymous referee for helpful suggestions and interesting questions.

\section{Definitions}\label{sec:definitions}

For a brief introduction to algebraic stacks, we refer the reader to \cite[Chapter 1]{olsson:abelian}. For a more detailed treatment, we refer to \cite{vistoli:fibered} or \cite{lmb}. If the reader is unfamiliar with stacks, we encourage her to continue reading, simply treating $[X/G]$ as a formal quotient of a scheme $X$ by an action of a group $G$. Just as it is possible to learn the theory of toric varieties as a means of learning about varieties in general, we hope the theory of toric stacks can serve as an introduction to the theory of algebraic stacks.

We refer the reader to \cite[Chapter 1]{fulton} or \cite[Chapter 3]{cls} for the standard correspondence between fans on lattices and toric varieties, and for basic results about toric varieties. We will follow the notation in \cite{cls} whenever possible.

\begin{definition}\label{def:T_L}
 If $L$ is a lattice (i.e.~a finitely generated free abelian group), we denote by $T_L$ the torus $D(L^*)=\hom_\gp(\hom_\gp(L,\ZZ),\GG_m)$ whose lattice of 1-parameter subgroups is naturally isomorphic to $L$.
\end{definition}
\begin{remark}\label{rmk:Cartier-dual}
 Here, the functor $D(-)$ is the \emph{Cartier dual} $\hom_\gp(-,\GG_m)$. It is an anti-equivalence of categories between finitely generated abelian groups and diagonalizable group schemes. See \cite[Expos\'e VIII]{sga3}.

 The functor $(-)^*$ is the usual dual for finitely generated abelian groups, $\hom_\gp(-,\ZZ)$.
\end{remark}

\begin{remark}[$\cok\beta$ finite $\Longleftrightarrow \beta^*$ injective]\label{rmk:cok-finite-iff-dual-injective}
  We will often use the fact that a morphism $\beta\colon L\to N$ of \emph{arbitrary} finitely generated abelian groups has finite cokernel if and only if $\beta^*\colon N^*\to L^*$ is injective. This is immediate from the fact that $\hom(-,\ZZ)$ is left exact, that $L\to N\to \cok \beta\to 0$ is exact, and that $\hom(\cok\beta,\ZZ)=0$ if and only if $\cok\beta$ is finite. Note also that this property depends only on the quasi-isomorphism class of the mapping cone $C(\beta) = [L\to N]$.
\end{remark}

\subsection{The Toric Stack of a Stacky Fan}
\label{subsec:toric-stack-of-stacky-fan}

\begin{definition}\label{def:stacky-fan}
 A \emph{stacky fan} is a pair $(\Sigma,\beta)$, where $\Sigma$ is a fan on a lattice $L$ and $\beta\colon L\to N$ is a homomorphism to a lattice $N$ so that $\cok\beta$ is finite.
\end{definition}

A stacky fan $(\Sigma,\beta)$ gives rise to a toric stack as follows. Let $X_\Sigma$ be the toric variety associated to $\Sigma$ (see \cite[\S 1.4]{fulton} or \cite[\S 3.1]{cls}). The map $\beta^*\colon N^*\to L^*$ induces a homomorphism of tori $T_\beta\colon T_L\to T_N$, naturally identifying $\beta$ with the induced map on lattices of 1-parameter subgroups. Since $\cok\beta$ is finite, $\beta^*$ is injective, so $T_\beta$ is surjective. Let $G_\beta=\ker(T_\beta)$. Note that $T_L$ is the torus of $X_\Sigma$, and $G_\beta\subseteq T_L$ is a subgroup.

\begin{definition}\label{def:fan->toric-stack}
 Using the notation in the above paragraph, if $(\Sigma,\beta)$ is a stacky fan, we define the toric stack $\X_{\Sigma,\beta}$ to be $[X_\Sigma/G_\beta]$, with the torus $T_N=T_L/G_\beta$.
\end{definition}

Conversely, every toric stack arises from a stacky fan, since every toric stack is of the form $[X/G]$, where $X$ is a toric variety and $G\subseteq T_0$ is a subgroup of its torus. Associated to $X$ is a fan $\Sigma$ on the lattice $L=\hom_\gp(\GG_m,T_0)$ of 1-parameter subgroups of $T_0$. The surjection of tori $T_0\to T_0/G$ induces a homomorphism of lattices of 1-parameter subgroups, $\beta\colon L\to N:=\hom_\gp(\GG_m,T_0/G)$. The dual homomorphism $\beta^*\colon N^*\to L^*$ is the induced homomorphism of characters. Since $T_0\to T_0/G$ is surjective, $\beta^*$ is injective, so $\cok\beta$ is finite. Thus $(\Sigma,\beta)$ is a stacky fan. It is straightforward to check that $[X/G]=\X_{\Sigma,\beta}$.

\begin{center}
\begin{tabular}{@{\extracolsep{1ex}}c|cccccc}
 Example & \ref{eg:toric-varieties} & \ref{eg:A_1} & \ref{eg:A^2/G_m} & \ref{eg:non-canonical-presentations} & \ref{eg:double-origin} & \ref{eg:A^1/mu_2}\\ \hline
 \xymatrix{L\ar[d]^\beta \POS p+(0,1.2) *+{\Sigma} \\ N} &
 \xymatrix{N\ar[d]^\id \POS p+(0,1.2) *+{\Sigma} \\ N} &
 \xymatrix{\ZZ^2\ar[d]^{\smat{1&0\\ 1&2}} \POS p+(0,1.2) *+{
   \begin{tikzpicture}
      \clip (-.1,-.1) rectangle (1,1);
     \filldraw[fill=lightgray]
      (0,0) -- (9,0) -- (9,9) -- (0,9) -- cycle;
    \draw[thick,<->] (0,1) -- (0,0) -- (1,0);
   \end{tikzpicture}} \\ \ZZ^2} &
 \xymatrix{\ZZ^2\ar[d]^{\smat{1&0}} \POS p+(0,1.2) *+{
   \begin{tikzpicture}
      \clip (-.1,-.1) rectangle (1,1);
     \filldraw[fill=lightgray]
      (0,0) -- (9,0) -- (9,9) -- (0,9) -- cycle;
    \draw[thick,<->] (0,1) -- (0,0) -- (1,0);
   \end{tikzpicture}} \\ \ZZ} &
 \xymatrix{\ZZ^2\ar[d]^{\smat{1&\ng}} \POS p+(0,1.2) *+{
   \tikz
    \draw[thick,<->] (0,1) -- (0,0) -- (1,0);
   } \\ \ZZ} &
 \xymatrix{\ZZ^2\ar[d]^{\smat{1&1}} \POS p+(0,1.2) *+{
   \tikz
    \draw[thick,<->] (0,1) -- (0,0) -- (1,0);
   } \\ \ZZ} &
 \xymatrix{\ZZ\ar[d]^{2} \POS p+(0,0.7) *+{
   \tikz
    \draw[thick,->] (0,0) -- (1,0);
   } \\ \ZZ}
\end{tabular}
\end{center}

\begin{example}[Toric Varieties]\label{eg:toric-varieties}
 Suppose $\Sigma$ is a fan on a lattice $N$. Letting $L=N$ and $\beta=\id_N$, we see that the induced map $T_N\to T_N$ is the identity map, so $G_\beta$ is trivial. So $\X_{\Sigma,\beta}$ is the toric variety $X_\Sigma$.
\end{example}

\begin{example}\label{eg:A_1}
 Here $X_\Sigma=\AA^2$. We have that $\beta^*$ is given by $\smat{1&1\\ 0&2}\colon \ZZ^2\to \ZZ^2$, so the induced map on tori $\GG_m^2\to \GG_m^2$ is given by $(s,t)\mapsto (st,t^2)$. The kernel is $G_\beta = \mu_2=\{(\zeta,\zeta)|\zeta^2=1\}\subseteq \GG_m^2$.

 \begin{window}[0,l,%
   {\begin{tikzpicture}[scale=.65]
     \clip (-.1,-.1) rectangle (1.5,2.1);
     \filldraw[fill=lightgray]
      (0,0) -- (9,0) -- (4,8) -- cycle;
     \draw[help lines] (-1,-1) grid (3,3);
     \draw[thick] (4,8) -- (0,0) -- (8,0);
 \end{tikzpicture}%
 },]
 So we see that $\X_{\Sigma,\beta}=[\AA^2/\mu_2]$, where the action of $\mu_2$ is given by $\zeta\cdot (x,y)=(\zeta x,\zeta y)$. Note that this is a smooth stack. It is distinct from the singular toric variety with the fan shown to the left.\qedhere
 \end{window}
\end{example}

Since quotients of subschemes of $\AA^n$ by subgroups of $\GG_m^n$ appear frequently, we often include the weights of the action in the notation.
\begin{notation}\label{not:weights}
 Let $G\hookrightarrow \GG_m^n$ be the subgroup corresponding to the surjection $\ZZ^n\to D(G)$. Let $g_i$ be the image of $e_i$ in $D(G)$. Let $X\subseteq \AA^n$ be a $\GG_m^n$-invariant subscheme. We denote the quotient $[X/G]$ by $[X/_{\smat{g_1&\cdots&g_n}}G]$.
\end{notation}
In this notation, the stack in Example \ref{eg:A_1} would be denoted $[\AA^2/_{\smat{1&1}}\mu_2]$.

\begin{example}\label{eg:A^2/G_m}
 Again we have that $X_\Sigma=\AA^2$. This time $\beta^*=\smat{1\\0}\colon \ZZ\to \ZZ^2$, which induces the homomorphism $\GG_m^2\to \GG_m$ given by $(s,t)\mapsto s$. Therefore, $G_\beta = \GG_m = \{(1,t)\}\subseteq \GG_m^2$, so $\X_{\Sigma,\beta} = [\AA^2/_{\smat{0&1}}\GG_m]\cong \AA^1\times [\AA^1/\GG_m]$.
\end{example}

\begin{example}\label{eg:non-canonical-presentations}
 This time $X_\Sigma=\AA^2\setminus \{(0,0)\}$. We see that $\beta^*=\smat{1\\ \ng}\colon \ZZ\to \ZZ^2$, which induces the morphism $\GG_m^2\to \GG_m$ given by $(s,t)\mapsto st^{-1}$. So $G_\beta = \GG_m=\{(t,t)\}\subseteq \GG_m^2$. We then have that $\X_{\Sigma,\beta}=[(\AA^2\setminus \{(0,0)\})/_{\smat{1&1}}\GG_m]=\PP^1$.
\end{example}

\begin{warning}\label{warn:non-canonical-presentations}
 Examples \ref{eg:toric-varieties} and \ref{eg:non-canonical-presentations} show that non-isomorphic stacky fans (see Definition \ref{def:morphism-of-stacky-fans}) can give rise to isomorphic toric stacks. The two presentations $[(\AA^2\setminus\{(0,0)\})/_{\smat{1&1}}\GG_m]$ and $[\PP^1/\{e\}]$ of the same toric stack are produced by different stacky fans. In Theorem \ref{thm:isomorphisms}, we determine when different stacky fans give rise to the same toric stack.
\end{warning}

\begin{example}[The non-separated line]\label{eg:double-origin}
 Again we have that $X_\Sigma=\AA^2\setminus \{(0,0)\}$. However, this time we see that $\beta^*=\smat{1\\ 1}\colon \ZZ\to \ZZ^2$, which induces the homomorphism $\GG_m^2\to \GG_m$ given by $(s,t)\mapsto st$. Therefore, $G_\beta = \GG_m=\{(t,t^{-1})\}\subseteq \GG_m^2$. So we have that $\X_{\Sigma,\beta}=[(\AA^2\setminus \{(0,0)\})/_{\smat{1&\ng}}\GG_m]$ is the affine line $\AA^1$ with a doubled origin.

 This example shows that there are toric stacks which are schemes, but are not toric varieties because they are non-separated.
\end{example}

\begin{example}\label{eg:A^1/mu_2}
 Here $X_\Sigma=\AA^1$, and $\beta^*=2\colon \ZZ\to \ZZ$, which induces the map $\GG_m\to \GG_m$ given by $t\mapsto t^2$. So $G_\beta = \mu_2\subseteq \GG_m$ and $\X_{\Sigma,\beta} = [\AA^1/\mu_2]$.
\end{example}

\subsection{Non-strict Stacky Fans and Non-strict Toric Stacks}

In this subsection, we generalize Definitions \ref{def:stacky-fan} and \ref{def:fan->toric-stack} to allow for stacks with non-trivial generic stabilizer.

\begin{definition}\label{def:G_beta}
 Suppose $f\colon A\to B$ is a homomorphism of finitely generated abelian groups so that $\ker f$ is free. For $i=0,1$, let $D(G_f^i)$ be $H^i(C(f)^*)$, where $C(f)=[A\xrightarrow{f}B]$ is the mapping cone of $f$ and $(-)^*$ is the derived functor $R\hom_\gp(-,\ZZ)$. We define $G_f^i$ to be the diagonalizable groups corresponding to $D(G_f^{i})$, and we define $G_f = G_f^0\oplus G_f^1$.
\end{definition}
Note that the homomorphism $A^*\to D(G_f^{1})$ induces a homomorphism $G_f\to D(A^{*})$ (that is trivial on $G_f^0$). In the case where $A$ and $B$ are lattices, $H^i(C(f)^*)$ are simply the kernel ($i=0$) and cokernel ($i=1$) of $f^*$. If we additionally assume $f$ has finite cokernel (as was the case in Section \ref{subsec:toric-stack-of-stacky-fan}), then $f^*$ has no kernel, so $G^0_f$ is trivial. In particular, the notation is consistent with the notation in the paragraph above Definition \ref{def:fan->toric-stack}, with $f=\beta$.

We now generalize Definitions \ref{def:stacky-fan} and \ref{def:fan->toric-stack}.

\begin{definition}\label{def:non-strict-fan}
 A \emph{non-strict stacky fan} is a pair $(\Sigma,\beta)$, where $\Sigma$ is a fan on a lattice $L$, and $\beta\colon L\to N$ is a homomorphism to a finitely generated abelian group.
\end{definition}

\begin{definition}\label{def:fan->non-strict}
 If $(\Sigma,\beta)$ is a non-strict stacky fan, we define $\X_{\Sigma,\beta}$ to be $[X_\Sigma/G_\beta]$, where the action of $G_\beta$ on $X_\Sigma$ is induced by the homomorphism $G_\beta\to D(L^*)=T_L$.
\end{definition}
\begin{remark}\label{rmk:compatibility-with-other-toric-stacks}
 If $\beta$ is assumed to have finite cokernel, this construction of $\X_{\Sigma,\beta}$ essentially agrees with the ones in \cite[\S 3]{bcs} and \cite[\S 5]{can}. However, those constructions effectively impose additional conditions on $\Sigma$ (e.g.~that $\Sigma$ is a subfan of the fan of $\AA^n$) since it is required to be induced by a fan on $N\otimes_\ZZ \QQ$.
\end{remark}

\begin{remark}\label{rmk:non-strict-is-closed-substack}
 We now give a more explicit description of $\X_{\Sigma,\beta}$, which also has the benefit of demonstrating that it is a non-strict toric stack according to Definition \ref{def:toric-stack}. See Example \ref{eg:non-strict-as-closed-substack} for an illustration of this approach.

 Let $(\Sigma,\beta\colon L\to N)$ be a non-strict stacky fan. Let
 \[
 \ZZ^s\xrightarrow Q \ZZ^r\to N\to 0
 \]
 be a presentation of $N$, and let $B\colon L\to \ZZ^r$ be a lift of $\beta$.

 Define the fan $\Sigma'$ on $L\oplus \ZZ^s$ as follows. Let $\tau$ be the cone generated by $e_1,\dots, e_s\in \ZZ^s$. For each $\sigma\in \Sigma$, let $\sigma'$ be the cone spanned by $\sigma$ and $\tau$ in $L\oplus \ZZ^s$. Let $\Sigma'$ be the fan generated by all the $\sigma'$. Corresponding to the cone $\tau$, we have the closed subvariety $Y\subseteq X_{\Sigma'}$, which isomorphic to $X_\Sigma$ since $\Sigma$ is the \emph{star} (sometimes called the \emph{link}) of $\tau$ \cite[Proposition 3.2.7]{cls}. We define

 \[\xymatrix@R-2pc @C-1pc{
 \llap{$\beta'=B\oplus Q\colon\,$}L\oplus \ZZ^s\ar[r] & \ZZ^r\\
 (l,\a)\ar@{|->}[r] & B(l)+Q(\a).
 }\]

 Then $(\Sigma',\beta')$ is a stacky fan and we see that $\X_{\Sigma,\beta}\cong [Y/G_{\beta'}]$. Note that $C(\beta')$ is quasi-isomorphic to $C(\beta)$, so $G_{\beta'}\cong G_\beta$.
\end{remark}

\begin{remark}\label{rmk:smooth-gen-stacky<smooth}
 Note that if $\sigma$ is a smooth cone,\footnote{A \emph{smooth cone} is a cone whose corresponding toric variety is smooth. See \cite[Definition 1.2.16]{cls}.} then the cone spanned by $\sigma$ and $\tau$ is also a smooth cone. So if $\X_{\Sigma,\beta}$ is a \emph{smooth} non-strict toric stack, then it is a closed substack of a \emph{smooth} toric stack.
\end{remark}


The following definition will often be useful when dealing with stacky fans.
\begin{definition}\label{def:saturated}
 Suppose $B$ is a finitely generated abelian group and $A\subseteq B$ is a subgroup. The \emph{saturation of $A$ in $B$} is the subgroup
 \[
  \sat_BA = \{b\in B|n\cdot b\in A\text{ for some }n\in \ZZ_{>0}\}.
 \]
 We say $A$ is \emph{saturated in $B$} if $A=\sat_BA$. We say that a homomorphism $f\colon A\to B$ is \emph{saturated} if $f(A)$ is saturated in $B$.
\end{definition}

\begin{remark}
 Saturated morphisms are precisely morphisms whose cokernels are lattices. In particular, the image of a saturated morphism has a direct complement.
\end{remark}

\begin{remark}[On the condition ``$\cok\beta$ is finite'']\label{rmk:beta-finite-cokernel}
 Since the action of $G_\beta^0$ on $X_\Sigma$ is trivial, we have that $\X_{\Sigma,\beta}=[X_\Sigma/G_\beta^1]\times BG_\beta^0$. It is often easiest to treat the factor of $BG_\beta^0$ separately. Let $N_1=\sat_N(\beta(L))$, let $N_0$ be a direct complement to $N_1$, and let $\beta_1\colon L\to N_1$ be the factorization of $\beta$ through $N_1$. Then $G_\beta^0 = D(N_0^*) = \GG_m^{\mathrm{rk}(N_0)}$ and $[X_\Sigma/G_\beta^1] = \X_{\Sigma,\beta_1}$. We therefore typically assume $\beta$ has finite cokernel (equivalently, that $N_0=0$ or that $\X_{\Sigma,\beta}$ has finite generic stabilizer), with the understanding that the general case can usually be handled by replacing $\beta$ by $\beta_1$.
\end{remark}

\begin{remark}[On the non-strict case]\label{rmk:non-strict-case}
 In this paper, we work primarily with toric stacks, since non-strict toric stacks can be described as closed substacks. One reason for this focus is that we would like to avoid discussing actions of stacky tori in general. We refer the interested reader to \cite[\S 1.7, \S 2, and Appendix B]{fmn} for a discussion on stacky tori and their actions.
\end{remark}

\section{Morphisms of Toric Stacks}\label{sec:morphisms}

The main goal of this section is to define morphisms of toric stacks and stacky fans, and to show (in Theorem \ref{thm:morphisms-come-from-fans}) that every morphism of toric stacks is induced by a morphism of stacky fans.

\begin{definition}\label{def:toric-morphism}
 A \emph{toric morphism} or a \emph{morphism of {\rm(}non-strict{\rm)} toric stacks} is a morphism which restricts to a homomorphism of (stacky) tori and is equivariant with respect to that homomorphism.
\end{definition}

\begin{definition}\label{def:morphism-of-stacky-fans}
 A \emph{morphism of non-strict stacky fans} $(\Sigma,\beta\colon L\to N)\to (\Sigma',\beta'\colon L'\to N')$ is a pair of group morphisms $\Phi\colon L\to L'$ and $\phi\colon N\to N'$ so that $\beta'\circ \Phi=\phi\circ \beta$ and so that for every cone $\sigma\in \Sigma$, $\Phi(\sigma)$ is contained in a cone of $\Sigma'$.
\end{definition}
We typically draw a morphism of non-strict stacky fans as a commutative diagram:
\[\xymatrix{
 L\ar[d]_\beta \ar[r]^\Phi \POS p+(0,.7) *+{\Sigma}="s" & L'\ar[d]^{\beta'} \POS p+(0,.7) *+{\Sigma'} \ar@{<-} "s"\\
 N\ar[r]^\phi & N'
}\]

A morphism of non-strict stacky fans $(\Phi,\phi)\colon (\Sigma,\beta)\to (\Sigma',\beta')$ induces a morphism of toric varieties $X_\Sigma\to X_{\Sigma'}$ and a compatible morphism of groups $G_\beta\to G_{\beta'}$, so it induces a toric morphism of non-strict toric stacks $\X_{(\Phi,\phi)}\colon \X_{\Sigma,\beta}\to \X_{\Sigma',\beta'}$.

\subsection{Toric Morphisms are Induced by Morphisms of Stacky Fans}
\begin{lemma}\label{lem:torsor-connected-component}
 Let $X$ be a connected scheme, $G$ a group scheme over $k$, and $P\to X$ a $G$-torsor. Suppose $Q\subseteq P$ is a connected component of $P$. Then $Q\to X$ is an $H$-torsor, where $H$ is the subgroup of $G$ which sends $Q$ to itself.
\end{lemma}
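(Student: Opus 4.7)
The plan is to exploit the defining isomorphism of the torsor, $\alpha\colon G\times P\xrightarrow{\sim} P\times_X P$ given by $(g,p)\mapsto(g\cdot p,p)$, and restrict it to $Q$.

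First I would pin down $H$ as an open-and-closed subgroup scheme of $G$. Picking a $k$-point $q_0\in Q$, the preimage $m_{q_0}^{-1}(Q)$ under the orbit map $m_{q_0}\colon G\to P$, $g\mapsto g\cdot q_0$, is open and closed in $G$, since $Q$ is open and closed in $P$ (using that $P$ is locally of finite type). The key observation is that for every $g\in G$, the translate $g\cdot Q$ is itself a connected component of $P$, being the image of $Q$ under the automorphism $p\mapsto g\cdot p$; hence $g\cdot Q\cap Q\neq\emptyset$ forces $g\cdot Q=Q$. In particular $g\cdot q_0\in Q$ is equivalent to $g\cdot Q=Q$, matching the set-theoretic definition of $H$. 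The same disjoint-or-equal dichotomy shows $H$ contains the identity and is closed under multiplication and inverses, so $H$ is an open-and-closed subgroup scheme.

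Next, restricting $\alpha$ extracts the torsor structure on $Q$. The preimage $\alpha^{-1}(Q\times_X Q)$ consists of pairs $(g,p)$ with $p\in Q$ and $g\cdot p\in Q$, which by the previous paragraph coincides with $H\times Q$. Hence $\alpha$ restricts to an isomorphism $H\times Q\xrightarrow{\sim} Q\times_X Q$, which is exactly the compatibility condition needed for $Q\to X$ to be an $H$-torsor.

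The step most likely to need care is showing $Q\to X$ is faithfully flat. Flatness is immediate, as $Q$ is open in $P$ and $P\to X$ is flat. For surjectivity, the orbit $G\cdot Q=\bigcup_{g\in G}g\cdot Q$ is a union of connected components of $P$, so $\pi(G\cdot Q)$ and $\pi(P\setminus(G\cdot Q))$ are both open in $X$. They are disjoint because the torsor condition implies $G$ acts transitively on each fibre $P_x$, so $P_x\cap Q\neq\emptyset$ already forces $P_x\subseteq G\cdot Q$. Since $X$ is connected and $\pi(Q)\neq\emptyset$, we conclude $P=G\cdot Q$, which immediately yields that $Q$ meets every fibre of $\pi$ (as $G$ preserves fibres), completing the proof.
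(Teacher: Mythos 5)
Your proof is correct and follows essentially the same route as the paper's: both rest on the observation that a translate $g\cdot Q$ is a connected component of $P$, hence equal to or disjoint from $Q$, so that the action isomorphism $G\times P\xrightarrow{\sim}P\times_X P$ restricts to $H\times Q\xrightarrow{\sim}Q\times_X Q$. The only difference is that you additionally spell out that $H$ is an open-and-closed subgroup scheme and that $Q\to X$ is faithfully flat and surjective, points the paper leaves implicit.
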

\begin{proof}
 Let $\phi$ be an automorphism of $P$. Since $Q$ is a connected component of $P$, $\phi(Q)$ is either equal to $Q$ or is disjoint from $Q$. It follows that $(G\times Q)\times_P Q=(H\times Q)\times_P Q\cong H\times Q$, where the map $G\times Q\to P$ is induced by the action of $G$ on $P$.

 The diagonal $Q\to Q\times_X Q\subseteq P\times_X Q$ is a section of the $G$-torsor $P\times_X Q\to Q$, so it induces a $G$-equivariant isomorphism $P\times_X Q\cong G\times Q$. We then have the following cartesian diagram:
 \[\xymatrix{
  \llap{$H\times Q\cong\;$}(G\times Q)\times_P Q\ar[r]\ar[d] & Q\times_X Q\ar[d]\ar[r] & Q\ar[d]\\
  G\times Q \ar@{}[r]|{\mbox{$\cong$}} & P\times_X Q\ar[r]\ar[d] & P\ar[d]\\
  & Q\ar[r] & X
 }\]
 In particular, the map $H\times Q\to Q\times_X Q$, given by $(h,q)\mapsto (h\cdot q,q)$, is an isomorphism. This shows that $Q$ is an $H$-torsor.
\end{proof}

\begin{theorem}\label{thm:morphisms-come-from-fans}
 Let $(\Sigma,\beta\colon L\to N)$ and $(\Sigma',\beta'\colon L'\to N')$ be stacky fans, and suppose $f\colon \X_{\Sigma,\beta}\to \X_{\Sigma',\beta'}$ is a toric morphism. Then there exists a stacky fan $(\Sigma_0,\beta_0)$ and morphisms $(\Phi,\phi)\colon (\Sigma_0,\beta_0)\to (\Sigma,\beta)$ and $(\Phi',\phi')\colon (\Sigma_0,\beta_0)\to (\Sigma',\beta')$ such that the following triangle commutes and $\X_{(\Phi,\phi)}$ is an isomorphism:
 \[\xymatrix@R-1.7pc{
  \X_{\Sigma_0,\beta_0}\ar[dd]^\wr_{\X_{(\Phi,\phi)}}\ar[dr]^{\X_{(\Phi',\phi')}}\\
  & \X_{\Sigma',\beta'}\\
  \X_{\Sigma,\beta}\ar[ur]_f
 }\]
\end{theorem}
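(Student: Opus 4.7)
My plan is to construct $(\Sigma_0, \beta_0)$ by pulling back the presentation $X_{\Sigma'} \to \X_{\Sigma',\beta'}$ along $f$ and extracting a connected component. First, I extract from $f$ the lattice map $\phi\colon N \to N'$ arising from the induced homomorphism of open tori $T_N \to T_{N'}$. Setting $L_0 := L \times_{N'} L'$ (the pullback in abelian groups, taken via $\phi\beta$ and $\beta'$), I obtain a free abelian group with projections $\Phi\colon L_0 \to L$ and $\Phi'\colon L_0 \to L'$ satisfying $\beta' \circ \Phi' = \phi \circ \beta \circ \Phi$ by construction.

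Next, I form the scheme-theoretic pullback $P := X_\Sigma \times_{\X_{\Sigma',\beta'}} X_{\Sigma'}$; as the pullback of the $G_{\beta'}$-torsor $X_{\Sigma'} \to \X_{\Sigma',\beta'}$, it is a $G_{\beta'}$-torsor over $X_\Sigma$. Its restriction over $T_L \subseteq X_\Sigma$ is the fiber product $T_L \times_{T_{N'}} T_{L'}$, a diagonalizable group scheme whose identity component is canonically the torus $T_{L_0}$. Let $X_0 \subseteq P$ be the connected component containing the identity of $T_{L_0}$; this is a normal irreducible variety containing $T_{L_0}$ as a dense open torus whose self-action extends, so $X_0 = X_{\Sigma_0}$ for some fan $\Sigma_0$ on $L_0$. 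Setting $\beta_0 := \beta \circ \Phi$, I check that $\cok\Phi$ injects into $\cok\beta'$ (hence is finite), so $\cok\beta_0$ is finite and $(\Sigma_0, \beta_0)$ is a stacky fan. The pairs $(\Phi, \id_N)$ and $(\Phi', \phi)$ are morphisms of stacky fans: the algebraic compatibilities hold by construction, and $\Phi, \Phi'$ send cones of $\Sigma_0$ into cones of $\Sigma, \Sigma'$ because $X_0 \to X_\Sigma$ and $X_0 \to X_{\Sigma'}$ are toric morphisms inherited from $P \to X_\Sigma$ and $P \to X_{\Sigma'}$.

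The crux is showing $\X_{(\Phi,\id_N)}\colon \X_{\Sigma_0,\beta_0} \to \X_{\Sigma,\beta}$ is an isomorphism. By Lemma \ref{lem:torsor-connected-component}, $X_0 \to X_\Sigma$ is an $H$-torsor where $H \subseteq G_{\beta'}$ is the stabilizer of the component $X_0$. The key claim is $H = \ker T_\Phi$; granting this, and using that $T_\Phi$ is surjective (because $\cok\Phi$ is finite), one gets a short exact sequence $1 \to H \to G_{\beta_0} \to G_\beta \to 1$ with $G_{\beta_0} = T_\Phi^{-1}(G_\beta)$, yielding $[X_0/G_{\beta_0}] = [[X_0/H]/G_\beta] \cong [X_\Sigma/G_\beta] = \X_{\Sigma,\beta}$. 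Commutativity of the triangle with $f$ then follows tautologically from the universal property defining $P$.

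The main obstacle is the identification $H = \ker T_\Phi$. This reduces to understanding how $G_{\beta'}$, sitting in $T_{L'}$ and hence in $T_L \times T_{L'}$ via the second factor, acts on the component group of the diagonalizable group $T_L \times_{T_{N'}} T_{L'}$: one must show the subgroup of $G_{\beta'}$ preserving the identity component $T_{L_0}$ equals $G_{\beta'} \cap T_{L_0}$, which unwinds to $\ker T_\Phi$ via the defining pullback relations. Once this is settled, the remaining work consists of diagram chases among diagonalizable groups and straightforward fan-theoretic compatibility checks.
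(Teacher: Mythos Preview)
Your proposal is correct and follows essentially the same route as the paper: form the fiber product $P = X_\Sigma \times_{\X_{\Sigma',\beta'}} X_{\Sigma'}$, take the connected component containing the identity, recognize it as a toric variety $X_{\Sigma_0}$, apply Lemma~\ref{lem:torsor-connected-component} to identify the torsor group, and deduce the short exact sequence $1 \to H \to G_{\beta_0} \to G_\beta \to 1$ (the paper cites Lemma~\ref{lem:ses=>ses-of-G_betas} where you argue directly, and your explicit identification $L_0 = L \times_{N'} L'$ makes concrete what the paper leaves as ``the lattice of 1-parameter subgroups of $T_0$''). The step you flag as the main obstacle, $H = \ker T_\Phi$, is exactly what the paper dispatches with the phrase ``by construction'': since $G_{\beta'}$ acts on $T_L \times_{T_{N'}} T_{L'}$ by translation via $g \mapsto (1,g)$, the subgroup preserving the identity component is $\{g \in G_{\beta'} : (1,g) \in T_0\} = \ker(T_0 \to T_L)$, precisely as you sketch.
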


\begin{remark}
  If $\Sigma$ is the fan of faces of a single cone, we may take $(\Sigma_0,\beta_0)=(\Sigma,\beta)$ by Corollary \ref{cor:morphisms-from-pointed}. If a toric stack $\X$ has no torus factor, there is a canonical (initial) choice of stacky fan $(\Sigma,\beta)$ so that $\X\cong \X_{\Sigma,\beta}$ and any toric morphism from $\X$ is induced by a morphism from the fan $(\Sigma,\beta)$ (see \cite[Remark 2.14]{toricartin2}).
\end{remark}

\begin{proof}[Proof of Theorem \ref{thm:morphisms-come-from-fans}]
 By assumption, $f$ restricts to a homomorphism of tori $T_N \to T_{N'}$, which induces a homomorphism of lattices of 1-parameter subgroups $\phi\colon N\to N'$.

 We define $Y=X_\Sigma\times_{\X_{\Sigma',\beta'}}X_{\Sigma'}$. Since $X_\Sigma\to \X_{\Sigma',\beta'}$ and $X_{\Sigma'}\to \X_{\Sigma',\beta'}$ are toric, $T_L\times_{T_{N'}}T_{L'}$ is a diagonalizable group. The connected component of the identity, $T_0\subseteq T_L\times_{T_{N'}}T_{L'}$, is a connected diagonalizable group, so it is a torus. Let $Y_0$ be the connected component of $Y$ which contains $T_0$, and let $G_\Phi$ be the kernel of the homomorphism $T_0\to T_L$. We then have the following diagram:
 \[\xymatrix@!0 @C+1pc{
  G_\Phi\ari[dd]\ari[rr] && G_{\beta'}\ari[dd]\ar@{=}[rr] && G_{\beta'}\ari[dd]\\
  \\
  T_0\ari[rr]\ari[dr] && T_L\times_{T_{N'}}T_{L'} \ar@{->>}[dd]|{\hole}\ar[rr] \ari[dr] && T_{L'}\ari[dr]\ar@{->>}[dd]|{\hole}\\
  & Y_0\ari[rr] && Y\ar[dd]\ar[rr] && X_{\Sigma'}\ar[dd]^{G_{\beta'}\text{-torsor}}\\
  && T_L\ari[dr]\ar[rr]|{\hole} && T_{N'}\ari[dr]\\
  & && X_\Sigma\ar[rr] && \X_{\Sigma',\beta'}
 }\]
 Since $X_\Sigma$ is normal and separated, and $Y$ is a $G_{\beta'}$-torsor over $X_\Sigma$, we have that $Y$ is normal and separated, so $Y_0$ is normal, separated, and connected. In particular, $Y_0$ is irreducible. We have that $T_0$ is an open subscheme of $Y_0$, and $T_0$ acts on $Y_0$ in a way that extends the multiplication, so $Y_0$ is a toric variety with torus $T_0$. Say it corresponds to a fan $\Sigma_0$ on the lattice $L_0$ of 1-parameter subgroups of $T_0$.

 Now $Y_0\to X_\Sigma$ and $Y_0\to X_{\Sigma'}$ are morphisms of toric varieties, so they are induced by morphisms of fans $\Sigma_0\to \Sigma$ and $\Sigma_0\to \Sigma'$. Defining $\beta_0$ to be the composition $L_0\xrightarrow\Phi L\xrightarrow\beta N$, we have morphisms of stacky fans
 \[\xymatrix{
  L \ar[d]_\beta \POS p+(0,.7) *+{\Sigma}="s" & L_0 \ar[l]_{\Phi}\ar[r]\ar[d]_{\beta_0} \POS p+(0,.7) *+{\Sigma_0}="s0" \ar "s" & L'\ar[d]^{\beta'} \POS p+(0,.7) *+{\Sigma'} \ar@{<-} "s0"\\
  N\ar@{=}[r] & N\ar[r]^\phi & N'
 }\]
 Note that $G_\Phi$ is the kernel of the surjection $T_0\to T_L$, so the notation is consistent with Definition \ref{def:G_beta}. By construction, $G_\Phi$ is the subgroup of $G_{\beta'}$ which takes $T_0$ to itself, so it is the subgroup which takes $Y_0=\overline{T_0}$ to itself. By Lemma \ref{lem:torsor-connected-component}, $Y_0$ is a $G_\Phi$-torsor over $X_\Sigma$.

 Since $T_0$ is a connected component of a group that surjects onto $T_L$, the induced morphism $T_0\to T_L$ is surjective, so $\Phi$ has finite cokernel. By Lemma \ref{lem:ses=>ses-of-G_betas}, $G_{\beta_0}$ is an extension of $G_\beta$ by $G_\Phi$. The morphism of stacky fans $(\Sigma_0,\beta_0)\to (\Sigma,\beta)$ induces the isomorphism $\X_{\Sigma_0,\beta_0}=[Y/G_{\beta_0}]\to [(Y/G_\Phi)/G_\beta]= \X_{\Sigma,\beta}$.

 On the other hand, the morphism $(\Sigma_0,\beta_0)\to (\Sigma',\beta')$ induces the morphism $[Y_0/G_{\beta_0}]\cong [X_\Sigma/G_\beta]\to \X_{\Sigma',\beta'}$.
\end{proof}

We conclude this section with the following general purpose proposition for studying product morphisms.
\begin{proposition}\label{prop:product-of-property-P}
 Let $\mathbf{P}$ be a property of morphisms which is stable under composition and base change. For $i=0,1$, let $(\Phi_i,\phi_i)\colon (\Sigma_i,\beta_i\colon L_i\to N_i)\to (\Sigma_i,\beta_i\colon L_i\to N_i)$ be a morphism of non-strict stacky fans. Then the product morphism $(\Phi_0\times \Phi_1,\phi_0\times \phi_1)$ induces a morphism of non-strict toric stacks which has property $\mathbf{P}$ if and only if each $(\Phi_i,\phi_i)$ does:
 \[\xymatrix@C+1pc{
  L_0\times L_1\ar[d]_{\beta_0\times \beta_1} \ar[r]^{\Phi_0\times \Phi_1} \POS p+(0,.7) *+{\Sigma_0\times \Sigma_1}="s" & L_0'\times L_1'\ar[d]^{\beta_0'\times \beta_1'} \POS p+(0,.7) *+{\Sigma_0'\times \Sigma_1'} \ar@{<-} "s"\\
  N_0\times N_1\ar[r]^{\phi_0\times \phi_1} & N_0'\times N_1'
 }\]
 (see \cite[Proposition 3.1.14]{cls} for basic facts about product fans).
\end{proposition}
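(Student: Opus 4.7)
The central structural observation is that the construction $(\Sigma,\beta)\mapsto\X_{\Sigma,\beta}$ is compatible with products: namely
\[
\X_{\Sigma_0\times\Sigma_1,\,\beta_0\times\beta_1} \;\cong\; \X_{\Sigma_0,\beta_0}\times\X_{\Sigma_1,\beta_1},
\]
and under this identification the morphism induced by $(\Phi_0\times\Phi_1,\phi_0\times\phi_1)$ is the product $f_0\times f_1$ of the individually induced morphisms $f_i := \X_{(\Phi_i,\phi_i)}$. I would prove this via two inputs: (i) $X_{\Sigma_0\times\Sigma_1}\cong X_{\Sigma_0}\times X_{\Sigma_1}$ by \cite[Proposition 3.1.14]{cls}; and (ii) $G_{\beta_0\times\beta_1}\cong G_{\beta_0}\times G_{\beta_1}$ with compatible actions, which follows from the splitting $C(\beta_0\times\beta_1)\simeq C(\beta_0)\oplus C(\beta_1)$ of mapping cones, preserved by $R\hom_{\gp}(-,\ZZ)$ and then converted to a product of diagonalizable groups under Cartier duality.

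With this identification in hand, the ``if'' direction is immediate: one factors
\[
f_0\times f_1 \;=\; (f_0\times \id_{\Y_1})\circ(\id_{\X_0}\times f_1),
\]
where $\id_{\X_0}\times f_1$ is the base change of $f_1$ along the projection $\X_0\times\Y_1\to\Y_1$, and similarly $f_0\times\id_{\Y_1}$ is the base change of $f_0$ along $\Y_0\times\Y_1\to\Y_0$. Both factors therefore have $\mathbf{P}$ by stability under base change, and their composition $f_0\times f_1$ has $\mathbf{P}$ by stability under composition.

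For the ``only if'' direction, the plan is to recover each $f_i$ from $f_0\times f_1$ via base change, using that every toric stack has a distinguished $k$-point at the identity of its torus and that toric morphisms send identities to identities. Base-changing $f_0\times f_1$ along the closed immersion $(\id,1_{\Y_1})\colon\Y_0\hookrightarrow\Y_0\times\Y_1$ yields $f_0\circ\mathrm{pr}_{\X_0}\colon\X_0\times F_1\to\Y_0$, where $F_1 := f_1^{-1}(1_{\Y_1})$ contains $1_{\X_1}$; this morphism has $\mathbf{P}$. Then $f_0$ is extracted by a second base change that collapses $F_1$ at $1_{\X_1}$, and symmetrically for $f_1$. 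I expect the principal obstacle to be making this second step rigorous, since $F_1$ need not be a reduced point and so $f_0$ is not literally a single base change of $f_0\times f_1$; the resolution is to exhibit $f_0$ as an iterated base change by realizing the section at $1_{\X_1}$ as the pullback of the closed-point inclusion $\spec k\hookrightarrow F_1$ along the structure morphism $\X_0\to\spec k$, thereby keeping the entire construction inside the base-change/composition framework provided by the hypotheses on $\mathbf{P}$.
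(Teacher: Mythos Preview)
Your product decomposition and the ``if'' direction are exactly what the paper does. For the ``only if'' direction, the paper is more direct than you are: it base changes $f_0\times f_1$ along $\Y_1\to\Y_0\times\Y_1$ (via the identity $k$-point of $\Y_0$) and simply asserts that the result is $f_1$.

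The subtlety you flag is real---that base change is $f_0^{-1}(1_{\Y_0})\times\X_1\to\Y_1$, not literally $f_1$---but your proposed resolution does not close the gap. To extract $f_0$ from $g\colon\X_0\times F_1\to\Y_0$ you compose with the section $s\colon\X_0\to\X_0\times F_1$ at $1_{\X_1}$, and you correctly observe that $s$ is the base change of the point inclusion $\spec k\hookrightarrow F_1$. But stability under base change only transports $\mathbf{P}$ \emph{from} a morphism already known to have it; nothing in the hypotheses puts $\mathbf{P}$ on $\spec k\hookrightarrow F_1$, so you cannot conclude that $s$ (and hence $f_0=g\circ s$) has $\mathbf{P}$. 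Indeed, for a completely arbitrary $\mathbf{P}$ the converse can fail: take $\mathbf{P}$ to be ``every nonempty geometric fiber has at least two points'' (which is stable under composition and base change), let $f_0$ be the squaring map on $\GG_m$, and let $f_1=\id_{\GG_m}$; then $f_0\times f_1$ has $\mathbf{P}$ while $f_1$ does not. The paper's one-line argument elides the same step; the proposition should be read as holding for the specific properties (isomorphisms, good moduli space morphisms) for which it is actually invoked, and in those cases the converse is easy to check directly.
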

\begin{proof}
 Products of morphisms with property $\mathbf P$ have property $\mathbf P$ because $f\times g = (f\times \id)\circ (\id\times g)$, and $f\times \id$ (resp.~$\id\times g$) is a base change of $f$ (resp.~$g$).

 The constructions of $\X_{\Sigma,\beta}$ from $(\Sigma,\beta)$ and of $\X_{(\Phi,\phi)}$ from $(\Phi,\phi)$ commute with products, so $(\Phi_0\times \Phi_1,\phi_0\times \phi_1)$ induces the product morphism $\X_{(\Phi_0\times \Phi_1,\phi_0\times \phi_1)}=\X_{(\Phi_0,\phi_0)}\times \X_{(\Phi_1,\phi_1)}$. It follows that if $\X_{(\Phi_i,\phi_i)}$ have property $\mathbf P$, then so does $\X_{(\Phi_0\times \Phi_1,\phi_0\times \phi_1)}$.

 Conversely, suppose $\X_{(\Phi_0\times \Phi_1,\phi_0\times \phi_1)}=\X_{(\Phi_0,\phi_0)}\times \X_{(\Phi_1,\phi_1)}$ has property $\mathbf P$. We have that $\X_{\Sigma_0',\beta_0'}$ has a $k$-point, the identity element of its torus, which induces a morphism $\X_{\Sigma_1',\beta_1'}\to \X_{\Sigma_0',\beta_0'}\times \X_{\Sigma_1',\beta_1'}$. Base changing $\X_{(\Phi_0,\phi_0)}\times \X_{(\Phi_1,\phi_1)}$ by this morphism, we get $\X_{(\Phi_1,\phi_1)}$, so $\X_{(\Phi_1,\phi_1)}$ has property $\mathbf P$. Similarly, $\X_{(\Phi_0,\phi_0)}$ has property $\mathbf P$.
\end{proof}

\section{Fantastacks: Easy-to-Draw Examples}\label{sec:fantastacks}
In this section, we introduce a broad class of smooth toric stacks which are especially easy to handle because $N$ is a lattice and the fan on $L$ is induced by a fan on $N$.

\begin{definition}\label{def:fantastack}
 Let $\Sigma$ be a fan on a lattice $N$, and let $\beta\colon \ZZ^n\to N$ be a homomorphism with finite cokernel so that every ray of $\Sigma$ contains some $\beta(e_i)$ and every $\beta(e_i)$ lies in the support of $\Sigma$. For a cone $\sigma\in \Sigma$, let $\hat \sigma=\mathrm{cone}(\{e_i|\beta(e_i)\in \sigma\})$. We define the fan $\hhat\Sigma$ on $\ZZ^n$ as the fan generated by all the $\hat\sigma$. We define $\F_{\Sigma,\beta} = \X_{\hhat\Sigma,\beta}$. Any toric stack isomorphic to some $\F_{\Sigma,\beta}$ is called a \emph{fantastack}.
\end{definition}
\begin{remark}\label{rmk:computing-X_hatSigma}
 The cones of $\hhat\Sigma$ are indexed by sets $\{e_{i_1},\dots, e_{i_k}\}$ such that $\{\beta(e_{i_1}),\dots, \beta(e_{i_k})\}$ is contained in a single cone of $\Sigma$. It is therefore easy to identify which open subvariety of $\AA^n$ is represented by $\hhat \Sigma$. Explicitly, define the ideal
 \[
  J_\Sigma = \Bigl(\prod_{\beta(e_i)\not\in \sigma} x_i \Bigm| \sigma\in \Sigma\Bigr).
 \]
 Then $X_{\hhat\Sigma} = \AA^n\setminus V(J_\Sigma)$. Note, as in the Cox construction of a toric variety, that $J_\Sigma$ is generated by the monomials $\prod_{\beta(e_i)\not\in \sigma} x_i$, where $\sigma$ varies over \emph{maximal} cones of $\Sigma$.
\end{remark}
\begin{remark}\label{rmk:doing-fantastack-examples}
 Since $\beta$ is a homomorphism of lattices, $C(\beta)^*$ can be computed by simply dualizing $\beta$. Since $\beta$ is assumed to have finite cokernel, $G_\beta^0=0$, so $G_\beta = D(\cok \beta^*)$. If $f\colon \ZZ^n\to \cok \beta^*$ is the cokernel of $\beta^*$, with $g_i=f(e_i)$, then we have that $\F_{\Sigma,\beta} = \bigl[(\AA^n\setminus V(J_\Sigma))/_{\smat{g_1&\cdots&g_n}} D(\cok \beta^*)\bigr]$ using Notation \ref{not:weights}.
\end{remark}

\begin{remark}
 Fantastacks are precisely the toric Artin stacks in \cite{can} which have trivial generic stabilizer.
\end{remark}
\begin{remark}
 The fantastack $\F_{\Sigma,\beta}$ has the toric variety $X_\Sigma$ as its good moduli space, as we will show in Example \ref{ex:fantastack-gms} (this is also proved in \cite[Theorem 5.5]{can}). In fact, a smooth toric stack $\X$ is a fantastack if and only if it has a toric variety $X$ as a good moduli space and the morphism $\X\to X$ restricts to an isomorphism of tori.
\end{remark}

\begin{example}\label{eg:fantastack-A^n/G_m^n}
 Let $\Sigma$ be the trivial fan on $N=0$. Let $\beta\colon \ZZ^n\to N$ be the zero map. Then $\hhat\Sigma$ is the fan of $\AA^n$, and $G_\beta=\GG_m^n$, so $\F_{\Sigma,\beta}=[\AA^n/\GG_m^n]$.
\end{example}
\begin{example}\label{eg:fantastack-smooth-toric-varieties}
 By Remark \ref{rmk:smooth-toric-stacks-are-fantastacks}, any smooth toric variety is a fantastack. If $X_\Sigma$ is a smooth toric variety, where $\Sigma$ is a fan on a lattice $N$, we construct $\beta\colon \ZZ^n\to N$ by sending the generators of $\ZZ^n$ to the first lattice points along the rays of $\Sigma$. Then $X_\Sigma\cong \F_{\Sigma,\beta}$.

 For a general (non-smooth) fan $\Sigma$, one can still construct $\beta$ as above, but the resulting fantastack $\F_{\Sigma,\beta}$ is not isomorphic to $X_\Sigma$. However, it is the \emph{canonical stack} over $X_\Sigma$, a sort of minimal stacky resolution of singularities (see Section \ref{sec:canonical-stacks}).
\end{example}

\begin{notation}\label{not:fantastacks}
 When describing fantastacks, we draw the fan $\Sigma$ and label $\beta(e_i)$ with the number $i$.
\end{notation}
\begin{center}
\begin{tabular}{@{\extracolsep{1em}}ccccc}
  $\begin{tikzpicture}
   \draw [thick,->] (0,0) -- (2,0);
 \filldraw[draw=black,fill=white] (1,0) node[above=4pt] {$2$} circle (6pt)
   (1,0) node {\small $1$} circle (4pt);
 \end{tikzpicture}$
 &
 $\begin{tikzpicture}
 \clip (-.5,-.5) rectangle (1.5,2.5);
 \foreach \x/\y/\z/\w in {9/0/4/8}
   \filldraw [draw=black,fill=lightgray] (\x,\y) -- (0,0) -- (\z,\w);
 \draw[help lines] (-10,-10) grid (10,10);
 \foreach \x/\y/\z/\w in {9/0/4/8}
   \draw [thick] (\x,\y) -- (0,0) -- (\z,\w);
 \foreach \x/\y/\dottext in {1/0/1, 1/2/2}
   \filldraw[draw=black,fill=white] (\x,\y) node {$\dottext$} circle (6pt);
 \end{tikzpicture}$
 &
 $\begin{tikzpicture}
 \clip (-.5,-.5) rectangle (2.5,2.5);
 \foreach \x/\y/\z/\w in {9/0/4/8}
   \filldraw [draw=black,fill=lightgray] (\x,\y) -- (0,0) -- (\z,\w);
 \draw[help lines] (-10,-10) grid (10,10);
 \foreach \x/\y/\z/\w in {9/0/4/8}
   \draw [thick] (\x,\y) -- (0,0) -- (\z,\w);
 \foreach \x/\y/\dottext in {2/0/1, 1/2/2}
   \filldraw[draw=black,fill=white] (\x,\y) node {$\dottext$} circle (6pt);
 \end{tikzpicture}$
 &
 $\begin{tikzpicture}
 \clip (-.5,-.5) rectangle (1.5,2.5);
 \foreach \x/\y/\z/\w in {9/0/0/9}
   \filldraw [draw=black,fill=lightgray] (\x,\y) -- (0,0) -- (\z,\w);
 \draw[help lines] (-10,-10) grid (10,10);
 \foreach \x/\y/\z/\w in {9/0/0/9}
   \draw [thick] (\x,\y) -- (0,0) -- (\z,\w);
 \foreach \x/\y/\dottext in {1/0/1, 1/1/2, 0/1/3}
   \filldraw[draw=black,fill=white] (\x,\y) node {$\dottext$} circle (6pt);
 \end{tikzpicture}$
 &
 $\begin{tikzpicture}
    \pgfsetxvec{\pgfpointxy{.9}{-.1}}
    \pgfsetzvec{\pgfpointxy{-.6}{-.4}}
    \draw plot[only marks,mark=*] coordinates{(0,0,0) (2,0,0) (2,0,2) (0,0,2)};
    \draw [->] (0,0) -- (2.4,0,0);
    \draw [->] (0,0) -- (0,0,2.4);
    \draw [->,thick] (0,0,0) -- (0,2.4,0);
    \filldraw [draw=black,fill=lightgray]
      (0,0,0) -- (0,2,2) -- (2,2,2) -- cycle
      (0,0,0) -- (2,2,2) -- (2,2,0) -- cycle
      (0,2,0) -- (2,2,0) -- (2,2,2) -- (0,2,2) -- cycle;
    \draw [->,thick] (0,0,0) -- (2.3,2.3,0);
    \draw [->,thick] (0,0,0) -- (2.7,2.7,2.7);
    \draw [->,thick] (0,0,0) -- (0  ,2.5,2.5);
   \foreach \x/\y/\z/\dottext in {0/2/0/1, 0/2/2/2, 2/2/2/3, 2/2/0/4}
     \filldraw[draw=black,fill=white] (\x,\y,\z) node {$\dottext$} circle (6pt);
 \end{tikzpicture}$
 \\
 Example \ref{eg:fantastack-double-origin}&
 Example \ref{eg:fantastack-A_1}&
 Example \ref{eg:fantastack-A_1-rooted}&
 Example \ref{eg:fantastack-blowup-A^2}&
 Example \ref{eg:fantastack-square-cone}
\end{tabular}
\end{center}
\begin{example}\label{eg:fantastack-double-origin}
 Since a single cone contains all the $\beta(e_i)$, we have that $X_{\hhat\Sigma} = \AA^2$ (see Remark \ref{rmk:computing-X_hatSigma}). We have $\beta=(1\ 1)\colon \ZZ^2\to \ZZ$, and the cokernel of $\beta^*$ is
 \[
  \ZZ\xrightarrow{\beta^*=\smat{1\\ 1}} \ZZ^2\xrightarrow{\smat{1&\ng}} \ZZ.
 \]
 We see that $\F_{\Sigma,\beta}=[\AA^2/_{\smat{1&\ng}}\GG_m]$ (see Remark \ref{rmk:doing-fantastack-examples}). Note that this example contains the non-separated line $[(\AA^2\setminus 0)/_{\smat{1&\ng}}\GG_m]$ (Example \ref{eg:double-origin}) as an open substack.
\end{example}

\begin{example}\label{eg:fantastack-A_1}
 Since a single cone contains all the $\beta(e_i)$, we have that $X_{\hhat \Sigma}=\AA^2$. The cokernel of $\beta^*$ is
 \[
  \ZZ^2\xrightarrow{\beta^*=\smat{1&0\\ 1&2}} \ZZ^2\xrightarrow{\smat{1&1}} \ZZ/2.
 \]
 Note that $\beta^*$ can be obtained from the picture directly: the rows of $\beta^*$ are simply the coordinates of the $\beta(e_i)$.

 Therefore, $\F_{\Sigma,\beta} = [\AA^2/_{\smat{1&1}}\mu_2]$. This is a ``stacky resolution'' of the $A_1$ singularity
 \begin{align*}
  \AA^2/_{\smat{1&1}}\mu_2 &= \spec(k[x_1,x_2]^{\mu_2})\\
  &= \spec k[x_1^2,x_1x_2,x_2^2] = \spec \bigl(k[x,y,z]/(xy-z^2)\bigr).\qedhere
 \end{align*}
\end{example}

\begin{example}\label{eg:fantastack-A_1-rooted}
 As in the previous examples, $X_{\hhat \Sigma}=\AA^2$. The cokernel of $\beta^*$ is
 \[
  \ZZ^2\xrightarrow{\beta^*=\smat{2&0\\ 1&2}} \ZZ^2\xrightarrow{\smat{1&2}} \ZZ/4.
 \]
  Therefore, $\F_{\Sigma,\beta} = [\AA^2/_{\smat{1&2}}\mu_4]$. Like the previous example, this is a stacky resolution of the $A_1$ singularity
 \[
   \AA^2/_{\smat{1&2}}\mu_4 = \spec(k[x_1,t]^{\mu_4}) = \spec k[x_1^2,x_1t^2,t^4] = \spec \bigl(k[x,y,z]/(xy-z^2)\bigr).
 \]
 The coordinate $t$ in this example can be thought of as a formal square root of the coordinate $x_2$ in the previous example. In general, the moduli interpretation of smooth toric stacks presented in Section \ref{sec:moduli} shows that replacing $\beta(e_i)$ by a multiple corresponds to performing a root stack construction of the corresponding divisor (see for example \cite[\S1.3]{fmn} for a discussion of root constructions).
\end{example}

\begin{example}\label{eg:fantastack-blowup-A^2}
 We have $X_{\hhat\Sigma}=\AA^3$. The cokernel of $\beta^*$ is
 \[
  \ZZ^2\xrightarrow{\beta^*=\smat{1&0\\ 1&1\\ 0&1}}\ZZ^3\xrightarrow{\smat{1&-1&1}} \ZZ.
 \]
 So $\F_{\Sigma,\beta}=[\AA^3/_{\smat{1&\ng&1}}\GG_m]$.

 Note that refining the fan yields an \emph{open} substack. In this example, consider what happens when we refine the fan $\Sigma$ to the fan $\Sigma'$ below:
 \[\begin{tikzpicture}
 \clip (-.5,-.5) rectangle (1.5,2.5);
 \foreach \x/\y/\z/\w in {9/0/9/9, 9/9/0/9}
   \filldraw [draw=black,fill=lightgray] (\x,\y) -- (0,0) -- (\z,\w);
 \draw[help lines] (-10,-10) grid (10,10);
 \foreach \x/\y/\z/\w in {9/0/0/9, 9/9/0/9}
   \draw [thick] (\x,\y) -- (0,0) -- (\z,\w);
 \foreach \x/\y/\dottext in {1/0/1, 1/1/2, 0/1/3}
   \filldraw[draw=black,fill=white] (\x,\y) node {$\dottext$} circle (6pt);
 \end{tikzpicture}\]
 Here $G_\beta$ is unchanged; indeed, $G_\beta$ depends only on $\beta$, not on $\Sigma$. However, we remove $\mathrm{cone}(e_1,e_3)$ from $\hhat\Sigma'$. The resulting stack is therefore the open substack $\F_{\Sigma',\beta}=[(\AA^3\setminus V(x_1,x_3))/_{\smat{1&-1&1}}\GG_m]$, which is the blowup of $\AA^2$ at the origin (cf.~Example \ref{eg:fantastack-smooth-toric-varieties}).

 The birational transformation $Bl_0(\AA^2)\to \AA^2$ can therefore be realized as the morphism of good moduli spaces induced by the open immersion $\F_{\Sigma',\beta}\to \F_{\Sigma,\beta}$.
\end{example}
\begin{example}\label{eg:fantastack-square-cone}
 We have that $X_{\hhat \Sigma}=\AA^4$. The cokernel of $\beta^*$ is
 \[
  \ZZ^3\xrightarrow{\beta^*=\smat{0&0&1\\ 1&0&1\\ 1&1&1\\ 0&1&1}} \ZZ^4\xrightarrow{\smat{1&\ng&1&\ng}} \ZZ,
 \]
 so $\F_{\Sigma,\beta}=[\AA^4/_{\smat{1&\ng&1&\ng}}\GG_m]$. We will see in Section \ref{sec:canonical-stacks} that $\F_{\Sigma,\beta}$ is the canonical stack over the singular toric variety
 \begin{align*}
  X_\Sigma = \spec k[x_1,x_2,x_3,x_4]^{\GG_m} &= \spec k[x_1x_2,x_3x_4,x_1x_4,x_2x_3] \\
  &= \spec \bigl(k[x,y,z,w]/(xy-zw)\bigr).
 \end{align*}
 It can be regarded as a ``stacky resolution'' of the singularity.

 Note that the two standard toric small resolutions of this singularity are both open substacks of this stacky resolution.
 \[\begin{tikzpicture}
    \pgfsetxvec{\pgfpointxy{.9}{-.1}}
    \pgfsetzvec{\pgfpointxy{-.6}{-.4}}
    \draw plot[only marks,mark=*] coordinates{(0,0,0) (2,0,0) (2,0,2) (0,0,2)};
    \draw [->] (0,0) -- (2.4,0,0);
    \draw [->] (0,0) -- (0,0,2.4);
    \draw [->,thick] (0,0,0) -- (0,2.4,0);
    \filldraw [draw=black,fill=lightgray]
      (0,0,0) -- (0,2,2) -- (2,2,2) -- cycle
      (0,0,0) -- (2,2,2) -- (2,2,0) -- cycle
      (0,2,0) -- (2,2,0) -- (2,2,2) -- (0,2,2) -- cycle
      (0,2,0) -- (2,2,2);
    \draw [->,thick] (0,0,0) -- (2.3,2.3,0);
    \draw [->,thick] (0,0,0) -- (2.7,2.7,2.7);
    \draw [->,thick] (0,0,0) -- (0  ,2.5,2.5);
   \foreach \x/\y/\z/\dottext in {0/2/0/1, 0/2/2/2, 2/2/2/3, 2/2/0/4}
     \filldraw[draw=black,fill=white] (\x,\y,\z) node {$\dottext$} circle (6pt);
 \end{tikzpicture}
 \qquad\qquad
 \begin{tikzpicture}
    \pgfsetxvec{\pgfpointxy{.9}{-.1}}
    \pgfsetzvec{\pgfpointxy{-.6}{-.4}}
    \draw plot[only marks,mark=*] coordinates{(0,0,0) (2,0,0) (2,0,2) (0,0,2)};
    \draw [->] (0,0) -- (2.4,0,0);
    \draw [->] (0,0) -- (0,0,2.4);
    \draw [->,thick] (0,0,0) -- (0,2.4,0);
    \filldraw [draw=black,fill=lightgray]
      (0,0,0) -- (0,2,2) -- (2,2,2) -- cycle
      (0,0,0) -- (2,2,2) -- (2,2,0) -- cycle
      (0,2,0) -- (2,2,0) -- (2,2,2) -- (0,2,2) -- cycle
      (2,2,0) -- (0,2,2);
    \draw [->,thick] (0,0,0) -- (2.3,2.3,0);
    \draw [->,thick] (0,0,0) -- (2.7,2.7,2.7);
    \draw [->,thick] (0,0,0) -- (0  ,2.5,2.5);
   \foreach \x/\y/\z/\dottext in {0/2/0/1, 0/2/2/2, 2/2/2/3, 2/2/0/4}
     \filldraw[draw=black,fill=white] (\x,\y,\z) node {$\dottext$} circle (6pt);
 \end{tikzpicture}\]
 The above fans correspond to $[(\AA^4\setminus V(x_2,x_4))/_{\smat{1&\ng&1&\ng}}\GG_m]$ and $[(\AA^4\setminus V(x_1,x_3))/_{\smat{1&\ng&1&\ng}}\GG_m]$. These are both toric varieties (cf.~Example \ref{eg:fantastack-smooth-toric-varieties}).
\end{example}

\subsection{Some Non-fantastack Examples}

\begin{example}
 Suppose $\{n_1,\dots, n_k\}$ is a set of positive integers. Let $N$ be $\ZZ^r\oplus \bigoplus_{i=1}^k (\ZZ/n_i\ZZ)$, $L$ be $0$, $\Sigma$ be the trivial fan on $L$, and $\beta\colon L\to N$ the zero map.

 To compute $G_\beta$, we take a free resolution of $C(\beta)$, namely
 \[
  \ZZ^k\xrightarrow{\mathrm{diag}(n_1,\dots, n_k)\oplus 0}\ZZ^k\oplus \ZZ^r.
 \]
 We see that
 \[
  H^0(C(\beta)^*) = D(G_\beta^{0})=\ZZ^r,\qquad
  H^1(C(\beta)^*) = D(G_\beta^{1})=\bigoplus_{i=1}^k (\ZZ/n_i\ZZ).
 \]
 Therefore, $G_\beta=\GG_m^r\times \prod \mu_{n_i}$. Since $X_\Sigma=\spec k$, we have that $\X_{\Sigma,\beta}=BG_\beta$.
\end{example}

\begin{example}[{Cf.~\cite[Examples 2.1 and 3.5]{bcs}}]\label{eg:bar(M_1,1)}
 Consider the stacky fan in which $L=\ZZ^2$, $\Sigma = \raisebox{-2pt}{\tikz[scale=.5] \draw[<->] (1,0) -- (0,0) -- (0,1);}$ is the fan corresponding to $\AA^2\setminus \{0\}$, $N=\ZZ\oplus (\ZZ/2)$, and $\beta = \smat{2&\text-3 \\ 1&0}\colon \ZZ^2\to \ZZ\oplus (\ZZ/2)$. Then $C(\beta)^*$ is represented by the map $\smat{2& 1\\ \text- 3& 0\\ 0&2}\colon \ZZ^2\to \ZZ^3$. This map is injective, and its cokernel is $(6\ 4\ \text-3)\colon \ZZ^3\to \ZZ$. Therefore, $G_\beta = \GG_m$, and the induced map to $T_L=\GG_m^2$ is given by $t\mapsto (t^6,t^4)$. So $\X_{\Sigma,\beta} = [(\AA^2\setminus \{0\})/_{\smat{6& 4}}\GG_m]$. This is the weighted projective stack $\PP(6,4)$, the moduli stack of elliptic curves $\bbar{\mathcal M}_{1,1}$.
\end{example}

We repeat the previous example to illustrate that it can be realized as a closed substack of a fantastack. This approach is explained in Remark \ref{rmk:non-strict-is-closed-substack}.
\begin{example}\label{eg:non-strict-as-closed-substack}
 Consider the stacky fan in which $L=\ZZ^2$, $\Sigma = \raisebox{-2pt}{\tikz \draw[<->] (.4,0) -- (0,0) -- (0,.4);}$ is the fan corresponding to $\AA^2\setminus \{0\}$, $N=\ZZ\oplus (\ZZ/2)$, and $\beta = \smat{2&\text-3 \\ 1&0}\colon \ZZ^2\to \ZZ\oplus (\ZZ/2)$.

 We replace $\beta$ by the quasi-isomorphic map $\beta' = \smat{2&-3&0\\ \text1&0&2}\colon \ZZ^3\to \ZZ^2$ and the fan $\Sigma$ by the fan $\Sigma'$ obtained by adding the cone $\tau$.
 \[\raisebox{3.5em}{$\xymatrix{
  L=\ZZ^2\ar[d]_\beta \POS p+(0,.7) *+{\llap{$\Sigma=\;$}\tikz \draw[<->] (0,0,.7) -- (0,0,0) -- (.7,0,0);} & L\oplus \ZZ\ar[d]^{\beta'=\smat{2&-3&0\\ \text1&0&2}} \POS p+(0,1.2) *+{\begin{tikzpicture}
    \filldraw[fill=lightgray]
      (0,0,0) -- (.6,0,0) -- (.6,.6,0) -- (0,.6,0) --
      (0,.6,.6) -- (0,0,.6) -- cycle;
    \draw[->] (0,0,0) -- (.7,0,0);
    \draw[->] (0,0,0) -- (0,0,.7);
    \draw[->] (0,0,0) -- (0,.7,0) node[anchor=south] {$\tau$};
  \end{tikzpicture}\raisebox{1mm}{\rlap{$\;=\Sigma'$}}}\\
  N & \ZZ\oplus \ZZ
 }$}\qquad\qquad
 \begin{tikzpicture}
   \clip (-3.5,-.5) rectangle (2.5,2.5);
   \foreach \x/\y/\z/\w in {8/4/0/8, 0/8/-8/0}
     \filldraw [draw=black,fill=lightgray] (\x,\y) -- (0,0) -- (\z,\w);
   \draw[help lines] (-10,-10) grid (10,10);
   \foreach \x/\y/\z/\w in {8/4/0/8, 0/8/-8/0}
     \draw [thick] (\x,\y) -- (0,0) -- (\z,\w);
   \foreach \x/\y/\dottext in {2/1/1, -3/0/2, 0/2/3}
     \filldraw[draw=black,fill=white] (\x,\y) node {$\dottext$} circle (6pt);
 \end{tikzpicture}
 \]
 We see that $\X_{\Sigma',\beta'}$ is the fantastack corresponding to the fan on the right. Explicitly, it is the fantastack $[(\AA^3\setminus V(x_1,x_2))/_{\smat{6&4&\text-3}}\GG_m]$. The closed substack $\X_{\Sigma,\beta}$ is the divisor corresponding to the ``extra ray'', which is numbered 3 in the picture. That is, it is the divisor $V(x_3) = [(\AA^2\setminus V(x_1,x_2))/_{\smat{6&4}}\GG_m]$.
\end{example}

\section{Canonical Stacks}\label{sec:canonical-stacks}

Given a non-strict toric stack, there is a canonical smooth non-strict toric stack of which it is a good moduli space. The purpose of this section is to construct and characterize this canonical smooth stack.

Given a fan $\Sigma$ on a lattice $L$, the Cox construction \cite[\S 5.1]{cls} of the toric variety $X_\Sigma$ produces an open subscheme $U$ of $\AA^n$ and a subgroup $H\subseteq \GG_m^n$ so that $X_\Sigma=U/H$. That is, $[U/H]\to X_\Sigma$ is a good moduli space morphism in the sense of \cite{Alper:good}. We recall and generalize this construction here.

Let $(\Sigma,\beta)$ be a non-strict stacky fan. Let $\Sigma(1)$ be the set of rays of $\Sigma$. Let $M\subseteq L$ be the saturated sublattice spanned by $\Sigma$, and let $M'\subseteq L$ be a direct complement to $M$. For each ray $\rho\in \Sigma(1)$, let $u_\rho$ be the first element of $M$ along $\rho$, and let $e_\rho$ be the generator in $\ZZ^{\Sigma(1)}$ corresponding to $\rho$. We then have a morphism $\Phi\colon \ZZ^{\Sigma(1)}\times M'\to L$ given by $(e_\rho,m)\mapsto u_\rho+m$. We define a fan $\ttilde\Sigma$ on $\ZZ^{\Sigma(1)}\times M'$. For each $\sigma\in \Sigma$, we define $\tilde\sigma\in \ttilde\Sigma$ as the cone generated by $\{e_\rho|\rho\in \sigma\}$. The morphism of non-strict stacky fans
\[\xymatrix{
 \ZZ^{\Sigma(1)}\times M' \ar[r]^-\Phi\ar[d]_{\tilde\beta} \POS p+(0,.7) *+{\ttilde\Sigma}="tsig" & L\ar[d]^{\beta} \POS p+(0,.7) *+{\Sigma} \ar@{<-} "tsig"\\
 N\ar@{=}[r] & N
}\]
induces a toric morphism $\X_{\ttilde\Sigma,\tilde\beta}\to\X_{\Sigma,\beta}$.

\begin{definition}\label{def:canonical-stack}
 We call $\X_{\ttilde\Sigma,\tilde\beta}$ the \emph{canonical stack} over $\X_{\Sigma,\beta}$, and we say that the morphism $\X_{\ttilde\Sigma,\tilde\beta}\to \X_{\Sigma,\beta}$ is a \emph{canonical stack morphism}.
\end{definition}

\begin{remark}\label{rmk:usual-cox-mod-G_beta}
 The Cox construction expresses $X_\Sigma$ as a quotient of $X_{\ttilde\Sigma}$ by $G_\Phi$. Applying Lemma \ref{lem:triangle=>ses-of-G_betas} ($\Phi$ has finite cokernel by construction), we see that the morphism constructed above is obtained by quotienting the morphism $[X_{\ttilde\Sigma}/G_\Phi]\to X_\Sigma$ by the action of $G_\beta$. This shows that the construction above commutes with quotienting $\X_{\Sigma,\beta}$ by its torus (i.e.~replacing $G_\beta$ by $G_{L\to 0}$).
\end{remark}

The remainder of this subsection is dedicated to justifying this terminology by showing that the canonical stack has a universal property (Proposition \ref{prop:canonical-stack}). Notably, this universal property shows that the canonical stack depends only on the stack $\X_{\Sigma,\beta}$ and its torus action, not on the stacky fan $(\Sigma,\beta)$.

Recall (Definition \ref{def:cohomologically-affine}) that $\X_{\Sigma,\beta}$ is \emph{cohomologically affine} if $X_\Sigma$ is affine. As shown in Remark \ref{rmk:coh-affine-definitions-agree}, this property depends only on $\X_{\Sigma,\beta}$ and not on the stacky fan $(\Sigma,\beta)$.

\begin{lemma}\label{lem:canonical-stack-affine}
 Let $\X_{\Sigma,\beta}$ be a cohomologically affine toric stack with $n$ torus-invariant irreducible divisors. Suppose $f\colon \X_{\Sigma',\beta'}\to \X_{\Sigma,\beta}$ is a toric surjection from a smooth cohomologically affine toric stack with $n$ torus-invariant divisors, which restricts to an isomorphism on tori. Then $\X_{\Sigma',\beta'}\to \X_{\Sigma,\beta}$ factors uniquely through the canonical stack over $\X_{\Sigma,\beta}$.
\end{lemma}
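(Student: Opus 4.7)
The plan is to reduce to the combinatorics of stacky fans and construct the factorization there.

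First, I would apply Theorem~\ref{thm:morphisms-come-from-fans} to replace $(\Sigma',\beta')$ with a stacky fan admitting an honest morphism of stacky fans $(\Phi,\phi)\colon (\Sigma',\beta') \to (\Sigma,\beta)$ inducing $f$; this replacement preserves the stack $\X_{\Sigma',\beta'}$ and hence all of its hypotheses. Since $f$ restricts to an isomorphism on tori, $\phi\colon N'\to N$ is an isomorphism, and I may identify $\phi = \id_N$. Cohomological affineness of both stacks then forces $\Sigma$ and $\Sigma'$ to consist of the faces of single maximal cones $\sigma$ and $\sigma'$ (as $X_\Sigma$ and $X_{\Sigma'}$ are affine). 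The correspondence between rays and torus-invariant divisors gives $|\Sigma(1)| = |\Sigma'(1)| = n$, and smoothness of $\X_{\Sigma',\beta'}$ makes $\sigma'$ a smooth cone, so its ray generators $u'_1,\dots,u'_n$ form part of a basis of $L'$ together with a basis of a direct complement $L''$.

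The main step, and the chief obstacle, is to show that $\Phi$ induces a bijection between the rays of $\Sigma'$ and those of $\Sigma$. Combining the matching ray counts with the surjectivity of $f$ and the standard correspondence between torus-invariant substacks and cones, one argues that if some $u'_i$ landed in the relative interior of a higher-dimensional cone of $\Sigma$ or were sent to the origin, then the closure of the corresponding divisor $D_{u'_i}$ would map to a torus-invariant substack of codimension at least two (or cover the entire target), and a pigeonhole count on the $n$ torus-invariant divisors downstairs would then contradict surjectivity.

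Given the induced bijection $\pi$ on rays, write $\Phi(u'_i) = a_i u_{\pi(i)}$ for positive integers $a_i$ and define the lift $\Psi\colon L' \to \ZZ^{\Sigma(1)} \times M'$ by $u'_i \mapsto a_i e_{\pi(i)}$ on the ray generators, extending to $L''$ by lifting $\Phi|_{L''}$ through $\widetilde\Phi$; this extension is possible because the smoothness, iso-on-tori, and surjectivity hypotheses ensure $\Phi|_{L''}$ takes values in the image $\langle u_\rho\rangle + M'$ of $\widetilde\Phi$, or (failing that) one may pass to the fiber product $\X_{\Sigma',\beta'}\times_{\X_{\Sigma,\beta}}\X_{\ttilde\Sigma,\tilde\beta}$ and apply Theorem~\ref{thm:morphisms-come-from-fans} once more to obtain an appropriate stacky fan refinement on the same stack. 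By construction $(\Psi,\id_N)$ is a morphism of stacky fans to $(\ttilde\Sigma,\tilde\beta)$ whose composition with the canonical stack morphism recovers $(\Phi,\id_N)$, giving the desired factorization of $f$. Uniqueness up to $2$-isomorphism follows since $\Psi(u'_i) = a_i e_{\pi(i)}$ is forced by $\widetilde\Phi \circ \Psi(u'_i) = \Phi(u'_i)$ together with the requirement that rays map to rays, while any residual ambiguity on $L''$ lies in $\ker\widetilde\Phi$ and corresponds to a $2$-morphism via the action of the relevant subgroup of $G_{\tilde\beta}$.
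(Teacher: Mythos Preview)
Your overall architecture matches the paper's: realize $f$ by a morphism of stacky fans $(\Phi,\id_N)$, use that both fans are single cones with $n$ rays each, establish a bijection on rays, and then write down the lift $\Psi$ sending each ray generator $u'_i$ to $a_i e_{\pi(i)}$. But there is a genuine gap at the step where you extend $\Psi$ to the complement $L''$.

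Your claim that $\Phi|_{L''}$ lands in $\im\widetilde\Phi = \langle u_\rho\rangle + M'$ is not justified and can fail. Take $L=\ZZ^2$, $\sigma=\mathrm{cone}((1,0),(1,2))$, $\beta=\id$, and $L'=\ZZ^3$ with $\sigma'=\mathrm{cone}(e_1,e_2)$, $\Phi(e_1)=(1,0)$, $\Phi(e_2)=(1,2)$, $\Phi(e_3)=(0,1)$, $\beta'=\Phi$. All hypotheses of the lemma hold, yet $\Phi(e_3)=(0,1)\notin\im\widetilde\Phi=\ZZ\times 2\ZZ$, so no lattice map $\Psi$ satisfies $\widetilde\Phi\circ\Psi=\Phi$. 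Your fallback via the fiber product and a second application of Theorem~\ref{thm:morphisms-come-from-fans} is too vague to count as a proof; you have not argued that the resulting refinement actually produces a fan morphism to $(\ttilde\Sigma,\tilde\beta)$.

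The paper avoids this entirely with one move you are missing: quotient by the torus. By Remark~\ref{rmk:usual-cox-mod-G_beta}, the canonical stack construction commutes with this quotient, so it suffices to check the factorization after replacing $G_\beta$ by $T_L$ and $G_{\beta'}$ by $T_{L'}$. Now $\X_{\Sigma',\beta'}$ becomes $[X_{\sigma'}/T_{L'}]\cong[\AA^n/\GG_m^n]$, and one may take $\Sigma'$ to literally be the fan of $\AA^n$ on $\ZZ^n$. The complement $L''$ disappears, and the factorization $e_i\mapsto k_i e_{\rho_i}$ is immediate and visibly unique. The paper also streamlines your ray-bijection argument by invoking Lemma~\ref{lem:coh-affine-surjection}: surjectivity of $f$ forces $\Phi(\sigma')=\sigma$, and since a cone with $n$ extremal rays cannot be generated by $n$ vectors unless each lies on a distinct extremal ray, the bijection follows by pigeonhole on the cone generators rather than on divisors.
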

\begin{proof}
 By Theorem \ref{thm:morphisms-come-from-fans}, we may assume $f$ is induced by a morphism of stacky fans $(\Phi,\phi)\colon (\Sigma',\beta')\to (\Sigma,\beta)$. Since $f$ restricts to an isomorphism on tori, $\phi$ must be an isomorphism.

 Since we are only considering torus-equivariant morphisms, we may verify the property after quotienting by the action of the torus, so we may assume $\X_{\Sigma,\beta}$ is the quotient of an affine toric variety by its torus, and $\X_{\Sigma',\beta'}$ is the quotient of a smooth affine toric variety with $n$ divisors by its torus. This identifies $\X_{\Sigma',\beta'}$ as $[\AA^n/\GG_m^n]$, so we may assume $\Sigma'$ is the fan of $\AA^n$. We identify the first lattice points along the rays of $\Sigma'$ with the generators $e_i\in \ZZ^n$.

 Since $f$ is surjective, the induced morphism $\Sigma'\to \Sigma$ is surjective by Lemma \ref{lem:coh-affine-surjection}. Every ray of $\Sigma$ is then the image of a unique ray of $\Sigma'$, since $\Sigma'$ has only $n$ rays. Suppose $\Phi(e_i)=k_i \rho_i$. Then we see that $\Phi$ factors uniquely through the canonical stack via the morphism of fans $\Sigma'\to \ttilde\Sigma$ given by sending $e_i$ to $k_i e_{\rho_i}$.
\end{proof}

\begin{remark}[``Canonical stacks are stable under base change by open immersions'']\label{rmk:canonical-stack-open-immersions}
 The pre-image of a torus-invariant divisor of $\X_{\Sigma,\beta}$ is a divisor in its canonical stack. So by Lemma \ref{lem:canonical-stack-affine}, restricting a canonical stack morphism to the open complement of a torus-invariant divisor yields a canonical stack morphism.
\end{remark}

As a corollary, we get the following result.

\begin{proposition}[Universal property of the canonical stack]\label{prop:canonical-stack}
 Suppose $\X_{\Sigma',\beta'}\to \X_{\Sigma,\beta}$ is a toric morphism from a smooth toric stack, which restricts to an isomorphism of tori, and which restricts to a canonical stack morphism over every torus-invariant cohomologically affine open substack of $\X_{\Sigma,\beta}$. Then $\X_{\Sigma',\beta'}\to\X_{\Sigma,\beta}$ is a canonical stack morphism.
\end{proposition}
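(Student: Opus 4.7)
The approach is to show that the given morphism $f\colon \X_{\Sigma',\beta'}\to \X_{\Sigma,\beta}$ agrees, over a torus-invariant cohomologically affine open cover of $\X_{\Sigma,\beta}$, with the canonical stack morphism $g\colon \X_{\ttilde\Sigma,\tilde\beta}\to \X_{\Sigma,\beta}$ of Definition \ref{def:canonical-stack}, and then to glue the resulting local isomorphisms into a global one using the uniqueness assertion in Lemma \ref{lem:canonical-stack-affine}.

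First, Theorem \ref{thm:morphisms-come-from-fans} lets me replace $(\Sigma',\beta')$ by an isomorphic stacky fan and assume $f$ is induced by a morphism of stacky fans $(\Phi,\phi)\colon (\Sigma',\beta')\to (\Sigma,\beta)$; since $f$ restricts to an isomorphism of tori, $\phi$ must be an isomorphism. For each cone $\sigma\in \Sigma$, let $\U_\sigma\subseteq \X_{\Sigma,\beta}$ denote the torus-invariant cohomologically affine open substack corresponding to the subfan of faces of $\sigma$. By hypothesis, $f^{-1}(\U_\sigma)\to \U_\sigma$ is a canonical stack morphism, and by Remark \ref{rmk:canonical-stack-open-immersions} so is $g^{-1}(\U_\sigma)\to \U_\sigma$. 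Applying Lemma \ref{lem:canonical-stack-affine} in both directions produces unique factorizations of each of these morphisms through the other, whose composites are forced by uniqueness to be identities; this yields a unique isomorphism $f^{-1}(\U_\sigma)\cong g^{-1}(\U_\sigma)$ over $\U_\sigma$.

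As $\sigma$ ranges over the maximal cones of $\Sigma$, the $\U_\sigma$ form an open cover of $\X_{\Sigma,\beta}$, and the overlaps are of the form $\U_{\sigma\cap \sigma'}$, which are again torus-invariant cohomologically affine. The uniqueness of the local isomorphisms constructed above, applied over each overlap, guarantees that the local isomorphisms agree where they meet. Gluing therefore produces a global isomorphism $\X_{\Sigma',\beta'}\cong \X_{\ttilde\Sigma,\tilde\beta}$ intertwining $f$ with $g$, which exhibits $f$ as a canonical stack morphism. The main obstacle will be verifying this gluing step rigorously: it depends on Lemma \ref{lem:canonical-stack-affine} yielding \emph{unique} factorizations as morphisms (rather than merely unique up to $2$-isomorphism), so that the local isomorphisms match on overlaps without needing a further compatible choice of $2$-cells; a careful reading of the proof of that lemma, in which the factorization is pinned down ray-by-ray by the rule $e_i\mapsto k_i e_{\rho_i}$, shows that this rigidity does hold.
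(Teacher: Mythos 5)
Your argument is correct and is essentially the paper's intended one: the paper states this proposition as an immediate corollary of Lemma \ref{lem:canonical-stack-affine} and Remark \ref{rmk:canonical-stack-open-immersions}, and your write-up simply fills in the local-comparison-plus-gluing details, with the gluing on overlaps $\U_{\sigma\cap\sigma'}$ justified by the same uniqueness statement. Your closing observation that the uniqueness in Lemma \ref{lem:canonical-stack-affine} is pinned down at the level of morphisms of stacky fans (hence strict, not merely up to $2$-isomorphism) is exactly the right resolution of the only delicate point.
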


By Remark \ref{rmk:smooth-toric-stacks-are-fantastacks}, we see that the canonical stack morphism over a smooth toric stack is an isomorphism. In particular, this shows that for any non-strict toric stack $\X_{\Sigma,\beta}$, the canonical stack is isomorphic to $\X_{\Sigma,\beta}$ over its smooth locus. Thus the canonical stack can be regarded as a (canonical!) ``stacky resolution of singularities'' (cf.~Examples \ref{eg:fantastack-A_1} and \ref{eg:fantastack-square-cone}).

By Remark \ref{rmk:canonical-stack-open-immersions}, the definition of a canonical stack morphism can be extended to stacks which are only locally known to be toric stacks. This will be important in \cite{toricartin2}, where we prove that certain stacks are toric by showing that they are locally toric, that their canonical stacks are (globally) toric, and that the property of being toric can be ``descended'' along canonical stack morphisms.

\begin{definition}\label{def:general-canonical-stack}
 Suppose $\X$ is a stack with an open cover by non-strict toric stacks with a common torus. A morphism from a smooth stack $\Y\to \X$ is a \emph{canonical stack morphism} if it restricts to canonical stack morphisms on the open toric substacks of $\X$.
\end{definition}

\section{Toric Good Moduli Space Morphisms}\label{sec:gms-morphisms}

\subsection{Good Moduli Space Morphisms}

In \cite{Alper:good}, Alper introduces the notion of a good moduli space morphism, which generalizes the notion of a good quotient \cite{git} and is moreover a common generalization of the notion of a tame Artin stack \cite[Definition 3.1]{AOV} and of a coarse moduli space \cite[Theorem 4.10]{FaltingsC:degenerationOfAbelian}. Our goal in this section is to prove Theorem \ref{thm:main-gms}, which characterizes toric good moduli space morphisms.

\begin{definition}\label{def:gms}
 A quasi-compact and quasi-separated morphism of algebraic stacks $f\colon \X\to \Y$ is a \emph{good moduli space morphism} if
 \begin{itemize}
  \item ($f$ is \emph{Stein}) the morphism $\O_\Y\to f_*\O_\X$ is an isomorphism, and
  \item ($f$ is \emph{cohomologically affine}) the pushforward functor $f_*\colon \qcoh(\O_\X)\to \qcoh(\O_\Y)$ is exact.
 \end{itemize}
\end{definition}

\begin{definition}\label{def:unstable}
 We say that a cone $\tau$ of a non-strict stacky fan $(\Sigma,\beta\colon L\to N)$ is \emph{unstable} if any of the following equivalent conditions are satisfied:
 \begin{itemize}
  \item every linear functional $N\to \ZZ$ which is non-negative on $\beta(\tau)$ vanishes on $\beta(\tau)$,
  \item the relative interior of the image of $\tau$ in the lattice $N/N_\tor$ contains $0$, or
  \item $\tau\cap \ker\beta$ is not contained in any proper face of $\tau$.
 \end{itemize}
\end{definition}

As in Appendix \ref{sec:non-uniqueness-of-fans}, for a morphism of fans $\Phi\colon \Sigma\to \Sigma'$, the pre-image of a cone $\Phi^{-1}(\sigma')$ refers to the subfan of cones which are mapped into $\sigma'$, and we say that this pre-image ``is a single cone $\sigma\in \Sigma$'' if it is the fan consisting of $\sigma$ and its faces.
\begin{theorem}
\label{thm:main-gms}
 Let $(\Phi,\phi)\colon (\Sigma,\beta\colon L\to N)\to (\Sigma',\beta'\colon L'\to N')$ be a morphism of non-strict stacky fans, where $\beta$ has finite cokernel. The induced morphism $\X_{\Sigma,\beta}\to\X_{\Sigma',\beta'}$ is a good moduli space morphism if and only if
 \begin{enumerate}
  \item For every $\sigma'\in \Sigma'$, the pre-image $\Phi^{-1}(\sigma')$ is a single cone $\sigma\in \Sigma$ with $\Phi(\sigma)=\sigma'$.\\
  In particular, the pre-image of the zero cone is some cone $\tau\in \Sigma$, and the pre-image of any other cone has $\tau$ as a face.  Let $\tau^{gp}$ denote $(L\cap\tau)^{gp}$.
  \item $\tau$ is unstable,
  \item $\phi$ is surjective, and
  \item $(\ker\phi)/\beta(\tau^\gp)$ is finite.
 \end{enumerate}
\end{theorem}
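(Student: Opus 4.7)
The plan is to reduce to the affine case by localizing on the target, then perform a character-theoretic computation on $G_\beta$-invariants.

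\textbf{Reduction to the affine case.} Good moduli space morphisms are stable under arbitrary base change and local on the target in the smooth topology. The toric stack $\X_{\Sigma',\beta'}$ is covered by its torus-invariant cohomologically affine open substacks $\X_{\sigma',\beta'}$, one for each $\sigma' \in \Sigma'$. Condition (1) asserts that the preimage in $\X_{\Sigma,\beta}$ is the single cohomologically affine substack $\X_{\sigma,\beta}$; for the ``only if'' direction, the necessity of (1) follows from the requirements that the preimage of a cohomologically affine open under a good moduli space morphism must itself be cohomologically affine and that the morphism must be surjective on orbits. Thus we reduce to the case where $\Sigma$ consists of $\sigma$ and its faces, $\Sigma'$ consists of $\sigma'$ and its faces, and $\Phi(\sigma) = \sigma'$.

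\textbf{Global sections and the affine criterion.} In this affine case, since $G_\beta$ and $G_{\beta'}$ are diagonalizable, both stacks are cohomologically affine. The morphism $\X_{\Sigma,\beta} \to \X_{\Sigma',\beta'}$ is a good moduli space morphism if and only if it is cohomologically affine and Stein. A monomial $x^m \in k[\sigma^\vee \cap L^*]$ is $G_\beta$-invariant exactly when its character in $D(G_\beta) = \cok(\beta^*)$ is trivial, i.e., when $m \in \beta^*(N^*)$. Hence
\[
  \Gamma(\X_{\Sigma,\beta},\O) \;=\; k[\sigma^\vee \cap \beta^*(N^*)] \;\cong\; k[\beta(\sigma)^\vee \cap N^*],
\]
and analogously $\Gamma(\X_{\Sigma',\beta'},\O)$ is a monoid algebra built from $\beta'(\sigma')^\vee$ together with the torsion contributions from $H^1(C(\beta')^*)$. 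The pullback is induced by $\phi^*: N'^* \to N^*$, and the problem becomes that of verifying $\phi^*$ identifies the monoid $\beta'(\sigma')^\vee \cap N'^*$ bijectively with $\beta(\sigma)^\vee \cap N^*$.

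\textbf{Matching the combinatorial conditions.} Condition (3), surjectivity of $\phi$, is equivalent to $\phi^*$ being injective (by Remark \ref{rmk:cok-finite-iff-dual-injective}), which makes the pullback of characters faithful. Condition (2), the instability of $\tau$, says by the first formulation in Definition \ref{def:unstable} that every $n \in N^*$ non-negative on $\beta(\tau)$ vanishes there; since $\beta(\tau) \subseteq \beta(\sigma)$, this forces $\beta(\sigma)^\vee \cap N^* \subseteq \beta(\tau)^\perp$, meaning every $G_\beta$-invariant global section is constant along the image of $\tau$. Condition (4), finiteness of $(\ker\phi)/\beta(\tau^{gp})$, controls the lattice index and ensures that $\phi^*(N'^*)$ is of finite index in $\beta(\tau^{gp})^\perp \cap N^*$. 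Combined, conditions (2)--(4) establish that $\phi^*$ induces an isomorphism on the appropriate monoids, giving both the Stein property and cohomological affineness.

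\textbf{Main obstacle.} The most delicate step is the necessity of condition (2). If $\tau$ is \emph{not} unstable, there exists $n \in N^*$ with $n|_{\beta(\tau)} \geq 0$ but not identically zero on $\beta(\tau)$; the corresponding monomial gives a non-constant $G_\beta$-invariant element of $\Gamma(\X_{\Sigma,\beta},\O)$ which is not in the image of $\phi^*$, obstructing Steinness. Producing this obstruction explicitly (and handling the torsion carefully when $\beta'$ is non-strict, so that invariants of the target are correctly accounted for via $H^i(C(\beta')^*)$) constitutes the technical heart of the ``only if'' direction. The ``if'' direction, by contrast, is essentially the straightforward monoid bijection argument outlined above.
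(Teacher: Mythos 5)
Your reduction to the affine case, and the derivation of the necessity of condition (1) from cohomological affineness and surjectivity of good moduli space morphisms, matches the paper's proof of Theorem \ref{thm:main-gms} exactly. But the core of your argument has two genuine gaps.

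First, the invariant-monomial comparison you propose only establishes the good moduli space property when the \emph{target is a scheme}. The paper's Lemma \ref{lem:quot-by-kernel-is-gms2} is precisely your computation ($G_\beta$-invariant monomials are those in $\beta^*(N^*)\cap\sigma^\vee$, via a free resolution of $N$), but it is stated and used only for $\X_{\sigma,\beta}\to X_{\sigma'}$ with $\beta'$ an isomorphism. When the target $[X_{\sigma'}/G_{\beta'}]$ is a nontrivial stack, Stein-ness is a condition on the sheaf $f_*\O_\X$, which must be verified after base change to the smooth cover $X_{\sigma'}\to[X_{\sigma'}/G_{\beta'}]$; the fiber product $\X_{\sigma,\beta}\times_{\X_{\sigma',\beta'}}X_{\sigma'}$ is a $G_{\beta'}$-torsor over $\X_{\sigma,\beta}$, not $[X_\sigma/G_\beta]$ itself, so matching the two rings of global invariants proves nothing directly. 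The paper circumvents this by a dévissage (Proposition \ref{prop:main-gms}): it factors $(\Phi,\phi)$ into crushing the unstable cone (Proposition \ref{prop:crush-unstable-is-gms}), killing finite generic stackiness (Lemma \ref{lem:removing-finite-generic-stackiness-gms}), and an isomorphism on tori (Lemma \ref{lem:isom-on-tori-is-gms}), and in each step writes the source as $\bigl[[X_\sigma/G]/G_{\beta'}\bigr]$ so that the invariant computation is applied only to a map onto a scheme, with the rest handled by smooth-local descent of the GMS property. Your proposal needs this intermediate structure; the monoid bijection alone is not the right criterion.

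Second, your claim that condition (3) is ``equivalent to $\phi^*$ being injective'' is incorrect: by Remark \ref{rmk:cok-finite-iff-dual-injective}, injectivity of $\phi^*$ is equivalent to $\cok\phi$ being \emph{finite}, which is strictly weaker than surjectivity of $\phi$ when $N'$ has torsion. The character lattices $N^*$ and $N'^*$ are blind to torsion, so no argument phrased purely in terms of invariant monomials can detect the difference; yet the difference matters, since a finite nonzero cokernel of $\phi$ produces a residual gerbe on the target that obstructs Stein-ness. This is exactly what the paper's Lemma \ref{lem:gms-when-L=0} handles, via the computation that $BG\to BH$ is a good moduli space morphism iff $G\to H$ is surjective, which for diagonalizable groups of finite type comes down to exactness properties of $\ext^1(-,\ZZ)$ rather than $\hom(-,\ZZ)$. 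Without an argument of this kind your ``if'' direction would wrongly accept maps $\phi$ with finite nonzero cokernel, and your ``only if'' direction would only yield finiteness of $\cok\phi$, not surjectivity.
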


As a corollary of Theorem \ref{thm:main-gms}, we obtain a combinatorial criterion for when a toric stack has a toric variety as a good moduli space. This is done by explicitly constructing a fan $(\Sigma',\id_{N'})$ for the good moduli space.
\begin{notation}\label{not:fan-of-gms}
  Suppose $(\Sigma,\beta\colon L\to N)$ is a non-strict stacky fan with $\cok\beta$ finite and with the property that among unstable cones of $\Sigma$, there is a unique maximal one $\tau$. Let $N'=N/\sat_N(\beta(\tau^\gp))$, let $\phi\colon N\to N'$ be the quotient map, and let $\Phi=\phi\circ \beta$. Let $\Sigma'$ be the set of cones $\sigma'$ on $N'$ such that
   \begin{enumerate}
    \item[(a)] $\Phi^{-1}(\sigma')$ is a single cone, and
    \item[(b)] $\sigma' = \Phi(\Phi^{-1}(\sigma))$.
   \end{enumerate}
  This $\Sigma'$ is a fan.\footnote{Any face $\sigma''$ of $\sigma'\in \Sigma'$ is the vanishing locus of some linear functional $\lambda'\in \hom(N',\ZZ)$ which is non-negative on $\sigma'$. Then $\Phi^{-1}(\sigma'')$ is the vanishing locus of $\lambda'$ on $\Phi^{-1}(\sigma')$, so it is a face of $\Phi^{-1}(\sigma')$ and hence a single cone. It is clear that this cone must surject onto $\sigma''$, so $\Sigma'$ is closed under taking faces.

  If $\sigma',\sigma''\in \Sigma'$, then $\Phi^{-1}(\sigma')$ and $\Phi^{-1}(\sigma'')$ are cones in $\Sigma$, so their intersection is a common face. That is, there is some linear functional $\lambda\in\hom(N,\ZZ)$ which is non-negative on $\Phi^{-1}(\sigma')$ and non-positive on $\Phi^{-1}(\sigma'')$. Since $\tau$ is a common face of these two cones, $\lambda$ must vanish on $\tau^\gp$, so it must be induced by some $\lambda'\in \hom(N',\ZZ)$ (possibly after scaling). This $\lambda'$ is then non-negative on $\sigma'$ and non-positive on $\sigma''$, so the intersection of the two is a common face.}
\end{notation}

\begin{corollary}
\label{cor:main-gms-tv}
 Let $(\Sigma,\beta\colon L\to N)$ be a non-strict stacky fan with $\cok\beta$ finite. Then $\X_{\Sigma,\beta}$ has a variety as a good moduli space if and only if
 \begin{enumerate}
   \item[(i)] among unstable cones of $\Sigma$, there is a unique maximal one $\tau$, and
   \item[(ii)] using Notation \ref{not:fan-of-gms}, $\Phi$ induces a map of fans $(\Sigma,N)\to(\Sigma',N')$.
 \end{enumerate}
Moreover, if the conditions are satisfied, the good moduli space of $\X_{\Sigma,\beta}$ is the natural toric morphism to $X_{\Sigma'}$.
\end{corollary}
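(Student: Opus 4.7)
My plan is to deduce both directions from Theorem~\ref{thm:main-gms}, applied to the morphism of non-strict stacky fans $(\Phi,\phi)\colon(\Sigma,\beta)\to(\Sigma',\id_{N'})$ of Notation~\ref{not:fan-of-gms}.

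\emph{Sufficiency.} Assuming (i) and (ii), $(\Phi,\phi)$ is a bona fide morphism of non-strict stacky fans into the stacky fan of the toric variety $X_{\Sigma'}$. I then check the four conditions of Theorem~\ref{thm:main-gms}: Condition~1 is the defining property of the cones of $\Sigma'$, and under it the preimage of the zero cone is $\tau$; Condition~2 is (i); Condition~3 holds because $\phi$ is a quotient map; and Condition~4 holds since $\ker\phi/\beta(\tau^\gp)=\sat_N(\beta(\tau^\gp))/\beta(\tau^\gp)$ is the torsion part of a finitely generated group.

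\emph{Necessity.} Suppose $\X_{\Sigma,\beta}$ has a variety $Y$ as a good moduli space. First I argue that $Y$ is a toric variety $X_{\Sigma''}$ on some lattice $N''$: the $T_N$-action descends to $Y$ with a dense open orbit, $Y$ inherits normality from $\X_{\Sigma,\beta}$, and $Y$ is separated as a variety. Because the target is a bare toric variety (stacky fan $(\Sigma'',\id_{N''})$), the construction in the proof of Theorem~\ref{thm:morphisms-come-from-fans} simplifies --- the fibre product $X_\Sigma\times_{X_{\Sigma''}}X_{\Sigma''}$ equals $X_\Sigma$, so the auxiliary stacky fan produced there is $(\Sigma,\beta)$ itself --- and the good moduli space morphism is induced by a morphism of stacky fans $(\Phi'',\phi'')\colon(\Sigma,\beta)\to(\Sigma'',\id_{N''})$. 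Applying Theorem~\ref{thm:main-gms}, Condition~2 makes $\tau:=(\Phi'')^{-1}(0)$ unstable; for any unstable $\sigma\in\Sigma$ the image $\beta(\sigma)$ in $N/N_\tor$ is a linear subspace (its relative interior contains $0$), so $\Phi''(\sigma)=\phi''(\beta(\sigma))$ is a linear subspace of $N''$, and being contained in a strictly convex cone of $\Sigma''$ forces $\Phi''(\sigma)=0$ and $\sigma\subseteq\tau$. This gives (i) with maximal unstable cone $\tau$. Since $N''$ is torsion-free, $\ker\phi''$ is saturated in $N$; together with Condition~4 this forces $\ker\phi''=\sat_N(\beta(\tau^\gp))$, so $\phi''$ is the quotient $\phi$ of Notation~\ref{not:fan-of-gms} and $\Phi''=\Phi$. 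Condition~1 then reads as $\Sigma''\subseteq\Sigma'$, so $\Phi$ maps $\Sigma$ into $\Sigma'$, which is (ii). Invoking the sufficiency direction now gives a second good moduli space morphism $\X_{\Sigma,\beta}\to X_{\Sigma'}$, and uniqueness of good moduli spaces forces $X_{\Sigma''}\cong X_{\Sigma'}$, whence $\Sigma''=\Sigma'$ and the ``moreover'' statement.

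The subtlest step I foresee is the identification $\ker\phi''=\sat_N(\beta(\tau^\gp))$, which is the bridge linking the combinatorics extracted from an arbitrary GMS morphism to the canonical construction in Notation~\ref{not:fan-of-gms}; a secondary concern is verifying that $Y$ is automatically a toric variety in the usual sense, i.e.\ that the torus action descends with a dense orbit and that $Y$ is separated and normal.
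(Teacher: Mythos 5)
Your proposal is correct and follows the paper's proof in its essentials: both directions are deduced from Theorem \ref{thm:main-gms} applied to a morphism of stacky fans into a bare toric variety, and the identification $\ker\phi''=\sat_N(\beta(\tau^\gp))$ via saturation (torsion-freeness of $N''$) plus condition 4 is exactly the paper's argument. Two points of divergence are worth recording. First, for the reduction to a morphism of stacky fans, the paper invokes Theorem \ref{thm:morphisms-come-from-fans} in full generality and then uses Theorem \ref{thm:isomorphisms} to transport conditions (i) and (ii) across the resulting isomorphism $(\Sigma_0,\beta_0)\to(\Sigma,\beta)$; your observation that the fibre product $X_\Sigma\times_{\X_{\Sigma'',\id}}X_{\Sigma''}$ collapses to $X_\Sigma$ when the target is a variety, so that $(\Sigma_0,\beta_0)=(\Sigma,\beta)$ on the nose, is correct and lets you bypass Theorem \ref{thm:isomorphisms} entirely --- a small but genuine simplification. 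Second, the step you flag as a ``secondary concern'' is in fact the one place where your write-up falls short of a proof: the claim that a variety good moduli space of $\X_{\Sigma,\beta}$ is automatically a toric variety receiving a \emph{toric} morphism requires showing that the $T_N$-action descends to $Y$ (via the universal property of good moduli spaces under the flat base change $T_N\times\X\to\X$) and that the image of the torus is a dense open orbit; the paper outsources precisely this to \cite[Proposition 7.2]{toricartin2} rather than proving it, and your sketch should either do the same or supply those two verifications. Your explicit treatment of the ``moreover'' clause via uniqueness of good moduli spaces, and your detailed check that every unstable cone lands in the zero cone (a linear subspace inside a strictly convex cone is trivial), fill in details the paper leaves implicit.
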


\begin{proof}
  If the two conditions are satisfied, then $X_{\Sigma'}=\X_{\Sigma',\id\colon N'\to N'}$ is a toric variety, and by Theorem \ref{thm:main-gms}, $\X_{\Phi,\phi}$ is a good moduli space morphism.

  Conversely, if $\X_{\Sigma,\beta}$ has a variety as a good moduli space, it must be a toric morphism to a toric variety by \cite[Proposition 7.2]{toricartin2}. Suppose $f\colon\X_{\Sigma,\beta}\to X_{\Sigma''}$ is a toric good moduli space morphism to a toric variety, where $\Sigma''$ is a fan on a lattice $N''$. By Theorem \ref{thm:morphisms-come-from-fans}, there are morphisms of stacky fans $(\Phi_0,\phi_0)\colon (\Sigma_0,\beta_0)\to (\Sigma,\beta)$ and $(\Phi'',\phi'')\colon (\Sigma_0,\beta_0)\to (\Sigma'',\id_{N''})$ so that $\X_{(\Phi_0,\phi_0)}$ is an isomorphism and $f=\X_{(\Phi'',\phi'')}\circ \X_{(\Phi_0,\phi_0)}^{-1}$. By Theorem \ref{thm:isomorphisms}, $(\Sigma,\beta)$ satisfies conditions (i) and (ii) if and only if $(\Sigma_0,\beta_0)$ does. We may therefore  assume that the good moduli space morphism is induced by a morphism of non-strict fans $(\Psi,\psi)\colon (\Sigma,\beta)\to (\Sigma'',\id_{N''})$.

  We now apply Theorem \ref{thm:main-gms} to verify (i): by (1) and (2), $\tau=\Psi^{-1}(0)$ is the unique maximal unstable cone; by (3) and (4), $\psi$ is surjective with kernel $\sat_N(\beta(\tau^\gp))$ (here we are using the fact that $N''$ is torsion-free). Therefore $\psi=\phi$ and $\Psi=\psi\circ\beta=\phi\circ\beta=\Phi$. Finally, by Theorem \ref{thm:main-gms}(1), condition (ii) is satisfied.
\end{proof}

\begin{warning}\label{warn:variety-gms}
  Corollary \ref{cor:main-gms-tv} provides a criterion for a non-strict toric stack to have a \emph{variety} as a good moduli space. However, there are toric stacks which have good moduli spaces that are not varieties: the non-separated line (Example \ref{eg:double-origin}) is a scheme, so is its own good moduli space, but it fails the criteria of the corollary (see Example \ref{eg:gms-non-separated-line}).
\end{warning}

\begin{remark}\label{rmk:alg-space-gms}
  A slight variation of Corollary \ref{cor:main-gms-tv} gives a criterion for a toric stack to have a good moduli space (which is necessarily a scheme by \cite[Proposition 7.2]{toricartin2}) and a construction of the good moduli space if it exists. We describe the construction and criterion here and leave the proof as an exercise for the reader.

  Given $(\Sigma,\beta\colon L\to N)$, suppose $\tau$ is the unique maximal unstable cone. Then $\tau$ is also the unique maximal unstable cone in the stacky fan $(\Sigma,\Phi\colon L\to L')$, where $L'=L/\tau^\gp$. Let $\Sigma'$ be the induced fan on $L'$, as described in Notation \ref{not:fan-of-gms} (where we use $\Phi$ in place of $\beta$). Let $N'=N/\sat_N(\beta(\tau^\gp))$, and let $\beta'\colon L'\to N'$ be the map induced by $\beta$.

  Consider the following diagram of toric monoids indexed by $\Sigma'$: for each $\sigma'\in\Sigma'$, we have the toric monoid $\beta'(\sigma')\cap N'$, and for every face relation in $\Sigma'$, we have the corresponding inclusion of monoids. This is a tight diagram of toric monoids (see \cite[\S2]{toricartin2}), so it is witnessed by a fan $\Sigma''$ on some colimit lattice $L''$, which has a map $\beta''\colon L''\to N'$ \cite[Corollary 2.12]{toricartin2}.

  A toric stack $\X_{\Sigma,\beta}$ has a good moduli space if and only if (i) $\Sigma$ has a unique maximal unstable cone $\tau$, and (ii) the morphism $\Phi$ described above induces a map of fans $\Sigma\to \Sigma'$. In this case, the good moduli space of $\X_{\Sigma,\beta}$ is the toric stack $\X_{\Sigma'',\beta''}$.
\end{remark}

\subsection{Proof of Theorem \ref{thm:main-gms}}
\label{subsec:local-gms}

We prove Theorem \ref{thm:main-gms} through several lemmas and propositions. 

\begin{lemma}\label{lem:product-of-gms-is-gms}
 For $i=0,1$, let $(\Phi_i,\phi_i)$ be a morphism of non-strict stacky fans. Then $(\Phi_0\times \Phi_1,\phi_0\times \phi_1)$ induces a good moduli space morphism of non-strict toric stacks if and only if each $(\Phi_i,\phi_i)$ does.
\end{lemma}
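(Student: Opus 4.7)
The plan is to deduce this lemma directly from Proposition~\ref{prop:product-of-property-P}, with the property $\mathbf P$ taken to be ``is a good moduli space morphism.'' Since the constructions $(\Sigma,\beta)\mapsto \X_{\Sigma,\beta}$ and $(\Phi,\phi)\mapsto \X_{(\Phi,\phi)}$ commute with products (as already used in the proof of Proposition~\ref{prop:product-of-property-P}), the morphism induced by $(\Phi_0\times\Phi_1,\phi_0\times\phi_1)$ is $\X_{(\Phi_0,\phi_0)}\times \X_{(\Phi_1,\phi_1)}$. Thus once I verify that good moduli space morphisms are stable under composition and under base change, Proposition~\ref{prop:product-of-property-P} will give the lemma.

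First I would check stability under composition, which is immediate from Definition~\ref{def:gms}: if $f\colon \X\to\Y$ and $g\colon\Y\to\Z$ are both Stein and cohomologically affine, then $(g\circ f)_*\O_\X = g_*f_*\O_\X = g_*\O_\Y = \O_\Z$, so $g\circ f$ is Stein, and the composition of exact functors is exact, so $g\circ f$ is cohomologically affine. Second, I would cite \cite[Proposition~4.7(i)]{Alper:good}, which establishes that good moduli space morphisms are stable under \emph{arbitrary} base change. These two inputs are precisely the hypotheses of Proposition~\ref{prop:product-of-property-P}, so the result follows.

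The only point that requires any care is that the converse direction of Proposition~\ref{prop:product-of-property-P} is obtained by base-changing the product morphism along a $k$-point of one factor. A $k$-point is typically a closed immersion and in particular not flat, so the weaker statement that good moduli space morphisms are preserved by flat base change would be insufficient; the full strength of stability under arbitrary base change from \cite{Alper:good} is what makes the argument go through. Beyond this, there is no substantive obstacle, and in fact the lemma should be viewed as one of the intended applications of the formal product machinery of Proposition~\ref{prop:product-of-property-P}.
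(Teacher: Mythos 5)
Your proposal is correct and follows essentially the same route as the paper, which likewise deduces the lemma from Proposition~\ref{prop:product-of-property-P} by noting that good moduli space morphisms are stable under composition (immediate from Definition~\ref{def:gms}) and under arbitrary base change, citing \cite[Proposition 4.6(i)]{Alper:good}. The only discrepancy is your citation of ``Proposition 4.7(i)'' rather than 4.6(i) of \cite{Alper:good}, which does not affect the argument.
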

\begin{proof}
 Good moduli spaces are stable under composition (this follows quickly from the definition) and base change \cite[Proposition 4.7(i)]{Alper:good}, so the result follows from Proposition \ref{prop:product-of-property-P}.
\end{proof}

\begin{lemma}\label{l:gms-comp}
  If $f:\X\to\Y$ and $g:\Y\to\Z$ are morphisms of algebraic stacks and $f$ is a good moduli space morphism, then
  \begin{enumerate}
    \item\label{item:stein-composition} $g$ is Stein if and only if $g\circ f$ is, and
    \item\label{item:coh-affine-composition} $g$ is cohomologically affine if and only if $g\circ f$ is as well.
  \end{enumerate}
\end{lemma}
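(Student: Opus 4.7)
The plan is to exploit the fact that for a good moduli space morphism $f$, the counit $f_*f^* \to \id$ of the adjunction $(f^*, f_*)$ on quasi-coherent sheaves is a natural isomorphism. Indeed, by the projection formula (valid since $f$ is quasi-compact and quasi-separated) together with the Stein condition, one has $f_*f^*\F \cong \F\otimes_{\O_\Y} f_*\O_\X \cong \F$ naturally.

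For part (\ref{item:stein-composition}), the natural map $\O_\Z \to (g\circ f)_*\O_\X$ factors as $\O_\Z \to g_*\O_\Y \to g_*f_*\O_\X$, where the second arrow is $g_*$ applied to the Stein isomorphism $\O_\Y \xrightarrow{\sim} f_*\O_\X$, hence is itself an isomorphism. Therefore the first arrow is an isomorphism if and only if the composite is; that is, $g$ is Stein if and only if $g\circ f$ is.

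For part (\ref{item:coh-affine-composition}), the ``only if'' direction is immediate, since $(g\circ f)_* = g_*\circ f_*$ is a composition of exact functors. For ``if,'' given a short exact sequence $\Sigma\colon 0\to \F'\to \F\to \F''\to 0$ in $\qcoh(\Y)$, I would apply the right-exact functor $f^*$ to obtain a complex $f^*\F' \xrightarrow{a} f^*\F \xrightarrow{b} f^*\F'' \to 0$ that is exact at $f^*\F$ and $f^*\F''$, and split it into two short exact sequences $0\to K\to f^*\F'\to \im a\to 0$ and $0\to \im a \to f^*\F\to f^*\F''\to 0$, where $K = \ker a$. Applying the exact functor $(g\circ f)_* = g_*\circ f_*$ to both, and using the identification $(g\circ f)_*\circ f^* \cong g_*\circ f_*\circ f^* \cong g_*$, yields a four-term exact sequence involving $(g\circ f)_*K$, $g_*\F'$, $g_*\F$, and $g_*\F''$. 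The exactness of $f_*$ forces $f_*K = \ker(f_*a) = 0$, since $f_*a$ corresponds to the injection $\F'\hookrightarrow \F$ under the natural isomorphism $f_*f^*\cong \id$; hence $(g\circ f)_*K = g_*(f_*K) = 0$, and splicing shows that $g_*\Sigma$ is exact. The main obstacle is precisely that $f^*$ is only right exact, so one cannot simply apply $(g\circ f)_*$ to $f^*\Sigma$ as if it were short exact; the argument turns on carrying the kernel $K$ along and using exactness of $f_*$ to make it vanish.
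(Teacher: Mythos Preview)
Your argument is correct. Part~(\ref{item:stein-composition}) is essentially identical to the paper's. For part~(\ref{item:coh-affine-composition}) you and the paper both rely on the same key fact, namely that the natural map $\F\to f_*f^*\F$ is an isomorphism for every quasi-coherent $\F$ on $\Y$ (this is \cite[Proposition~4.5]{Alper:good}; your parenthetical justification via the projection formula should invoke cohomological affineness of $f$ rather than merely qcqs, since the underived projection formula for arbitrary quasi-coherent sheaves needs exactness of $f_*$). Where the approaches diverge is in how this fact is used: the paper argues at the derived level, computing
\[
  Rg_*\F = Rg_*\, Rf_* f^*\F = R(g\circ f)_* f^*\F = (g\circ f)_* f^*\F = g_*f_*f^*\F = g_*\F,
\]
which immediately gives $Rg_*=g_*$. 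You instead stay at the underived level, feeding a short exact sequence through $f^*$, splitting off the kernel $K$ introduced by the failure of left exactness, and then killing $f_*K$ using exactness of $f_*$ together with naturality of $f_*f^*\cong\id$. Your route is more elementary in that it avoids derived functors entirely, at the cost of a bit more bookkeeping; the paper's route is a one-line derived computation but presumes comfort with $Rf_*$ and the composition $Rg_*\circ Rf_*\cong R(g\circ f)_*$.
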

\begin{proof}
  (\ref{item:stein-composition}) Stein morphisms are clearly stable under composition. For the converse, note that $g_*\O_\Y=g_*f_*\O_\X=(g\circ f)_*\O_\X=\O_\Z$, so $g$ is Stein.

  (\ref{item:coh-affine-composition}) Cohomologically affine morphisms are stable under composition. For the converse, first note that for any quasi-coherent sheaf $\F$ on $\Y$, we have $f_*f^*\F=\F$ by \cite[Proposition 4.5]{Alper:good}. Since $Rf_*=f_*$ and $R(g\circ f)_*=(g\circ f)_*$, we have
  \[
   Rg_*\F=Rg_* Rf_*f^*\F=R(g\circ f)_*f^*\F=(g\circ f)_*f^*\F=g_*(f_*f^*\F)=g_*\F.\qedhere
  \]
\end{proof}

Throughout the rest of this subsection (until the proof of Theorem \ref{thm:main-gms}), we use the following setup. We have a morphism of non-strict stacky fans $(\Phi,\phi)\colon (\sigma,\beta\colon L\to N)\to (\sigma',\beta'\colon L'\to N)$, where $\sigma$ is a single cone on $L$, and $\sigma'=\Phi(\sigma)$:
\[\xymatrix{
 L \ar[r]^\Phi\ar[d]_\beta \POS p+(0,.7) *+{\sigma}="s" & L'\ar[d]^{\beta'} \POS p+(0,.7) *+{\sigma'} \ar@{<<-} "s"\\
 N\ar[r]^\phi & N'
}\]

\begin{lemma}[``Characterization when target is a toric variety'']\label{lem:quot-by-kernel-is-gms2}
  If $\beta$ has finite cokernel and $\beta'$ is an isomorphism, the induced map $\X_{\sigma,\beta}\to X_{\sigma'}$ is a good moduli space morphism if and only if every $\lambda\in \hom(N,\ZZ)$ which is non-negative on $\sigma$ (i.e.~$\lambda\circ\beta$ is non-negative on $\sigma$) is of the form $\phi^*(\lambda')$ for a unique $\lambda'\in \hom(N',\ZZ)$ which is non-negative on $\sigma'$.
\end{lemma}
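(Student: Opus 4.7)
The plan is to factor the morphism $\X_{\sigma,\beta}=[X_\sigma/G_\beta]\to X_{\sigma'}$ through the affine scheme $\spec R^{G_\beta}$, where $R=k[\sigma^\vee\cap L^*]$ is the coordinate ring of $X_\sigma$. Since $G_\beta$ is diagonalizable and we work over an algebraically closed field of characteristic $0$, it is linearly reductive, so $[X_\sigma/G_\beta]\to \spec R^{G_\beta}$ is automatically a good moduli space morphism (by \cite{Alper:good}). By Lemma \ref{l:gms-comp}, the composite $\X_{\sigma,\beta}\to X_{\sigma'}$ is then a good moduli space morphism if and only if the residual map $\spec R^{G_\beta}\to X_{\sigma'}$ is cohomologically affine and Stein. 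Cohomological affineness is automatic for morphisms of affine schemes, and between affine schemes Stein is equivalent to being an isomorphism (since the map on global sections then determines the sheaf map on distinguished opens). So the lemma reduces to a question about ring maps.

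Next, I would compute $R^{G_\beta}$ combinatorially. Because $\cok\beta$ is finite, $\beta^*\colon N^*\to L^*$ is injective with cokernel $D(G_\beta)$, and the $L^*$-grading on $R$ refines to a $D(G_\beta)$-grading whose degree-$0$ part is $R^{G_\beta}$. Explicitly, a monomial $x^u$ with $u\in \sigma^\vee\cap L^*$ is $G_\beta$-invariant exactly when $u\in \beta^*(N^*)$, and writing $u=\beta^*(v)$ with $v\in N^*$, the condition $u\in \sigma^\vee$ translates into $v|_{\beta(\sigma)}\geq 0$. Hence $R^{G_\beta}=k[P]$, where $P=\{v\in N^*\mid v \text{ is non-negative on } \beta(\sigma)\}$ is exactly the monoid of $\lambda$'s appearing in the statement.

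Finally, since $\beta'$ is an isomorphism, I can identify $L'$ with $N'$ to write $X_{\sigma'}=\spec k[(\sigma')^\vee\cap (N')^*]$, and the ring map $k[(\sigma')^\vee\cap(N')^*]\to k[P]$ inducing $\spec R^{G_\beta}\to X_{\sigma'}$ is the monoid-algebra map coming from $\phi^*\colon (N')^*\to N^*$; well-definedness uses $\phi\circ\beta=\beta'\circ\Phi$ together with $\sigma'=\Phi(\sigma)$, so that $\phi(\beta(\sigma))=\beta'(\sigma')$ and the two non-negativity conditions line up. This ring map is an isomorphism of $k$-algebras precisely when $\phi^*$ restricts to a bijection $(\sigma')^\vee\cap(N')^*\to P$; surjectivity of this restriction is the existence of $\lambda'$, and injectivity is its uniqueness. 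This yields the stated equivalence.

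The only delicate point I anticipate is bookkeeping: tracking which cone lives in which dual lattice, and verifying that the identification $\phi(\beta(\sigma))=\beta'(\sigma')$ (together with $\beta'$ being an isomorphism) is what converts ``non-negative on $\beta(\sigma)\subseteq N$'' into ``non-negative on $\sigma'\subseteq L'\cong N'$'' under $\phi^*$. Once that correspondence is set up, the equivalence is a direct unwinding of definitions.
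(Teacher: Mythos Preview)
Your strategy matches the paper's: reduce to showing that $k[\sigma'^\vee\cap L'^*]\to k[\sigma^\vee\cap L^*]^{G_\beta}$ is an isomorphism, identify the invariants as the monoid algebra on $\sigma^\vee\cap\beta^*(N^*)$, and then compare monoids via $\phi^*$.

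The one gap is your assertion that $\cok(\beta^*\colon N^*\to L^*)=D(G_\beta)$. Since $(\sigma,\beta)$ is a \emph{non-strict} stacky fan, $N$ may have torsion, and then the grading map $L^*\to D(G_\beta)$ need not be surjective (e.g.\ $L=0$, $N=\ZZ/2$ gives $\cok\beta^*=0$ but $G_\beta=\mu_2$). What you actually need, and what survives, is that the \emph{kernel} of $L^*\to D(G_\beta)$ is $\beta^*(N^*)$. The paper establishes exactly this by choosing a free resolution $0\to\ZZ^s\xrightarrow{Q}\ZZ^r\to N\to 0$ and a lift $B\colon L\to\ZZ^r$ of $\beta$, computing $D(G_\beta^1)=\cok\bigl((B^*,Q^*)\colon(\ZZ^r)^*\to L^*\oplus(\ZZ^s)^*\bigr)$, and observing that $u\in L^*$ dies in this cokernel exactly when $u=B^*(\lambda)$ for some $\lambda\in\ker Q^*=N^*$. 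With this fix your argument goes through unchanged.
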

\begin{proof}
 Since $X_\sigma=\spec k[\sigma^\vee\cap L^*]$ and $X_{\sigma'}=\spec k[\sigma'^\vee\cap L'^*]$ are affine and $G_\beta$ is linearly reductive, the induced map $\X_{(\Phi,\phi)}$ is a good moduli space morphism if and only if $k[\sigma'^\vee\cap L'^*]\to k[\sigma^\vee\cap L^*]$ is the inclusion of the ring of invariants $k[\sigma^\vee\cap L^*]^{G_\beta}= k[\sigma^\vee\cap L^*]^{G^1_\beta}$ (this last equality holds because $G^0_\beta$ acts trivially on $X_\sigma$). Letting $\pi\colon L^*\to D(G_\beta^1)$ be the map defining the action of $G_\beta^1$ on $T_L$, this says that $\X_{(\Phi,\phi)}$ is a good moduli space morphism if and only if $\sigma^\vee \cap \ker\pi =\Phi^*(\sigma'^\vee\cap L'^*)$, and $\Phi^*\colon \sigma'^\vee\cap L'^*\to \sigma^\vee\cap L^*$ is injective. Consider the following commutative diagram. Note that $\beta^*$ is injective since $\cok\beta$ is finite.
  \[\xymatrix{
    D(G_\beta^1) & L^* \ar[l]_-\pi \ar@{<-_)}[d]^{\beta^*} & L'^* \ar@{<-}[d]_\wr^{\beta'^*} \ar[l]_{\Phi^*}\\
    & N^* \ar[ul]^0  & N'^* \ar[l]_{\phi^*}
  }\]

  Suppose first that $\X_{(\Phi,\phi)}$ is a good moduli space morphism. Given $\lambda\in N^*$ which is non-negative on $\sigma$, we have that $\beta^*(\lambda)\in \sigma^\vee\cap \ker(\pi) = \Phi(\sigma'^\vee\cap L'^*)$ and $\Phi^*|_{\sigma'^\vee\cap L'^*}$ is injective, so there is a unique $\lambda'\in N'^*$ which is non-negative on $\sigma'$ so that $\Phi^*(\beta'^*(\lambda'))=\beta^*(\phi^*(\lambda'))=\beta^*(\lambda)$. Since $\beta^*$ is injective, this proves the desired result.

  For the converse, choose a free resolution
  \[
  0\to\ZZ^s\xrightarrow{Q}\ZZ^r\to N\to 0
  \]
  and a lift $B\colon L\to\ZZ^r$ of $\beta$. Then $D(G_\beta^1)$ is the cokernel of $(B^*,Q^*)\colon (\ZZ^r)^*\to L^*\oplus (\ZZ^s)^*$, so the sequence below on the left is exact.
  \[\xymatrix@R-2.5pc{
     & L^*\ar[dr]^-\pi\\
    (\ZZ^r)^*\ar[ur]^-{B^*}\ar[dr]_-{Q^*} & \oplus & D(G_\beta^1)\ar[r] & 0\\
     & (\ZZ^s)^* \ar[ur]
  }\qquad\qquad
  \raisebox{2.5pc}{\xymatrix{
    & & L^*\\
    0\ar[r] & N^*\ar[r]\ar@{^(->}[ur]^{\beta^*} & (\ZZ^r)^*\ar[r]_{Q^*} \ar[u]_{B^*} & (\ZZ^s)^*
  }}\]
  If $\mu\in\ker\pi$, then there exists $\lambda\in (\ZZ^r)^*$ such that $\mu=B^*(\lambda)$ and $0=Q^*(\lambda)$. As the bottom row of the right-hand diagram is exact, we have that $\lambda\in N^*$ and $\mu=\beta^*(\lambda)$. Note that $\lambda$ is unique as $\beta^*$ is injective. If $\mu$ is non-negative on $\sigma$, then by hypothesis, $\lambda=\phi^*(\lambda')$ for a unique $\lambda'\in N'^*$.  To conclude the proof, note that since $\Phi(\sigma)=\sigma'$, an element $\mu'\in N'^*$ is non-negative on $\sigma'$ if and only if $\Phi^*(\mu')$ is non-negative on $\sigma$.
\end{proof}

\begin{lemma}[``Isomorphism on tori implies GMS'']\label{lem:isom-on-tori-is-gms}
 Suppose $\phi$ is an isomorphism and $\cok\beta$ is finite. Then $\X_{\sigma,\beta}\to \X_{\sigma',\beta'}$ is a good moduli space morphism.
\end{lemma}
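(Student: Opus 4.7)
The plan is to verify the two good moduli space conditions—Stein and cohomological affineness—after base change along the atlas $X_{\sigma'}\to \X_{\sigma',\beta'}$, using that good moduli space morphisms are fpqc-local on the target. The key preliminary computation is to identify the pullback $V := \X_{\sigma,\beta}\times_{\X_{\sigma',\beta'}}X_{\sigma'}$. Since the composite $X_\sigma\to\X_{\sigma,\beta}\to\X_{\sigma',\beta'}$ factors through $X_{\sigma'}$ via the toric morphism $\Phi$, I expect to obtain $V\cong [X_\sigma\times G_{\beta'}/G_\beta]$, with $G_\beta$ acting diagonally: via its embedding in $T_L$ on $X_\sigma$, and via $g\cdot h = h\cdot \iota(g)^{-1}$ on the torsor factor $G_{\beta'}$, where $\iota\colon G_\beta\to G_{\beta'}$ is the map induced by $T_\Phi$ on kernels of $T_\beta, T_{\beta'}$. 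The projection $V\to X_{\sigma'}$ will then be the $G_\beta$-invariant map $(x,h)\mapsto h\cdot\Phi(x)$.

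Cohomological affineness should follow immediately: $V$ is a quotient of an affine scheme by the diagonalizable (hence linearly reductive) group $G_\beta$, so $V$ is cohomologically affine, and since $X_{\sigma'}$ is affine, so is the morphism $V\to X_{\sigma'}$.

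The main obstacle will be the Stein condition, which I would prove by a direct invariant-theoretic calculation. Setting $A = k[\sigma^\vee\cap L^*]$ and $A' = k[\sigma'^\vee\cap L'^*]$, the action map induces $A'\to A\otimes k[D(G_{\beta'})]$ sending $x^\lambda$ to $x^{\Phi^*\lambda}\otimes y^{\pi'(\lambda)}$, where $\pi'\colon L'^*\to D(G_{\beta'})$ is the character projection. A character computation shows $G_\beta$ acts on $x^m\otimes y^{d'}$ with weight $\pi(m) - \iota^*(d')\in D(G_\beta)$, where $\iota^*\colon D(G_{\beta'})\to D(G_\beta)$ is the map $[l']\mapsto [\Phi^* l']$. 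So Stein-ness reduces to the combinatorial claim that every pair $(m, d')\in (\sigma^\vee\cap L^*)\times D(G_{\beta'})$ satisfying $\pi(m) = \iota^*(d')$ lifts uniquely to some $\lambda\in \sigma'^\vee\cap L'^*$ with $\Phi^*\lambda = m$ and $\pi'(\lambda) = d'$.

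To establish this lifting, I would first pick any preimage $\tilde{d}\in L'^*$ of $d'$; the invariance condition then forces $m - \Phi^*\tilde{d}\in \ker\pi = \beta^*(N^*) = \Phi^*(\beta'^*(N^*))$, so one may correct $\tilde{d}$ by an element of $\beta'^*(N^*)$ to produce the desired $\lambda$, with $\Phi(\sigma) = \sigma'$ placing $\lambda$ in $\sigma'^\vee$. Uniqueness reduces to showing $\ker\Phi^*\cap \beta'^*(N^*) = 0$, which follows directly from the injectivity of $\beta^* = \Phi^*\circ\beta'^*$ (equivalent to finiteness of $\cok\beta$ by Remark~\ref{rmk:cok-finite-iff-dual-injective}). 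With the Stein condition verified, fpqc descent completes the proof.
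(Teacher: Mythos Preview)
Your approach is correct and genuinely different from the paper's. The paper does not base change along the atlas and compute invariants directly; instead it first reduces to the case where $\cok\Phi$ is finite (factoring through the saturation of $\Phi(L)$ in $L'$ and invoking Lemma~\ref{lem:adding-rank-to-L}), then uses the short exact sequence $0\to G_\Phi\to G_\beta\to G_{\beta'}\to 0$ of Lemma~\ref{lem:triangle=>ses-of-G_betas} to rewrite $\X_{\sigma,\beta}=\bigl[[X_\sigma/G_\Phi]/G_{\beta'}\bigr]$, and applies Lemma~\ref{lem:quot-by-kernel-is-gms2} to see that $[X_\sigma/G_\Phi]\to X_{\sigma'}$ is a good moduli space; quotienting this by $G_{\beta'}$ and using smooth-local descent finishes. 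Your route bypasses the structural Appendix~A lemmas and Lemma~\ref{lem:quot-by-kernel-is-gms2} in favor of an explicit invariant computation on the pulled-back torsor, which is more self-contained but more computational.

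One small gap to flag in the non-strict setting: when $N'$ has torsion the projection $\pi'\colon L'^*\to D(G_{\beta'})$ need not be surjective, so your sentence ``pick any preimage $\tilde d\in L'^*$ of $d'$'' requires justification. The fix is short: the long exact sequence for $C(\beta')^*$ identifies $D(G_{\beta'})/\im\pi'\cong\ext^1(N',\ZZ)$, and likewise for $\beta$; under these identifications $\iota^*$ induces $\ext^1(\phi,\ZZ)$, which is an isomorphism since $\phi$ is. Hence $\iota^*(d')\in\im\pi$ (which is exactly your invariance condition $\pi(m)=\iota^*(d')$) forces $d'\in\im\pi'$, and the rest of your lifting argument goes through unchanged. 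In the strict case ($N'$ a lattice) $\pi'$ is already surjective and there is nothing to check.
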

\begin{proof}
  First we consider the case when $\cok\Phi$ is finite. By Lemma \ref{lem:triangle=>ses-of-G_betas}, $G_\beta$ is an extension of $G_{\beta'}$ by $G_\Phi$. The induced map is then $[X_\sigma/G_\beta]=\bigl[[X_\sigma/G_\Phi]/G_{\beta'}\bigr]\to [X_{\sigma'}/G_{\beta'}]$. By Lemma \ref{lem:quot-by-kernel-is-gms2}, $[X_\sigma/G_\Phi]\to X_{\sigma'}$ is a good moduli space morphism. Since the property of being a good moduli space morphism can be checked locally in the smooth topology (even in the fpqc topology, \cite[Proposition 4.7]{Alper:good}), $[X_\sigma/G_\beta]\to [X_{\sigma'}/G_{\beta'}]$ is a good moduli space morphism.

  In general, we reduce to the case where $\cok\Phi$ is finite. Let $L''$ be the saturation of $\Phi(L)$ in $L'$, let $\beta''$ be the restriction of $\beta'$ to $L''$, and let $\sigma''$ be the cone $\sigma'$, regarded as a fan on $L''$. By assumption (i.e.~the case where $\cok\Phi$ is finite), the induced morphism $\X_{\sigma,\beta}\to \X_{\sigma'',\beta''}$ is a good moduli space morphism. Note that since $\cok\beta$ is finite, so is $\cok\beta''$, so by Lemma \ref{lem:adding-rank-to-L} the induced morphism $\X_{\sigma'',\beta''}\to \X_{\sigma',\beta'}$ is an isomorphism. Therefore the composition $\X_{\sigma,\beta}\to \X_{\sigma',\beta'}$ is a good moduli space morphism.
\end{proof}

\begin{lemma}[``Quotient of a GMS is a GMS'']\label{lem:quot-of-gms}
 Suppose $\beta$ (resp.~$\beta'$) factors as $L\xrightarrow{\beta_1} N_0\xrightarrow{\beta_0} N$ (resp.~$L'\xrightarrow{\beta_1'} N_0'\xrightarrow{\beta_0'} N'$). Suppose $\phi_0\colon N_0\to N_0'$ makes the following diagram commute:
 \[\xymatrix{
  L \ar[r]^\Phi\ar[d]_{\beta_0} \POS p+(0,.7) *+{\sigma}="s" & L'\ar[d]^{\beta_0'} \POS p+(0,.7) *+{\sigma'} \ar@{<<-} "s"\\
  N_0\ar[r]^{\phi_0}\ar[d]_{\beta_1} & N_0' \ar[d]^{\beta_1'}\\
  N\ar[r]^\phi & N'
 }\]
 Suppose
 \begin{enumerate}
  \item $\cok\beta_0$ and $\cok\beta_0'$ are finite,
  \item $\ker\beta_1$ and $\ker\beta_1'$ are free,
  \item\label{item:quotient-of-gms-kernels} $\phi_0$ induces an isomorphism between $\ker\beta_1$ and $\ker\beta_1'$, and
  \item\label{item:quotient-of-gms-cokernels} $\phi$ induces an isomorphism between $\cok\beta_1$ and $\cok\beta_1'$.
 \end{enumerate}
 Then $\X_{\sigma,\beta}\to \X_{\sigma',\beta'}$ is a good moduli space morphism if and only if $\X_{\sigma,\beta_0}\to \X_{\sigma',\beta_0'}$ is as well.
\end{lemma}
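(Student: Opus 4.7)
The plan is to exhibit the morphism $\X_{\sigma,\beta}\to\X_{\sigma',\beta'}$ as the quotient by a common diagonalizable group $H$ of the morphism $\X_{\sigma,\beta_0}\to\X_{\sigma',\beta_0'}$, and then transfer the good moduli space property through base change and fpqc descent of Alper's.

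First I would use the factorization $\beta=\beta_0\circ\beta_1$ to obtain a distinguished triangle of mapping cones $C(\beta_1)\to C(\beta)\to C(\beta_0)\to C(\beta_1)[1]$, and apply the relevant version of Lemma \ref{lem:triangle=>ses-of-G_betas} (whose hypotheses are guaranteed by conditions (1) and (2)) to extract a short exact sequence of diagonalizable groups
\[
0\to G_{\beta_0}\to G_\beta\to G_{\beta_1}\to 0,
\]
together with a parallel sequence $0\to G_{\beta_0'}\to G_{\beta'}\to G_{\beta_1'}\to 0$ naturally compatible with the maps induced by $\Phi$, $\phi_0$, and $\phi$. Conditions (3) and (4) say exactly that the induced morphism of two-term complexes $C(\beta_1)\to C(\beta_1')$ is a quasi-isomorphism (these complexes have no cohomology outside degrees $-1$ and $0$), so after applying $R\hom(-,\ZZ)$ and Cartier duality the induced morphism $G_{\beta_1}\to G_{\beta_1'}$ is an isomorphism of diagonalizable groups. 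Denote the common group by $H$.

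The two short exact sequences now yield presentations $\X_{\sigma,\beta}\cong[\X_{\sigma,\beta_0}/H]$ and $\X_{\sigma',\beta'}\cong[\X_{\sigma',\beta_0'}/H]$, compatible in the sense that $\X_{\sigma,\beta}\to\X_{\sigma',\beta'}$ is obtained from $\X_{\sigma,\beta_0}\to\X_{\sigma',\beta_0'}$ by passing to $H$-quotients. Assembling this into the Cartesian square
\[\xymatrix{
\X_{\sigma,\beta_0}\ar[r]\ar[d] & \X_{\sigma',\beta_0'}\ar[d]\\
\X_{\sigma,\beta}\ar[r] & \X_{\sigma',\beta'}
}\]
whose vertical arrows are $H$-torsors, and hence fpqc covers, stability of good moduli space morphisms under base change \cite[Proposition 4.6]{Alper:good} supplies the implication (bottom GMS) $\Rightarrow$ (top GMS), while the fpqc-local nature of being a good moduli space morphism on the target (same reference) supplies the reverse.

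The main obstacle I foresee lies in the first step: deriving the short exact sequences with their correct directions and checking full functoriality under Cartier duality, so that the canonical isomorphism $G_{\beta_1}\cong G_{\beta_1'}$ is truly compatible with the inclusions $G_{\beta_0}\hookrightarrow G_\beta$ and $G_{\beta_0'}\hookrightarrow G_{\beta'}$ in such a way that the $H$-quotient of $\X_{\sigma,\beta_0}\to\X_{\sigma',\beta_0'}$ recovers $\X_{\sigma,\beta}\to\X_{\sigma',\beta'}$ on the nose. Once this bookkeeping is complete, the remaining descent argument is formal.
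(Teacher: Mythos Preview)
Your proposal is correct and matches the paper's argument essentially step for step: invoke Lemma~\ref{lem:triangle=>ses-of-G_betas} via conditions (1)--(2) to obtain the two short exact sequences $0\to G_{\beta_0}\to G_\beta\to G_{\beta_1}\to 0$ and its primed analogue, use conditions (3)--(4) to see that $C(\beta_1)\to C(\beta_1')$ is a quasi-isomorphism so $G_{\beta_1}\cong G_{\beta_1'}$, then pass the good moduli space property through the common torsor using \cite[Proposition~4.6]{Alper:good}. One nit: your distinguished triangle should read $C(\beta_0)\to C(\beta)\to C(\beta_1)$ (the statement's text and diagram disagree on which of $\beta_0,\beta_1$ comes first; the diagram and the numbered conditions are the ones to trust), though the short exact sequence you wrote down is already the correct one.
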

\begin{proof}
 The first two conditions, together with Lemma \ref{lem:triangle=>ses-of-G_betas} imply that the rows in the following diagram are exact:
 \[\xymatrix{
  0\ar[r] & G_{\beta_0}\ar[r]\ar[d] & G_\beta\ar[r]\ar[d] & G_{\beta_1}\ar[r]\ar[d]^\wr & 0\\
  0\ar[r] & G_{\beta_0'}\ar[r] & G_{\beta'}\ar[r] & G_{\beta_1'}\ar[r] & 0
 }\]
 The last two conditions imply that the induced morphism $C(\beta_1)\to C(\beta_1')$ is a quasi-isomorphism, which implies that the rightmost vertical map is an isomorphism between $G_{\beta_1}$ and $G_{\beta_1'}$. Therefore, the morphism $\X_{\sigma,\beta} \cong [\X_{\sigma,\beta_0}/G_{\beta_1}]\to [\X_{\sigma',\beta_0'}/G_{\beta_1'}]\cong \X_{\sigma',\beta'}$ is the quotient of the morphism $\X_{\sigma,\beta_0}\to \X_{\sigma',\beta_0'}$ by $G_{\beta_1}\cong G_{\beta_1'}$. Since the property of being a good moduli space morphism can be checked locally on the base in the smooth topology \cite[Proposition 4.7]{Alper:good}, $\X_{\sigma,\beta}\to\X_{\sigma',\beta'}$ is a good moduli space morphism if and only if $\X_{\sigma,\beta_0}\to\X_{\sigma',\beta_0'}$ is as well.
\end{proof}


\begin{lemma}[``Removing trivial generic stackiness is a GMS'']\label{lem:removing-trivial-stackiness-gms}
 Suppose $N=N'\oplus N_0$ and that $\beta$ factors through $N'$. Let $L'=L$ and $\sigma'=\sigma$. Then $[X_\sigma/G_{\beta}]\to [X_{\sigma'}/G_{\beta'}]$ is a good moduli space morphism.
\end{lemma}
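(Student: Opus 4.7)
The plan is to realize $(\Phi,\phi)$ as a product of morphisms of non-strict stacky fans and then apply Lemma \ref{lem:product-of-gms-is-gms}. Since $L' = L$ and $\sigma' = \Phi(\sigma)$, I take $\Phi = \id_L$ and $\sigma' = \sigma$. The identity $\phi\circ\beta = \beta'\circ\Phi = \beta'$, together with the factorization $\beta = \iota\circ\beta'$ where $\iota\colon N'\hookrightarrow N'\oplus N_0$, forces $\phi\circ\iota = \id_{N'}$; without loss of generality (the induced morphism on stacks depends on $\phi$ only up to isomorphism) I take $\phi$ to be the projection $N'\oplus N_0\to N'$.

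Using the splitting $N = N'\oplus N_0$, I decompose the source stacky fan as the product
\[
 (\sigma,\beta\colon L\to N)\;=\;(\sigma,\beta'\colon L\to N')\times (\{0\},\, 0\colon 0\to N_0),
\]
where the product fan lives on $L\oplus 0 = L$ and $\beta'\oplus 0\colon L\to N'\oplus N_0$ agrees with $\beta$. Writing the target trivially as $(\sigma,\beta')\times (\{0\},\, 0\colon 0\to 0)$, one sees that $(\Phi,\phi)$ is precisely the product of the identity morphism of $(\sigma,\beta')$ with the morphism
\[
 (0,0)\colon(\{0\},\, 0\colon 0\to N_0)\longrightarrow (\{0\},\, 0\colon 0\to 0).
\]

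By Lemma \ref{lem:product-of-gms-is-gms}, it therefore suffices to show that each factor induces a good moduli space morphism. The first factor is the identity and so is automatic. The second factor induces the structure morphism $BG\to \spec k$, where $G := G_{0\colon 0\to N_0}$. Computing via a free resolution of $N_0$ plugged into Definition \ref{def:G_beta}, one identifies $G$ with the Cartier dual $D(N_0)$, which is diagonalizable and hence linearly reductive. Therefore $BG\to \spec k$ is cohomologically affine, and it is Stein because global sections of $\O_{BG}$ compute the $G$-invariants of $k$, which equal $k$ itself.

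The main (and rather modest) technical point is the identification of $G_{0\colon 0\to N_0}$ with $D(N_0)$ when $N_0$ has torsion; this is a routine computation with mapping cones once one chooses a free resolution of $N_0$. Everything else is bookkeeping: the compatibility of the functor $(\Sigma,\beta)\mapsto [X_\Sigma/G_\beta]$ with products of stacky fans (already invoked in Proposition \ref{prop:product-of-property-P}) ensures that the product decomposition above descends to an isomorphism $[X_\sigma/G_\beta]\cong [X_\sigma/G_{\beta'}]\times BG$ under which the morphism of the lemma is the projection onto the first factor.
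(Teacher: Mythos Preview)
Your proof is correct and is essentially the same as the paper's. Both arguments rest on the observation that $\beta=\beta'\oplus 0\colon L\oplus 0\to N'\oplus N_0$ splits $G_\beta$ as $G_{\beta'}\times G_0$ with $G_0=G_{0\to N_0}$ acting trivially on $X_\sigma$, reducing the claim to the fact that $BG_0\to\spec k$ is a good moduli space morphism. You package this via the product decomposition of stacky fans and invoke Lemma~\ref{lem:product-of-gms-is-gms}, whereas the paper writes the group splitting directly and passes to the $G_{\beta'}$-quotient of $[X_\sigma/G_0]\to X_\sigma$ using smooth-local descent of good moduli spaces; the content is the same. (Your handling of $\phi$ is slightly over-argued: the lemma's implicit $\phi$ is the projection $N'\oplus N_0\to N'$, so there is nothing to deduce.)
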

\begin{proof}
 We have that $\beta = \beta'\oplus 0\colon L\oplus 0\to N'\oplus N_0$, so $G_\beta = G_{\beta'}\oplus G_0$, where $G_0$ acts trivially on $X_\sigma$. The map $[X_\sigma/G_0]\to X_\sigma$ is then a good moduli space morphism, so $[X_\sigma/G_\beta] = \bigl[[X_\sigma/G_0]/G_{\beta'}\bigr]\to [X_\sigma/G_{\beta'}]$ is as well.
\end{proof}

\begin{proposition}\label{prop:crush-unstable-is-gms}
 Let $\tau$ be an unstable face of $\sigma$, and let $\tau^\gp$ denote $(L\cap \tau)^\gp$. Suppose $L'=L/\tau^\gp$ and $N'=N/\beta(\tau^\gp)$. Then $(\Phi,\phi)$ induces a good moduli space morphism:
 \[\xymatrix{
 L \ar[r]^-\Phi\ar[d]_\beta \POS p+(0,.7) *+{\sigma}="s" & L/\tau^\gp\ar[d] \POS p+(0,.7) *+{\sigma'\rlap{$\;=\sigma/\tau$}} \ar@{<<-} "s"\\
 N\ar[r]^-\phi & N/\beta(\tau^\gp)
 }\]
\end{proposition}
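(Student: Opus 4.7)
\emph{Proof plan.} First I reduce to the case where $\cok\beta$ is finite. By Remark \ref{rmk:beta-finite-cokernel}, writing $N=N_0\oplus N_1$ with $N_1=\sat_N(\beta(L))$, the stack $\X_{\sigma,\beta}$ splits as $\X_{\sigma,\beta_1}\times BG_\beta^0$ where $\beta_1\colon L\to N_1$ has finite cokernel. Because $\phi$ induces an isomorphism $\cok\beta\cong\cok\beta'$ and $\beta(\tau^\gp)\subseteq N_1$, one can choose compatible complements so that $\phi$ respects the splittings; the induced map factors as a product, the torus-factor piece $BG_\beta^0\to BG_{\beta'}^0$ is an isomorphism, and Lemma \ref{lem:product-of-gms-is-gms} reduces us to the finite-cokernel case (note that $\tau$ remains unstable with respect to $\beta_1$, as every functional on $N_1$ extends by zero to a functional on $N$).

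Now assume $\cok\beta$ (and hence $\cok\beta'$) is finite. Since being a good moduli space morphism is smooth-local on the target, I verify the property after base change along the atlas $X_{\sigma'}\to\X_{\sigma',\beta'}$. An explicit calculation identifies $Y=\X_{\sigma,\beta}\times_{\X_{\sigma',\beta'}} X_{\sigma'}$ with $[(X_\sigma\times G_{\beta'})/G_\beta]$, where $G_\beta$ acts via its natural action on $X_\sigma$ and via the induced map $G_\beta\to G_{\beta'}$ on the second factor, and where the projection $Y\to X_{\sigma'}$ is induced by $(x,h)\mapsto h^{-1}\cdot\Phi_*(x)$. Since $X_\sigma\times G_{\beta'}$ is affine and $G_\beta$ is linearly reductive, $Y$ is cohomologically affine, and so is $Y\to X_{\sigma'}$ because $X_{\sigma'}$ has affine diagonal. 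What remains is the Stein condition: the ring map $k[X_{\sigma'}]\to (k[X_\sigma]\otimes k[G_{\beta'}])^{G_\beta}$ is an isomorphism.

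Decomposing both sides into weight spaces under the natural torus actions, a $G_\beta$-invariant basis element $k_\lambda\otimes k_\chi$ (with $\lambda\in\sigma^\vee\cap L^*$ and $\chi\in D(G_{\beta'})$) is characterized by a relation $\lambda+\Phi^*(\tilde\chi)=\beta^*(\mu)$ for some lift $\tilde\chi\in L'^*$ of $\chi$ and some $\mu\in N^*$. The \emph{hard part}, and the only place the unstable hypothesis is used, is to show that $\mu$ descends through $\phi$: restricting the relation to $\tau$, the vanishing $\Phi(\tau)=0$ gives $\Phi^*(\tilde\chi)|_\tau=0$, while $\lambda\in\sigma^\vee$ together with $\tau\subseteq\sigma$ forces $\lambda|_\tau\geq 0$; hence $\mu|_{\beta(\tau)}=\beta^*(\mu)|_\tau\geq 0$, and the first characterization of unstability in Definition \ref{def:unstable} gives $\mu|_{\beta(\tau)}=0$, whence $\mu|_{\beta(\tau^\gp)}=0$ and $\mu=\phi^*(\mu')$ for a unique $\mu'\in N'^*$. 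Setting $\nu=\beta'^*(\mu')-\tilde\chi\in L'^*$ then yields $\Phi^*(\nu)=\lambda$, $\bar\nu=-\chi$ in $D(G_{\beta'})$, and $\nu|_{\sigma'}\geq 0$, so that $(\lambda,\chi)$ is exhibited as the pullback of $\nu\in\sigma'^\vee\cap L'^*$ from $k[X_{\sigma'}]$. This gives surjectivity of the ring map, while injectivity is immediate from injectivity of $\Phi^*$. We conclude that $Y\to X_{\sigma'}$ is a good moduli space morphism, and hence so is $f\colon\X_{\sigma,\beta}\to\X_{\sigma',\beta'}$ by smooth descent.
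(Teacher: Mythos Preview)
Your approach---base change to the atlas $X_{\sigma'}$ and compute invariants directly---is genuinely different from the paper's, and the overall strategy is sound. But there is a real gap in your characterization of the $G_\beta$-invariants. You assert that $\chi^\lambda \otimes \chi^\chi$ is invariant if and only if there exist a lift $\tilde\chi \in L'^*$ of $\chi$ and some $\mu \in N^*$ with $\lambda + \Phi^*\tilde\chi = \beta^*\mu$. The problem is the existence of the lift: the map $L'^* \to D(G_{\beta'})$ has cokernel $\ext^1(N',\ZZ)$, which is nonzero whenever $N' = N/\beta(\tau^\gp)$ has torsion, i.e.\ whenever $\beta(\tau^\gp)$ is not saturated in $N$. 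This already occurs with $N$ a lattice, so your finite-cokernel reduction does not help. Concretely: take $L = \ZZ^3$, $\sigma$ the positive octant, $N = \ZZ^2$, $\beta(e_1)=(2,0)$, $\beta(e_2)=(-2,0)$, $\beta(e_3)=(0,1)$, and $\tau = \mathrm{cone}(e_1,e_2)$; then $N' = \ZZ/2 \oplus \ZZ$, the map $L'^* \to D(G_{\beta'}) = \ZZ/2$ is zero, and the nontrivial $\chi$ admits no lift.

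It turns out that every invariant pair $(\lambda,\chi)$ with $\lambda \in \sigma^\vee$ does have $\chi$ liftable, but establishing this requires the unstability hypothesis in a place prior to where you invoke it. One fix: the short exact sequence $0 \to G_{\beta|_\tau} \to G_\beta \to G_{\beta'} \to 0$ from Lemma~\ref{lem:ses=>ses-of-G_betas} shows that invariance forces $\lambda|_{\tau^\gp} \in \im(\beta|_\tau)^*$, say $\lambda|_{\tau^\gp} = \nu \circ \beta|_\tau$ for some $\nu \in \beta(\tau^\gp)^*$. Then $\nu \ge 0$ on $\beta(\tau)$, and the second characterization of unstability in Definition~\ref{def:unstable} (the real cone $\beta(\tau)$ spans $\beta(\tau^\gp)\otimes\RR$) forces $\nu = 0$, hence $\lambda|_{\tau^\gp} = 0$, hence both $\lambda$ and $\chi$ come from $L'^*$. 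With this in hand your computation goes through. The paper's proof sidesteps the issue entirely: it factors through $(\sigma/\tau,\id_{L/\tau^\gp})$, whose target lattice has no torsion, applies Lemma~\ref{lem:quot-by-kernel-is-gms2} there, and then descends via Lemma~\ref{lem:quot-of-gms}.
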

\begin{proof}
 Consider the following diagram, in which the rows are exact. We define $K=\ker(\tau^\gp\to \beta(\tau^\gp))$.
 \[\xymatrix{
  K\ari[d] \ar@{=}[r] & K\ari[d]\\
  \tau^\gp\ari[r]\ar@{->>}[d] & L \ar@{->>}[r]\ar@{->>}[d]^{\beta_0} \POS p+(.7,.7) *+{\sigma}="s" & L/\tau^\gp\ar@{=}[d] \POS p+(.7,.7) *+{\sigma/\tau} \ar@{<<-} "s"\\
  \beta(\tau^\gp)\ar@{=}[d]\ari[r] & L/K\ar[d]^{\beta_1}\ar@{->>}[r] & L/\tau^\gp\ar[d]^{\beta_1'}\\
  \beta(\tau^\gp)\ari[r] & N\ar@{->>}[r]& N/\beta(\tau^\gp)
 }\]
 The top right square induces a good moduli space morphism by Lemma \ref{lem:quot-by-kernel-is-gms2}.
 Indeed, if $\lambda\in (L/K)^*$ is non-negative on $\sigma$, then it is 0 on $\tau$, as $\tau$ is unstable.  Hence $\lambda$ is induced from a unique element of $(L/\tau^{gp})^*$.

 Note that $\ker\beta_1 = (\ker\beta)/K$ is free since $K$ is saturated in $\ker\beta$ by construction. Conditions (\ref{item:quotient-of-gms-kernels}) and (\ref{item:quotient-of-gms-cokernels}) of Lemma \ref{lem:quot-of-gms} follow from the 5-lemma and the vertical equality in the left column, so it applies to complete the proof.
\end{proof}

\begin{lemma}[``Removing finite generic stackiness is a GMS'']\label{lem:removing-finite-generic-stackiness-gms}
 Suppose $\ker\phi=N_0$ is finite and $\phi$ is surjective. Let $L'=L$. Then $[X_\sigma/G_{\beta}]\to [X_{\sigma'}/G_{\beta'}]$ is a good moduli space morphism.
\end{lemma}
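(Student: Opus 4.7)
The plan is to identify the kernel of $G_\beta\to G_{\beta'}$, show that it acts trivially on $X_\sigma$, and then conclude by quotienting it out. I interpret the hypothesis $L'=L$ as specifying $\Phi=\id_L$ (by analogy with Lemma~\ref{lem:removing-trivial-stackiness-gms}), so that $\beta'=\phi\circ\beta$. Throughout I use the convention placing $L$ in degree $-1$ and $N$ in degree $0$ in $C(\beta)$, which is consistent with the identification $H^0(C(\beta)^*)=\ker\beta^*$, $H^1(C(\beta)^*)=\cok\beta^*$ when $N$ is a lattice.

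First I would compare $G_\beta$ and $G_{\beta'}$. Since $\phi$ is surjective with kernel $N_0$, the induced map $C(\beta)\to C(\beta')$ is a surjection of complexes whose kernel is $N_0$ concentrated in degree $0$. Applying $R\hom(-,\ZZ)$ and using $R\hom(N_0,\ZZ)=N_0^\vee[-1]$ (where $N_0^\vee:=\ext^1_\ZZ(N_0,\ZZ)$, since $N_0$ is finite), the long exact sequence in cohomology yields $H^0(C(\beta')^*)\xrightarrow{\sim}H^0(C(\beta)^*)$ and a short exact sequence
\[0\to H^1(C(\beta')^*)\to H^1(C(\beta)^*)\to N_0^\vee\to 0.\]
Dualizing via $D(-)$ gives $G_\beta^0\cong G_{\beta'}^0$ together with an exact sequence $0\to K\to G_\beta^1\to G_{\beta'}^1\to 0$, where $K:=D(N_0^\vee)$ is a finite linearly reductive group scheme. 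Thus $G_\beta\to G_{\beta'}$ is surjective with finite kernel $K$.

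The next step is to show that $K$ acts trivially on $X_\sigma$. The $G_\beta$-action on $X_\sigma$ factors through $G_\beta\to T_L$, arising from $L^*\to D(G_\beta^1)$, so it suffices to check that the composition $L^*\to D(G_\beta^1)\to D(K)=N_0^\vee$ is zero. Dually, this composition is computed at $H^1$ from the composite of complexes $N_0\to C(\beta)\to L[1]$: the first arrow is the inclusion of the kernel of $C(\beta)\to C(\beta')$, and the second is the connecting map from $0\to N\to C(\beta)\to L[1]\to 0$, which is the projection of $C(\beta)$ onto its $L$-summand in degree $-1$. But $N_0$ sits entirely in degree $0$ of $C(\beta)$, while $L[1]$ has nothing in degree $0$, so this composite literally vanishes as a map of complexes. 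I expect this derived-category chase to be the main obstacle, since the shift conventions have to be tracked carefully for the triviality of $K$ on the level of group actions to follow.

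Finally, since $K$ is linearly reductive and acts trivially on $X_\sigma$, we get $[X_\sigma/K]\cong X_\sigma\times BK$, and the projection $[X_\sigma/K]\to X_\sigma$ is a good moduli space morphism because $BK\to\spec k$ is one. The morphism in the statement factors as
\[[X_\sigma/G_\beta]\cong[[X_\sigma/K]/G_{\beta'}]\to[X_\sigma/G_{\beta'}],\]
obtained from $[X_\sigma/K]\to X_\sigma$ by quotienting with $G_{\beta'}=G_\beta/K$. Since good moduli space morphisms are fpqc-local on the base (\cite[Proposition~4.6]{Alper:good}), this can be verified by pulling back along the smooth cover $X_\sigma\to[X_\sigma/G_{\beta'}]$, where the morphism is identified with the already-established $[X_\sigma/K]\to X_\sigma$.
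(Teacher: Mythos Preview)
Your proof is correct and follows essentially the same route as the paper. The only difference is packaging: the paper applies Lemma~\ref{lem:ses=>ses-of-G_betas} to the short exact sequence with $L_0=0$, $\beta_0\colon 0\to N_0$, which immediately yields both the exact sequence $0\to G_{\beta_0}\to G_\beta\to G_{\beta'}\to 0$ and the triviality of $G_{\beta_0}\to T_L$ in one stroke, whereas you unwind that same homological algebra by hand (your $K$ is exactly $G_{\beta_0}$). Your derived-category verification that $K\to T_L$ is trivial is fine; note it can be shortened by observing that $N_0\to C(\beta)$ factors through $N\to C(\beta)$, and $N\to C(\beta)\to L[1]$ is already zero as two consecutive maps in a distinguished triangle.
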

\begin{proof}
 Consider the left-hand diagram, in which the rows are exact.
 \[\xymatrix{
  0\ar[r] & 0\ar[r]\ar[d]_{\beta_0} & L\ar@{=}[r]\ar[d]_\beta & L\ar[r]\ar[d]^{\beta'} & 0\\
  0\ar[r] & N_0\ar[r] & N\ar[r]^\phi & N'\ar[r] & 0
 }\qquad
 \xymatrix{
  0\ar[r] & G_{\beta_0}\ar[r]\ar[d] & G_\beta\ar[r]\ar[d] & G_{\beta'}\ar[r]\ar[d] & 0\\
  0\ar[r] & 0\ar[r] & T_L\ar@{=}[r] & T_L\ar[r] & 0
 }\]
 We have that $\cok\beta_0=N_0$ is finite, so by Lemma \ref{lem:ses=>ses-of-G_betas} we get the induced right-hand diagram in which the rows are exact. In particular, the action of $G_{\beta_0}$ on $X_\sigma$ is trivial, so the map $[X_\sigma/G_{\beta_0}]\to X_\sigma$ is a good moduli space morphism. Thus $[X_\sigma/G_\beta]=\bigl[[X_\sigma/G_{\beta_0}]/G_{\beta'}\bigr]\to [X_\sigma/G_{\beta'}]$ is a good moduli space morphism.
\end{proof}

\begin{lemma}\label{lem:gms-of-BGs}
 If $f\colon G\to H$ is a morphism of algebraic groups, then the induced morphism $BG\to BH$ is a good moduli space morphism if and only if $f$ is surjective and $K=\ker(f)$ is linearly reductive.
\end{lemma}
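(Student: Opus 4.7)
The strategy is to reduce both directions to the known fact that a group $K$ is linearly reductive if and only if $BK \to \spec k$ is a good moduli space morphism, via base change along the smooth atlas $\spec k \to BH$.

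First, I would identify the fiber of $BG \to BH$ over the universal $H$-torsor. Given $f\colon G \to H$ with image $H' = f(G)$ and kernel $K$, a direct computation (viewing the fiber product as classifying $G$-torsors with a trivialization of their induced $H$-torsor) shows
\[
 \spec k \times_{BH} BG \;\cong\; [H/G],
\]
where $G$ acts on $H$ through $f$ by left multiplication. Under this action, orbits correspond to right cosets $Hh'^{-1}$ of $H'$ in $H$, and when $f$ is surjective there is a single orbit with stabilizer $K$, so the fiber is $BK$. When $f$ is not surjective, the orbit space is the nontrivial coset space $H/H'$, and in particular $BG \to BH$ factors through the proper closed substack $BH' \subsetneq BH$.

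For the ``if'' direction, suppose $f$ is surjective and $K$ is linearly reductive. The morphism $\spec k \to BH$ is smooth and surjective, and by \cite[Proposition 4.6(ii)]{Alper:good} the property of being a good moduli space morphism can be checked after faithfully flat base change. By the computation above, the base change of $BG \to BH$ along $\spec k \to BH$ is $BK \to \spec k$, which is a good moduli space morphism: it is cohomologically affine by the definition of linear reductivity, and Stein because $\Gamma(BK, \O) = k^K = k$. Hence $BG \to BH$ is a good moduli space morphism.

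For the ``only if'' direction, assume $BG \to BH$ is a good moduli space morphism. Good moduli space morphisms are surjective \cite[Theorem 4.16(i)]{Alper:good}, so the factorization through $BH'$ above forces $H' = H$, i.e., $f$ is surjective. Now good moduli space morphisms are stable under arbitrary base change \cite[Proposition 4.6(i)]{Alper:good}, so base changing along $\spec k \to BH$ shows $BK \to \spec k$ is a good moduli space morphism, which is exactly the statement that $K$ is linearly reductive.

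The only real content beyond citing Alper's stability results is the identification of the fiber $\spec k \times_{BH} BG \cong [H/G]$ and the observation that surjectivity of $BG \to BH$ forces $f$ to be surjective; neither step presents a serious obstacle.
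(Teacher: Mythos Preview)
Your overall strategy---base change along the smooth cover $\spec k \to BH$ and identify the fiber as $[H/G]$---is exactly the paper's approach, and your ``if'' direction is fine.

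However, there is a genuine gap in your ``only if'' direction. You claim that when $f$ is not surjective, $BG \to BH$ factors through a ``proper closed substack $BH' \subsetneq BH$'', and then invoke surjectivity of good moduli space morphisms. But $BH' \to BH$ is \emph{not} a closed immersion: its fiber over $\spec k \to BH$ is $H/H'$ (with $H'$ acting freely by translation), which is always nonempty, so $BH' \to BH$ is in fact surjective. Thus surjectivity of $BG \to BH$ alone does not force $H' = H$ via this factorization.

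The paper avoids this by working symmetrically: since \cite[Proposition 4.6]{Alper:good} gives an ``if and only if'' for smooth base change, $BG \to BH$ is a good moduli space morphism exactly when $[H/G] \to \spec k$ is. One then uses the decomposition $[H/G] \cong BK \times H/\im(f)$ and reads off both conditions at once: the Stein condition forces $\Gamma(H/\im(f),\O)=k$, hence $H=\im(f)$, and cohomological affineness forces $K$ to be linearly reductive. You already have all the ingredients for this---you computed the fiber correctly and cited the right stability results---so the fix is simply to analyze $[H/G] \to \spec k$ directly rather than routing through the erroneous ``closed substack'' claim.
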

\begin{proof}
 We have that $\spec k\to BH$ is a smooth cover, so by \cite[Proposition 4.7]{Alper:good}, $BG\to BH$ is a good moduli space morphism if and only if $BG\times_{BH}\spec k = [H/G]\to \spec k$ is as well. Since $[H/G]\cong BK\times H/\im(f)$, the structure map to $\spec k$ is a good moduli space morphism if and only if $H=\im(f)$ and $K$ is linearly reductive.
\end{proof}

\begin{lemma}\label{lem:gms-when-L=0}
 Suppose $L=L'=0$ and that $N$ and $N'$ are finite. Then $\X_{(\Phi,\phi)}$ is a good moduli space morphism if and only if $\phi\colon N\to N'$ is surjective.
\end{lemma}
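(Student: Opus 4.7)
The plan is to reduce this to Lemma \ref{lem:gms-of-BGs} via Cartier duality. First, since $L = L' = 0$, the associated toric varieties are $X_\Sigma = X_{\Sigma'} = \spec k$, so the two toric stacks are $BG_\beta$ and $BG_{\beta'}$, and $\X_{(\Phi,\phi)}$ is simply the morphism $BG_\beta \to BG_{\beta'}$ induced by a group homomorphism $G_\beta \to G_{\beta'}$ (coming from Definition \ref{def:G_beta} applied to the pair $(\beta,\beta')$). By Lemma \ref{lem:gms-of-BGs}, it is a good moduli space morphism if and only if this homomorphism is surjective with linearly reductive kernel.

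Next I would dispense with the kernel hypothesis. Because $N$ and $N'$ are finite, the groups $G_\beta$ and $G_{\beta'}$ are finite diagonalizable group schemes, so the kernel of any morphism between them is again diagonalizable. Diagonalizable group schemes are linearly reductive in any characteristic (a representation of $D(A)$ is the same as an $A$-graded vector space, and the category of graded vector spaces is semisimple). Hence the kernel is automatically linearly reductive, and the problem reduces to showing $G_\beta \to G_{\beta'}$ is surjective if and only if $\phi$ is surjective.

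Then I would translate through Cartier duality. By Remark \ref{rmk:Cartier-dual}, surjectivity of $G_\beta \to G_{\beta'}$ is equivalent to injectivity of the induced map $D(G_{\beta'}) \to D(G_\beta)$ on character groups. Unpacking Definition \ref{def:G_beta} with $L = L' = 0$, the mapping cones $C(\beta)$ and $C(\beta')$ are quasi-isomorphic to $N$ and $N'$ concentrated in degree $0$. A free resolution computation (or the universal coefficient theorem) then gives $D(G_\beta^{0}) = \hom(N,\ZZ) = 0$ and $D(G_\beta^{1}) = \ext^1_\ZZ(N,\ZZ)$, so the character group of $G_\beta$ is $\ext^1_\ZZ(N,\ZZ)$, and similarly for $G_{\beta'}$; the induced map is $\ext^1_\ZZ(\phi,\ZZ)$.

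Finally, for finite abelian groups, $\ext^1_\ZZ(-,\ZZ) \cong \hom(-,\QQ/\ZZ)$, and $\hom(-,\QQ/\ZZ)$ is exact because $\QQ/\ZZ$ is a divisible (hence injective) abelian group. Applying this to the sequence $N \to N' \to \cok\phi \to 0$, injectivity of $\hom(N',\QQ/\ZZ)\to\hom(N,\QQ/\ZZ)$ is equivalent to $\hom(\cok\phi,\QQ/\ZZ) = 0$, which holds iff $\cok\phi = 0$, i.e.\ iff $\phi$ is surjective. I do not anticipate any real obstacle: the only step that requires a little care is identifying the character group of $G_\beta$ with $\ext^1_\ZZ(N,\ZZ)$ and tracking the direction of arrows through the contravariant Cartier duality; everything else is a formal consequence of Lemma \ref{lem:gms-of-BGs}.
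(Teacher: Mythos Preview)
Your proof is correct and follows essentially the same approach as the paper: both reduce to Lemma~\ref{lem:gms-of-BGs}, identify $D(G_\beta)$ with $\ext^1(N,\ZZ)$, and show that injectivity of $\ext^1(N',\ZZ)\to\ext^1(N,\ZZ)$ is equivalent to surjectivity of $\phi$. The only cosmetic difference is that you use the identification $\ext^1(-,\ZZ)\cong\hom(-,\QQ/\ZZ)$ and injectivity of $\QQ/\ZZ$, whereas the paper argues directly from the long exact sequence in $\ext^*(-,\ZZ)$ for finite abelian groups; these are equivalent.
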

\begin{proof}
 For $\beta\colon 0\to N$ with $N$ finite, we have that $\X_{0,\beta}$ is naturally $B(D(\ext^1(N,\ZZ)))$. By Lemma \ref{lem:gms-of-BGs}, we have that $\X_{(\Phi,\phi)}$ is a good moduli space morphism if and only if $\phi$ induces a surjection $D(\ext^1(N,\ZZ))\to D(\ext^1(N',\ZZ))$, which occurs if and only if it induces an injection $\ext^1(N',\ZZ)\to \ext^1(N,\ZZ)$.  We claim that this is equivalent to surjectivity of $\phi$.

 Indeed, for a short exact sequence of finite abelian groups
 \[
  0\to A\to B\to C\to 0
 \]
 we get a short exact sequence
 \[
  0\to \ext^1(C,\ZZ)\to \ext^1(B,\ZZ)\to \ext^1(A,\ZZ)\to 0.
 \]
 So if $\phi$ is surjective, the induced map $\ext^1(N',\ZZ)\to \ext^1(N,\ZZ)$ is injective.  Conversely, if $\phi$ is not surjective, then it has a non-trivial cokernel $C$. We then have that the induced map $\ext^1(N',\ZZ)\to \ext^1(N,\ZZ)$ factors as
 \[
  \ext^1(N',\ZZ)\to \ext^1(\im\phi,\ZZ)\to \ext^1(N,\ZZ)
 \]
 and the kernel of the first map is $\ext^1(C,\ZZ)$, which is (non-canonically) isomorphic to $C$, so $\ext^1(N',\ZZ)\to \ext^1(N,\ZZ)$ is not injective.
\end{proof}

\begin{proposition}
\label{prop:main-gms}
 Let $\tau$ be the pre-image cone $\Phi^{-1}(0)$. If $\cok\beta$ is finite, then $(\Phi,\phi)$ induces a good moduli space morphism if and only if
 \begin{enumerate}
 \item $\tau$ is unstable,
  \item $\phi$ is surjective, and
  \item $(\ker\phi)/\beta(\tau^\gp)$ is finite.
 \end{enumerate}
\end{proposition}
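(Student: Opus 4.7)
The plan is to prove the two directions separately, building on the structural lemmas established earlier in this subsection. For the backward implication I would factor $(\Phi,\phi)$ through two intermediate stacky fans as
\[
 (\sigma,\beta)\xrightarrow{\ \mathrm{quot.}\ }(\bar\sigma,\bar\beta)\xrightarrow{\ (\id,\bar\phi)\ }(\bar\sigma,\bar\phi\circ\bar\beta)\xrightarrow{\ (\bar\Phi,\id)\ }(\sigma',\beta'),
\]
where $\bar L=L/\tau^\gp$, $\bar N=N/\beta(\tau^\gp)$, $\bar\sigma$ is the image of $\sigma$ in $\bar L$, and $\bar\beta,\bar\Phi,\bar\phi$ are the induced maps. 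This factorization makes sense because $\tau^\gp\subseteq\ker\Phi$ (by definition of $\tau=\Phi^{-1}(0)$) and $\beta(\tau^\gp)\subseteq\ker\phi$ (since $\phi\circ\beta=\beta'\circ\Phi$ vanishes on $\tau^\gp$). Each of the three factors is a good moduli space morphism: the first by Proposition \ref{prop:crush-unstable-is-gms} using hypothesis (1); the second by Lemma \ref{lem:removing-finite-generic-stackiness-gms}, since $\bar\phi$ is surjective by (2) with finite kernel $\ker\phi/\beta(\tau^\gp)$ by (3); and the third by Lemma \ref{lem:isom-on-tori-is-gms}, whose codomain map is the identity and whose $\cok(\bar\phi\circ\bar\beta)$ is finite because $\cok\bar\beta$ is finite and $\bar\phi$ is surjective. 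Composition using Lemma \ref{l:gms-comp} (and the trivial stability of Stein and cohomological affineness under composition) closes the backward direction.

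For the forward implication, I would exploit stability of good moduli space morphisms under base change \cite[Proposition 4.6(i)]{Alper:good} to reduce to local settings covered by our earlier lemmas. First, base change along the open inclusion of the dense torus part $\X_{0,\beta'}\hookrightarrow\X_{\sigma',\beta'}$: the preimage of $T_{L'}\subseteq X_{\sigma'}$ in $X_\sigma$ is precisely the torus-invariant open affine $X_\tau$ associated to $\tau$, so the pullback is a good moduli space morphism $[X_\tau/G_\beta]\to\X_{0,\beta'}$ in which the preimage cone is now all of $\tau$. Thus we reduce to proving the statement assuming $\sigma'=0$ and $\tau=\sigma$. In this reduced setting, condition (1) (unstability of $\sigma$) follows from the Stein condition together with the invariant-monomial description of Lemma \ref{lem:quot-by-kernel-is-gms2}: every character $\lambda\in N^*$ non-negative on $\sigma$ must lift through $\phi^*$ to a character non-negative on the zero cone (hence to arbitrary sign), which forces $\beta(\sigma)$ to span a linear subspace. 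Condition (2) then emerges by further base changing to a classifying-stack substack of $\X_{0,\beta'}$ and invoking Lemma \ref{lem:gms-of-BGs}: the forced surjectivity on stabilizer groups dualizes via Cartier duality to surjectivity of $\phi$ itself. Finally, condition (3) follows by one more base change, along the quotient to the torsion-free part of $N'$, reducing to Lemma \ref{lem:gms-when-L=0}.

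The hardest step will be the forward direction, and specifically establishing condition (2). Mere surjectivity of a good moduli space morphism at the level of underlying stacks yields only $\cok\phi$ finite (equivalently, $T_\phi$ surjective on tori), whereas (2) is the stronger vanishing $\cok\phi=0$. Extracting this torsion-free vanishing requires the Stein condition to interact with Lemma \ref{lem:gms-of-BGs} through the stabilizer structure at a classifying-stack point, and correctly packaging this interaction — distinguishing the torsion contributions to $\cok\phi$ from the free ones — is the technical heart of the argument.
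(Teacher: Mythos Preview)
Your backward direction is correct and matches the paper's proof exactly: the same three-step factorization through $L/\tau^\gp$ and $N/\beta(\tau^\gp)$, invoking Proposition~\ref{prop:crush-unstable-is-gms}, Lemma~\ref{lem:removing-finite-generic-stackiness-gms}, and Lemma~\ref{lem:isom-on-tori-is-gms} in turn. Your first reduction in the forward direction (base change to the stacky torus of the target) is also what the paper does.

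There are two gaps in the forward direction. The minor one concerns condition~(1): you invoke Lemma~\ref{lem:quot-by-kernel-is-gms2}, but that lemma requires the target $\beta'$ to be an isomorphism, i.e.\ the target must be an honest toric variety. After your base change the target is $\X_{0,\beta'}$, which is a stacky torus, not $T_{N'}$. The paper fixes this by first composing with the good moduli space morphism $\X_{0,\beta'}\to T_{N'}$ (itself an instance of Lemma~\ref{lem:quot-by-kernel-is-gms2}) and only then applying the lemma to the composite $\X_{\tau,\beta}\to T_{N'}$. This is an easy patch, but as written your invocation of the lemma is not justified.

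The more serious gap is your treatment of (2) and (3). ``Base changing to a classifying-stack substack of $\X_{0,\beta'}$'' is not a well-defined operation: $\X_{0,\beta'}$ has no canonical classifying-stack \emph{substack} to base change along, and you do not specify one. What is true is that the stacky torus decomposes as a \emph{product} of a torus and a classifying stack, and the paper exploits exactly this. After crushing $\tau$ (so that both $\sigma,\sigma'$ are zero cones), the paper uses Proposition~\ref{prop:isos-of-affines} to exhibit both source and target stacky tori as products $\X_{0,0\to N_\tor}\times\X_{0,\beta_f}$ compatibly with $\X_{(\Phi,\phi)}$, and then applies Lemma~\ref{lem:product-of-gms-is-gms} to deduce that each factor is a good moduli space morphism. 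The torus factor then forces $\phi_f$ to be an isomorphism, and the classifying-stack factor (via Lemma~\ref{lem:gms-of-BGs}, which underlies Lemma~\ref{lem:gms-when-L=0}) forces $\phi_0$ surjective and hence $N'_0=0$. Your ``base change'' language suggests you may be reaching for this decomposition, but the decomposition itself---and the verification via Proposition~\ref{prop:isos-of-affines} that the morphism respects it---is the actual content, and your sketch does not supply it. You correctly diagnose that extracting $\cok\phi=0$ (rather than merely finite) is the crux; the missing ingredient is the product splitting that isolates the torsion and free parts so that Lemma~\ref{lem:gms-of-BGs} can be applied.
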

\begin{proof}
 Assume first that conditions (1)--(3) hold.  To show that $\X_{(\Phi,\phi)}$ is a good moduli space morphism, we factor $(\Phi,\phi)$ as follows, and show that each square induces a good moduli space morphism:
 \[\xymatrix{
  L \ar[d]_\beta\ar[r] \POS p+(0,.7) *+{\sigma}="s" & L/\tau^\gp \ar[d]\ar@{=}[r] \POS p+(0,.7) *+{\sigma/\tau}="st"
  \ar@{<<-}  "s"
  & L/\tau^\gp \ar[d]\ar[r] \POS p+(0,.7) *+{\sigma/\tau}="stt"
  \ar@{<-}_\sim "st"
  & L'\ar[d]^{\beta'} \POS p+(0,.7) *+{\sigma'}
  \ar@{<<-} "stt"\\
  N\ar[r] & N/\beta(\tau^\gp)\ar[r] & N'\ar@{=}[r] & N'
 }\]
 By condition (1) and Proposition \ref{prop:crush-unstable-is-gms}, the left square induces a good moduli space morphism.  By condition (3) and Lemma \ref{lem:removing-finite-generic-stackiness-gms}, the middle square induces a good moduli space morphism.  Lastly, condition (2) and the fact that $\cok\beta$ is finite shows that $\cok(\phi\circ\beta)$ is finite. Hence $\cok\beta'$ is finite, so Lemma \ref{lem:isom-on-tori-is-gms} shows that the right square induces a good moduli space morphism.

 Conversely, suppose $\X_{(\Phi,\phi)}$ is a good moduli space morphism. Base changing $\X_{(\Phi,\phi)}$ by the stacky torus of $\X_{\sigma',\beta'}$, we see that the following map of stacky fans also induces a good moduli space morphism by \cite[Proposition 4.7(i)]{Alper:good}:
 \[\xymatrix{
  L \ar[r]^\Phi\ar[d]_\beta \POS p+(0,.7) *+{\tau}="s" & L'\ar[d]^{\beta'} \POS p+(0,.7) *+{0} \ar@{<<-} "s"\\
  N\ar[r]^\phi & N'
 }\]
 By Lemma \ref{lem:quot-by-kernel-is-gms2}, the right square in the following diagram induces a good moduli space morphism, and so the composite map induces a good moduli space morphism:
 \[\xymatrix{
  L \ar[r]^\Phi\ar[d]_\beta \POS p+(0,.7) *+{\tau}="s" & L'\ar[d]^{\beta'}\ar[r]^{\beta'} \POS p+(0,.7) *+{0}="st" \ar@{<<-} "s" & N'\ar@{=}[d] \POS p+(0,.7) *+{0}\ar@{<-} "st" \\
  N\ar[r]^\phi & N'\ar@{=}[r] & N'
 }\]
Another application of Lemma \ref{lem:quot-by-kernel-is-gms2} (to the composite) shows that every linear functional on $N$ which is non-negative on $\tau$ must be induced from $N'$, and is therefore 0 on $\tau$.  This shows that $\tau$ is unstable.

Consider the following factorization of $(\Phi,\phi)$:
 \[\xymatrix{
  L \ar[d]_\beta\ar[r] \POS p+(0,.7) *+{\tau}="s" & L/\tau^\gp \ar[d]\ar[r] \POS p+(0,.7) *+{0}="st" \ar@{<<-} "s"  & L'\ar[d]^{\beta'} \POS p+(0,.7) *+{0} \ar@{<-} "st"\\
  N\ar[r] & N/\beta(\tau^\gp)\ar[r] & N'
 }\]
Proposition \ref{prop:crush-unstable-is-gms} shows that the square on the left induces a good moduli space morphism.  Then by Lemma \ref{l:gms-comp}, the square on the right also induces a good moduli space morphism. Thus we may assume $\sigma$ and $\sigma'$ are the zero cones. In this case, we must show that $\phi$ is surjective with finite kernel.

 Let $N=N_\tor\oplus N_f$, where $N_\tor$ is torsion and $N_f$ is free. Let $N' = N_\tor'\oplus N_f'\oplus N_0'$, where $N_0'$ is a direct complement to $\sat_{N'}(\beta'(L'))$, $N_\tor'$ is the torsion subgroup of $N'$, and $N_f'$ a direct complement to $N_\tor$ in $\sat_{N'}(\beta'(L'))$.
 Since $\X_{(\Phi,\phi)}$ is a good moduli space morphism, it must induce an isomorphism of the good moduli spaces of $\X_{0,\beta}$ and $\X_{0,\beta'}$. By Lemma \ref{lem:quot-by-kernel-is-gms2}, the good moduli space of $\X_{0,\beta}$ (resp.~$\X_{0,\beta'}=\X_{0,L'\to N_\tor'\oplus N_f'}\times B(D(N_0'))$) is Cartier dual to $N/N_\tor \cong N_f$ (resp.~$N_f'$). It follows that we may choose the complement $N_f'$ so that $\phi$ restricts to an isomorphism $\phi_f\colon N_f\to N_f'$. Let $\phi_0\colon N_\tor\to N_\tor'\oplus N_0'$ be the restriction of $\phi$ to $N_\tor$. It is clear that $\phi_0$ has finite kernel, so it remains to show that $\phi_0$ is surjective (in particular $N_0'=0$).

 Define $\beta_\tor\colon L\to N_\tor$, $\beta_f\colon L\to N_f$, $\beta_\tor'\colon L'\to N_\tor'$, $\beta_f'\colon L'\to N_f'$, and $\beta_0\colon 0\to N_0'$ so that $\beta=(\beta_\tor, \beta_f)$ and $\beta'=(\beta_\tor,\beta_f', \beta_0)$.
 By Proposition \ref{prop:isos-of-affines}, the following squares induce isomorphisms of stacky tori:
 \[\xymatrix{
  L\ar[r]^-\Delta \ar[d]_\beta & L\oplus L\ar[d]^-{(\beta_\tor,\beta_f)}\ar[r]^{0\oplus \id} & 0\oplus L\ar[d]^{0\oplus \beta_f}\\
  N\ar@{=}[r] & N_\tor\oplus N_f\ar@{=}[r] & N_\tor\oplus N_f
 }\]
 Hence $\X_{0,\beta}\cong \X_{0,0\to N_\tor}\times \X_{\beta_f}$, and similarly $\X_{0,\beta'}\cong \X_{0,0\to N_\tor'}\times \X_{0,\beta_f'}\times \X_{0,0\to N_0'}$.  Moreover, we see that $\X_{(\Phi,\phi)}$ is the product of morphisms $\X_{0,\beta_f}\to\X_{0,\beta'_f}$, $\X_{0,0\to N_\tor}\to \X_{0,0\to N'_\tor}$, and $\X_{0,0\to 0}\to \X_{0,0\to N'_0}$. By Lemma \ref{lem:product-of-gms-is-gms}, each of these morphisms must be a good moduli space morphism. Since the second morphism is a good moduli space morphism, Lemma \ref{lem:gms-when-L=0} shows that $\phi|_{N_\tor}\colon N_\tor\to N_\tor'$ is surjective. Since $\X_{0,0\to 0}$ is a non-stacky point and the third morphism is a good moduli space morphism, $\X_{0,0\to N_0'}$ must be a non-stacky point, so $N_0'=0$. It follows that $\phi_0$ is surjective.
\end{proof}

We now turn to Theorem \ref{thm:main-gms}.
\begin{proof}[Proof of Theorem \ref{thm:main-gms}]
 For $\sigma'\in \Sigma'$, let $X_{\sigma'}$ be the open subscheme of $X_{\Sigma'}$ corresponding to $\sigma'$. The property of being a good moduli space morphism can be checked Zariski locally on the base, so it is equivalent to checking that $[X_{\Phi^{-1}(\sigma')}/G_\beta]\to [X_{\sigma'}/G_{\beta'}]$ is a good moduli space morphism for each $\sigma'\in \Sigma'$. Therefore, by Proposition \ref{prop:main-gms}, the four conditions in Theorem \ref{thm:main-gms} imply that $\X_{\Sigma,\beta}\to \X_{\Sigma',\beta'}$ is a good moduli space morphism.

 Conversely, if $[X_{\Phi^{-1}(\sigma')}/G_\beta]\to [X_{\sigma'}/G_{\beta'}]$ is a good moduli space morphism, then $[X_{\Phi^{-1}(\sigma')}/G_\beta]\to \spec k$ is cohomologically affine since $[X_{\Phi^{-1}(\sigma')}/G_\beta]\to [X_{\sigma'}/G_{\beta'}]$ and $[X_{\sigma'}/G_{\beta'}]\to \spec k$ are cohomologically affine. So $\Phi^{-1}(\sigma')$ is a single cone $\sigma\in \Sigma$ (see Remark \ref{rmk:coh-affine-definitions-agree}). Good moduli space morphisms are surjective \cite[Theorem 4.16(i)]{Alper:good}, so by Lemma \ref{lem:coh-affine-surjection}, $\Phi(\sigma)=\sigma'$.  Proposition \ref{prop:main-gms} then shows that conditions (2)--(4) hold.
\end{proof}

\subsection{Examples}\label{subsec:toric-gms-examples}

\begin{example}
  Let $\Sigma = \raisebox{-2pt}{\begin{tikzpicture}[scale=.5,thick]
   \draw[<->] (1,0) -- (0,0) -- (0,1);
 \end{tikzpicture}}$ be the fan of $\AA^2\setminus 0$, and let $\beta\colon \ZZ^2\to 0$ be the zero map. Then $\X_{\Sigma,\beta}=[(\AA^2\setminus 0)/_{\smat{1&0\\ 0&1}}\GG_m^2]$. We see that both maximal cones of $\Sigma$ are unstable, so $(\Sigma,\beta)$ fails condition (i) of Corollary \ref{cor:main-gms-tv}. Therefore this stack does not have a toric variety good moduli space. Note that $\X_{\Sigma,\beta}$ is isomorphic to $[\PP^1/\GG_m]$, the prototypical example of a stack without a good moduli space.
\end{example}

\begin{example}\label{eg:gms-non-separated-line}
  Let $\Sigma = \raisebox{-2pt}{\begin{tikzpicture}[scale=.5,thick]
   \draw[<->] (1,0) -- (0,0) -- (0,1);
 \end{tikzpicture}}$ be the fan of $\AA^2\setminus 0$, and let $\beta=\begin{pmatrix}1&1\end{pmatrix}\colon \ZZ^2\to \ZZ$. Then $\X_{\Sigma,\beta}=[(\AA^2\setminus 0)/_{\smat{1&-1}}\GG_m]$. Condition (i) of Corollary \ref{cor:main-gms-tv} is satisfied, as the zero cone is the only unstable cone. However, the $\Sigma'$ of Notation \ref{not:fan-of-gms} consists of only the zero cone, so condition (ii) is not satisfied. Note that this $\X_{\Sigma,\beta}$ is the non-separated line (cf.~Warning \ref{warn:variety-gms}).
\end{example}

\begin{center}
\begin{tabular}{@{\extracolsep{1em}}cccc}
 \xymatrix{
   \ZZ^2\ar[d]_{\smat{1&\ng}}\ar[r] \POS p+(0,1.2) *+{
   \begin{tikzpicture}
      \clip (-.1,-.1) rectangle (1,1);
     \filldraw[fill=lightgray]
      (0,0) -- (9,0) -- (9,9) -- (0,9) -- cycle;
    \draw[thick,<->] (0,1) -- (0,0) -- (1,0);
   \end{tikzpicture}} & 0\ar[d]^{\phantom{\smat{1&\ng}}} \POS p+(0,.7) *+{\tikz \draw plot[mark=*] (0,0);}\\
   \ZZ \ar[r] & 0
 } &
 \xymatrix{
   \ZZ\ar[d]_2_(.4){\phantom{\id}}\ar[r]^2 \POS p+(0,.7) *+{\tikz \draw[->] (0,0) -- (.5,0);} & \ZZ\ar[d]^{\id} \POS p+(0,.7) *+{\tikz \draw[->] (0,0) -- (.5,0);}\\
   \ZZ \ar[r]^{\id} & \ZZ
 } &
 \xymatrix@C+2pc{
   \ZZ^2\ar[r]^{\smat{1&1\\ 0&2}} \ar[d]_{\smat{1&1\\ 0&2}} \POS p+(0,1) *+{
    \begin{tikzpicture}[scale=.4]
       \clip (-.5,-.5) rectangle (1.5,2.5);
       \foreach \x/\y/\z/\w in {9/0/0/8}
         \filldraw [draw=black,fill=lightgray] (\x,\y) -- (0,0) -- (\z,\w);
       \draw[help lines] (-10,-10) grid (10,10);
       \foreach \x/\y/\z/\w in {9/0/0/8}
         \draw [thick] (\x,\y) -- (0,0) -- (\z,\w);
   \end{tikzpicture}}   &
   \ZZ^2\ar@{=}[d]^{\phantom{\smat{1&1\\ 0&2}}}   \POS p+(0,1) *+{
    \begin{tikzpicture}[scale=.4]
       \clip (-.5,-.5) rectangle (1.5,2.5);
       \foreach \x/\y/\z/\w in {9/0/4/8}
         \filldraw [draw=black,fill=lightgray] (\x,\y) -- (0,0) -- (\z,\w);
       \draw[help lines] (-10,-10) grid (10,10);
       \foreach \x/\y/\z/\w in {9/0/4/8}
         \draw [thick] (\x,\y) -- (0,0) -- (\z,\w);
   \end{tikzpicture}} \\
   \ZZ^2\ar@{=}[r] & \ZZ^2
 } &
 \xymatrix{
  \ZZ^n \ar[d]_\beta \ar[r]^\beta \POS p+(0,.7) *+{\hhat\Sigma}="s" & N\ar@{=}[d] \POS p+(0,.7) *+{\Sigma} \ar@{<-} "s"\\
  N\ar@{=}[r] & N
 }\\
 Example \ref{eg:gms-A^2/G_m}&
 Example \ref{eg:gms-A^1/mu_2}&
 Example \ref{eg:gms-cox-construction}&
 Example \ref{ex:fantastack-gms}
\end{tabular}
\end{center}

\begin{example}\label{eg:gms-A^2/G_m}
 Theorem \ref{thm:main-gms} shows that this is a good moduli space morphism. Note that the unstable cone $\tau$ (the 2-dimensional cone) corresponds to the origin in $\AA^2$.
\end{example}

\begin{example}\label{eg:gms-A^1/mu_2}
 By Theorem \ref{thm:main-gms}, this map of stacky fans induces the good moduli space morphism $[\AA^1/\mu_2]\to \AA^1/\mu_2\cong \AA^1$.
\end{example}

\begin{example}\label{eg:gms-cox-construction}
 Theorem \ref{thm:main-gms} shows that the Cox construction of a toric variety is a good moduli space, and more generally that canonical stack morphisms are good moduli space morphisms. For example, applying Theorem \ref{thm:main-gms} to this map of stacky fans tells us that the toric variety on the right, the $A_1$ singularity, is the good moduli space of $[\AA^2/\mu_2]$ (cf.~Example \ref{eg:fantastack-A_1}).
\end{example}

\begin{example} \label{ex:fantastack-gms}
 As a special case of Theorem \ref{thm:main-gms}, we recover \cite[Theorem 5.5]{can}, which states that the fantastack $\F_{\Sigma,\beta}$ has the toric variety $X_\Sigma$ as its good moduli space.  Given a fan $\Sigma$ on a lattice $N$, let $\beta\colon \ZZ^n\to N$ and $\hhat\Sigma$ be as in Definition \ref{def:fantastack}. Then Theorem \ref{thm:main-gms} shows that the displayed map of stacky fans induces a good moduli space morphism, as desired.
\end{example}

It is often useful to think about a toric stack as ``sandwiched'' between its canonical stack and its good moduli space (if it has one). In this way, we often regard a toric stack as a ``partial good moduli space'' of its canonical stack or as a ``partial stacky resolution'' of its good moduli space.
\begin{example}
 Consider the stacky fan $(\Sigma,\beta)$ shown in the center below. On the left we have the stacky fan of the corresponding canonical stack (see Section \ref{sec:canonical-stacks}). On the right we have a toric variety. By Theorem \ref{thm:main-gms}, the two morphisms of stacky fans induce good moduli space morphisms of toric stacks.
 \[\xymatrix@!0 @R+2pc @C+4pc{
   \begin{tikzpicture}[scale=.4]
       \clip (-.5,-.5) rectangle (1.5,2.5);
       \foreach \x/\y/\z/\w in {9/0/0/8}
         \filldraw [draw=black,fill=lightgray] (\x,\y) -- (0,0) -- (\z,\w);
       \draw[help lines] (-10,-10) grid (10,10);
       \foreach \x/\y/\z/\w in {9/0/0/8}
         \draw [thick] (\x,\y) -- (0,0) -- (\z,\w);
   \end{tikzpicture} \POS p+(0,-1) *+{\ttilde\Sigma} &
   \begin{tikzpicture}[scale=.4]
       \clip (-.5,-.5) rectangle (1.5,2.5);
       \foreach \x/\y/\z/\w in {9/0/4/8}
         \filldraw [draw=black,fill=lightgray] (\x,\y) -- (0,0) -- (\z,\w);
       \draw[help lines] (-10,-10) grid (10,10);
       \foreach \x/\y/\z/\w in {9/0/4/8}
         \draw [thick] (\x,\y) -- (0,0) -- (\z,\w);
   \end{tikzpicture} \POS p+(0,-1) *+{\Sigma} &
   \begin{tikzpicture}[scale=.4]
       \clip (-.5,-.5) rectangle (1.5,4.5);
       \foreach \x/\y/\z/\w in {9/0/2/8}
         \filldraw [draw=black,fill=lightgray] (\x,\y) -- (0,0) -- (\z,\w);
       \draw[help lines] (-10,-10) grid (10,10);
       \foreach \x/\y/\z/\w in {9/0/2/8}
         \draw [thick] (\x,\y) -- (0,0) -- (\z,\w);
   \end{tikzpicture} \POS p+(0,-1) *+{\Sigma'}
   \\
   \ZZ^2\ar[r]^{\Phi=\smat{1&1\\ 0&2}} \ar[d]_{\tilde\beta=\smat{1&1\\ 0&4}}& \ZZ^2\ar[d]_{\beta=\smat{1&0\\ 0&2}}\ar[r]^{\smat{1&0\\ 0&2}} & \ZZ^2\ar[d]^\id \\
   \ZZ^2\ar[r]^\id & \ZZ^2 \ar[r]^\id & \ZZ^2
 }\]
 We can easily see that $X_\Sigma$ is the $A_1$ singularity $\AA^2/_{\smat{1&1}}\mu_2$, and that $G_\beta$ is $\mu_2$, but it is easiest to describe the action of $\mu_2$ on $X_\Sigma$ in terms of the canonical stack.

 The canonical stack is $\X_{\ttilde\Sigma,\tilde\beta}=[\AA^2/_{\smat{1&\ng}}\mu_4]$. We get the induced short exact sequence (cf.~Lemma \ref{lem:triangle=>ses-of-G_betas})
 \[\xymatrix@R-1.5pc{
  0\ar[r] & G_\Phi \ar@{=}[d]\ar[r]& G_{\tilde\beta}\ar@{=}[d]\ar[r]& G_\beta\ar@{=}[d]\ar[r] & 0 \\
  0\ar[r] & \mu_2\ar[r]^2 & \mu_4\ar[r] & \mu_2\ar[r] & 0
 }\]
 We can therefore express $\X_{\Sigma,\beta}$ as $\bigl[(\AA^2/\mu_2)\bigm/(\mu_4/\mu_2)\bigr]$. We can view this either as a ``partial good moduli space'' of $\X_{\ttilde\Sigma,\tilde\beta}=[\AA^2/_{\smat{1&\ng}}\mu_4]$ or as a ``partial stacky resolution'' of the singular toric variety $\X_{\Sigma'}=\AA^2/_{\smat{1&\ng}}\mu_4$.
\end{example}

\begin{example}
 Here is another example of a non-smooth toric stack which is not a scheme. Consider the stacky fan $(\Sigma,\beta)$ shown in the center below. On the left we have the stacky fan of the corresponding canonical stack (see Section \ref{sec:canonical-stacks}). On the right we have a toric variety. By Theorem \ref{thm:main-gms}, the two morphisms of stacky fans induce good moduli space morphisms of toric stacks.
 \[\xymatrix@!0 @R+2pc @C+4pc{
   \begin{tikzpicture}[scale=.4]
       \clip (-.5,-.5) rectangle (1.5,2.5);
       \foreach \x/\y/\z/\w in {9/0/0/8}
         \filldraw [draw=black,fill=lightgray] (\x,\y) -- (0,0) -- (\z,\w);
       \draw[help lines] (-10,-10) grid (10,10);
       \foreach \x/\y/\z/\w in {9/0/0/8}
         \draw [thick] (\x,\y) -- (0,0) -- (\z,\w);
   \end{tikzpicture} \POS p+(0,-1) *+{\ttilde\Sigma} &
   \begin{tikzpicture}[scale=.4]
       \clip (-.5,-.5) rectangle (1.5,2.5);
       \foreach \x/\y/\z/\w in {9/0/4/8}
         \filldraw [draw=black,fill=lightgray] (\x,\y) -- (0,0) -- (\z,\w);
       \draw[help lines] (-10,-10) grid (10,10);
       \foreach \x/\y/\z/\w in {9/0/4/8}
         \draw [thick] (\x,\y) -- (0,0) -- (\z,\w);
   \end{tikzpicture} \POS p+(0,-1) *+{\Sigma} &
   \begin{tikzpicture}[scale=.4]
       \clip (-.5,-.5) rectangle (2.5,2.5);
       \foreach \x/\y/\z/\w in {9/0/8/8}
         \filldraw [draw=black,fill=lightgray] (\x,\y) -- (0,0) -- (\z,\w);
       \draw[help lines] (-10,-10) grid (10,10);
       \foreach \x/\y/\z/\w in {9/0/8/8}
         \draw [thick] (\x,\y) -- (0,0) -- (\z,\w);
   \end{tikzpicture} \POS p+(0,-1) *+{\Sigma'} &
   \\
   \ZZ^2\ar[r]^{\Phi=\smat{1&1\\ 0&2}} \ar[d]_{\tilde\beta=\smat{2&2\\ 0&2}}& \ZZ^2\ar[d]_{\beta=\smat{2&0\\ 0&1}}\ar[r]^{\smat{2&0\\ 0&1}} & \ZZ^2\ar[d]^\id \\
   \ZZ^2\ar[r]^\id & \ZZ^2 \ar[r]^\id & \ZZ^2
 }\]
 Like the previous example, $\X_{\Sigma,\beta}$ is a quotient of the $A_1$ singularity $X_\Sigma=\AA^2/_{\smat{1&1}}\mu_2$ by an action of $\mu_2$. The canonical stack over it is $\X_{\ttilde\Sigma,\tilde\beta}=\bigl[\AA^2/_{\smat{1&0\\ 0&1}}(\mu_2\times\mu_2)\bigr]$, and it is the ``partial good moduli space'' $\bigl[(\AA^2/_{\smat{1&1}}\mu_2)/((\mu_2\times\mu_2)/_{\smat{1&1}}\mu_2)\bigr]$. The toric stack $\X_{\Sigma,\beta}$ has $\AA^2$ as its good moduli space.

 In Example \ref{eg:fantastack-A_1}, we constructed a stack which ``resolves the $A_1$ singularity by introducing stackiness''. In the same informal language, this example \emph{introduces a singularity} at a smooth point of $\AA^2$ by introducing stackiness.
\end{example}


\section{Moduli Interpretation of Smooth Toric Stacks}\label{sec:moduli}

A morphism $f\colon Y\to \PP^n$ is equivalent to the data of a line bundle $\L=f^*\O_{\PP^n}(1)$ and a choice of $n+1$ sections $\O_Y^{n+1}\to \L$ which generate $\L$. Cox generalized this moduli interpretation to smooth toric varieties \cite{cox:smooth}, and Perroni generalized it further to smooth toric Deligne-Mumford stacks \cite{perroni}. The main goal for this section is to generalize it further to smooth non-strict toric stacks.

In fact, we will see (Remark \ref{rmk:moduli-natural-class}) that smooth non-strict toric stacks are precisely the moduli stacks parametrizing tuples of generalized effective Cartier divisors (see \cite[Example 2.5a]{parabolic}) satisfying any given linear relations and any given intersection relations.

\begin{proposition}[{\cite[Expos\'e VIII, Proposition 4.1]{sga3}}]\label{prop:diagonalizable-torsor}
 Let $G$ be a diagonalizable group scheme, and $Y$ a scheme. Suppose $\A$ is a quasi-coherent $\O_Y$-algebra, together with an action of $G$ $($i.e.~a grading $\A=\bigoplus_{\chi\in D(G)}\A_\chi)$. Then $\Spec_Y \A$ is a $G$-torsor if and only if
 \begin{itemize}
  \item $\A_\chi$ is a line bundle for each $\chi\in D(G)$, and
  \item the homomorphism induced by multiplication $\A_\chi\otimes_{\O_Y}\A_{\chi'}\to \A_{\chi+\chi'}$ is an isomorphism.
 \end{itemize}
\end{proposition}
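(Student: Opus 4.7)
Both the hypothesis and the conclusion are local on $Y$ in the fppf topology (even Zariski-locally, once one knows the line bundles split), so I would work locally throughout. For the ``only if'' direction, a $G$-torsor is fppf-locally trivial, so on a suitable cover $U\to Y$ we have $\Spec_U(\A|_U)\cong G\times U$, which corresponds to the $\O_U$-algebra $\O_U\otimes_\ZZ \ZZ[D(G)]$. Its graded piece in degree $\chi$ is the free rank-one module $\O_U\cdot\chi$, and the multiplications $\O_U\cdot\chi\otimes\O_U\cdot\chi'\to\O_U\cdot(\chi+\chi')$ are the evident isomorphisms. Both assertions in the conclusion are local conditions, so they descend to $Y$.

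For the ``if'' direction, I would invoke the standard criterion that $\Spec_Y\A\to Y$ is a $G$-torsor if and only if it is faithfully flat and the ``action versus projection'' map
\[
 \Spec_Y\A\times_k G \;\longrightarrow\; \Spec_Y\A\times_Y\Spec_Y\A,\qquad (p,g)\longmapsto(p,p\cdot g),
\]
is an isomorphism. Dualizing, this translates to showing that the $\O_Y$-algebra homomorphism
\[
 \A\otimes_{\O_Y}\A \;\longrightarrow\; \A\otimes_\ZZ \ZZ[D(G)],\qquad a\otimes b\longmapsto\sum_\chi (a\cdot b_\chi)\otimes\chi,
\]
is an isomorphism, where $b=\sum_\chi b_\chi$ denotes the grading decomposition. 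With respect to the bigrading on the source and the grading on the target, this map decomposes as the direct sum of the multiplication maps $\A_\chi\otimes\A_{\chi'}\to\A_{\chi+\chi'}$, each of which is an isomorphism by hypothesis.

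It remains to check faithful flatness of $\O_Y\to\A$. Since $\A$ is a direct sum of line bundles, it is locally free as an $\O_Y$-module, hence flat; faithful flatness then follows once we know $\O_Y\hookrightarrow\A$ is a direct summand, i.e.~that $\A_0\cong\O_Y$. Indeed, the unit of $\A$ is $G$-invariant and so factors through $\A_0$, giving a global section of the line bundle $\A_0$; the hypothesized iso $\A_0\otimes_{\O_Y}\A_0\to\A_0$ together with the relation $1\cdot 1=1$ forces this section to trivialize $\A_0$. The only real obstacle is this somewhat fiddly verification that $\A_0\cong \O_Y$, together with the clean identification of the torsor criterion with the multiplication-is-an-iso condition on the nose; everything else is formal from the structure of diagonalizable groups.
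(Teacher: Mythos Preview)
Your argument is correct. The paper does not prove this proposition at all; it simply cites \cite[Expos\'e VIII, Proposition 4.1]{sga3} and moves on, so there is no proof in the paper to compare against. Your sketch follows the standard route: trivialize fppf-locally for the ``only if'' direction, and for the ``if'' direction verify the torsor criterion (faithfully flat plus the action/projection map is an isomorphism) by decomposing the algebra map $\A\otimes_{\O_Y}\A\to \A\otimes_k k[D(G)]$ into its graded pieces. Your check that the unit trivializes $\A_0$ is a bit terse but correct: the composite $\A_0\cong \O_Y\otimes\A_0\xrightarrow{s\otimes\id}\A_0\otimes\A_0\xrightarrow{\text{mult}}\A_0$ is multiplication by $1$, hence the identity, and since the multiplication map is an isomorphism by hypothesis, $s\otimes\id$ is an isomorphism, whence $s$ is (tensoring with $\A_0^{-1}$). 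This gives $\O_Y$ as a direct summand of $\A$ via the unit, so $\Spec_Y\A\to Y$ is surjective and therefore faithfully flat.
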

Since any $G$-torsor is affine over $Y$, it is clear that any $G$-torsor is of this form.

\begin{notation}
 Given a collection of line bundles $\L_1,\dots, \L_n\in \pic(Y)$ and $\a=(a_1,\dots, a_n)\in \ZZ^n$, let $\L^\a = \L_1^{\otimes a_1}\otimes\cdots \otimes \L_n^{\otimes a_n}$.
\end{notation}

Let $\beta\colon \ZZ^n\to N$ be the morphism of lattices so that $G_\beta=G\subseteq \GG_m^n$. Then we have the presentation $N^*\xrightarrow{\beta^*} \ZZ^n\xrightarrow{\phi} D(G)\to 0$. A quasi-coherent $\O_Y$-algebra as in Proposition \ref{prop:diagonalizable-torsor} is therefore equivalent to a collection of line bundles $\L_1,\dots, \L_n\in \pic(Y)$ with isomorphisms $c_\psi\colon \O_Y\xrightarrow\sim \L^{\beta^*(\psi)}$ for $\psi\in N^*$, such that $c_{\psi}\otimes c_{\psi'}=c_{\psi+\psi'}\colon \O_Y\to \L^{\beta^*(\psi)+\beta^*(\psi')}$.

\begin{definition}\label{def:(Sigma,beta)-collection}
 Suppose $\Sigma$ is a subfan of the fan of $\AA^n$, and $\beta\colon \ZZ^n\to N$ is a lattice homomorphism with finite cokernel. A \emph{$(\Sigma,\beta)$-collection} on a scheme $Y$ consists of
 \begin{itemize}
  \item an $n$-tuple of line bundles $(\L_1,\dots, \L_n)$,
  \item global sections $s_i\in H^0(Y,\L_i)$ so that for each point $y\in Y$, there is a cone $\sigma\in \Sigma$ so that $s_i(y)\neq 0$ for all $e_i\not\in \sigma$.
  \item trivializations $c_\psi\colon \O_Y\xrightarrow\sim \L^{\beta^*(\psi)}$ for each $\psi\in N^*$, satisfying the compatibility condition $c_{\psi}\otimes c_{\psi'}=c_{\psi+\psi'}$.
 \end{itemize}
 An \emph{isomorphism} of $(\Sigma,\beta)$-collections is an $n$-tuple of isomorphisms of line bundles respecting the associated sections and trivializations.
\end{definition}
\begin{remark}\label{rmk:suppress-trivializations}
 Note that since $N^*$ is a free subgroup of $\ZZ^n$, it suffices to specify $c_{\psi}$ where $\psi$ varies over a basis of $N^*$. If specified this way, the isomorphisms do not need to satisfy any compatibility condition. Different choices of these trivializations are related by the action of the torus (see Remark \ref{rmk:moduli-torus-action}), so we often suppress the trivializations.
\end{remark}
\begin{remark}\label{rmk:effective-pseudodivisors}
 A line bundle with section is a \emph{generalized effective Cartier divisor}. If the section is non-zero on each irreducible component of $Y$, the vanishing locus of the section is an effective Cartier divisor. Therefore, a $(\Sigma,\beta)$-collection can be defined as an $n$-tuple of generalized effective Cartier divisors $(D_1,\dots, D_n)$ such that $\sum a_iD_i$ is linearly equivalent to zero whenever $(a_1,\dots, a_n)\in N^*$ and such that $D_{i_1}\cap\dots\cap D_{i_k}=\varnothing$ whenever $e_{i_1},\dots, e_{i_k}$ do not all lie on a single cone of $\Sigma$. Here we are suppressing the trivializations as in Remark \ref{rmk:suppress-trivializations}.
\end{remark}

\begin{remark}
 Given a morphism $Y'\to Y$ and a $(\Sigma,\beta)$-collection on $Y$, we may pull back the line bundles, sections, and trivializations to produce a $(\Sigma,\beta)$-collection on $Y'$. This makes the category of $(\Sigma,\beta)$-collections into a fibered category over the category of schemes.
\end{remark}

\begin{theorem}[{Moduli interpretation of smooth $\X_{\Sigma,\beta}$}]\label{thm:moduli}
 Let $\Sigma$ be a subfan of the fan for $\AA^n$, and let $\beta\colon \ZZ^n\to N$ be a lattice homomorphism with finite cokernel. Then $\X_{\Sigma,\beta}$ represents the fibered category of $(\Sigma,\beta)$-collections.
\end{theorem}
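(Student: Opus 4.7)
The plan is to unwind the definition of a morphism $Y\to \X_{\Sigma,\beta}=[X_\Sigma/G_\beta]$ and match it against a $(\Sigma,\beta)$-collection on $Y$. Such a morphism is a $G_\beta$-torsor $\pi\colon P\to Y$ together with a $G_\beta$-equivariant map $P\to X_\Sigma\subseteq \AA^n$, and the main tool is Proposition \ref{prop:diagonalizable-torsor}, which identifies $G_\beta$-torsors over $Y$ with $D(G_\beta)$-graded $\O_Y$-algebras whose degree-$\chi$ pieces are line bundles and whose multiplication maps are isomorphisms.

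First I would handle the torsor side. Since $\cok\beta$ is finite, we have a short exact sequence $0\to N^*\xrightarrow{\beta^*}\ZZ^n\xrightarrow{\phi}D(G_\beta)\to 0$. Given line bundles $\L_1,\dots,\L_n$ and trivializations $c_\psi\colon \O_Y\xrightarrow{\sim}\L^{\beta^*(\psi)}$ satisfying $c_\psi\otimes c_{\psi'}=c_{\psi+\psi'}$, I build a $D(G_\beta)$-graded $\O_Y$-algebra $\A=\bigoplus_{\chi\in D(G_\beta)}\A_\chi$ by choosing for each $\chi$ a lift $\tilde\chi\in\ZZ^n$ and setting $\A_\chi=\L^{\tilde\chi}$. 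Two lifts differ by $\beta^*(\psi)$ for some $\psi\in N^*$, and the $c_\psi$ provide canonical identifications; the cocycle condition ensures these glue to a well-defined line bundle $\A_\chi$ and to an associative multiplication $\A_\chi\otimes\A_{\chi'}\to\A_{\chi+\chi'}$ given by tensor product of line bundles. Each piece is a line bundle and multiplication is an isomorphism, so $P:=\Spec_Y\A\to Y$ is a $G_\beta$-torsor by Proposition \ref{prop:diagonalizable-torsor}. Conversely, any such $\A$ yields $\L_i:=\A_{\phi(e_i)}$, and the multiplication composed with $\A_0=\O_Y$ produces trivializations $c_\psi$ that satisfy the cocycle condition automatically.

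Next I would handle the equivariant map. A $G_\beta$-equivariant morphism $P=\Spec_Y\A\to \AA^n$ is the same as a $D(G_\beta)$-graded $\O_Y$-algebra map $\O_Y[x_1,\dots,x_n]\to \A$ (with $x_i$ of weight $\phi(e_i)$), which is determined by the images $s_i\in H^0(Y,\A_{\phi(e_i)})=H^0(Y,\L_i)$. Using the explicit description $X_\Sigma=\AA^n\setminus V(J_\Sigma)$ from Remark \ref{rmk:computing-X_hatSigma}, this morphism factors through $X_\Sigma$ precisely when, for every $y\in Y$, some generator $\prod_{e_i\notin\sigma}x_i$ of $J_\Sigma$ pulls back to a section nonvanishing at $y$, i.e.\ when some $\sigma\in \Sigma$ satisfies $s_i(y)\ne 0$ for all $e_i\notin \sigma$. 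This is exactly the section condition of Definition \ref{def:(Sigma,beta)-collection}.

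Both constructions are manifestly natural under pullback along $Y'\to Y$, and isomorphisms of $(\Sigma,\beta)$-collections correspond bijectively to isomorphisms of the associated pairs (torsor, equivariant map). The only substantive bookkeeping, and hence the main obstacle, is verifying that the cocycle condition on the $c_\psi$ captures exactly the algebra-structure content of $\A$ (not merely its underlying grading), so that the passage from $\A$ to $\{\L_i,c_\psi\}$ and back is canonically the identity. This amounts to a straightforward but careful check that the tensor-product multiplication defined via lifts agrees on each graded piece with the original multiplication of $\A$.
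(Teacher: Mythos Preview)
Your proposal is correct and follows essentially the same route as the paper: unwind a map to $[X_\Sigma/G_\beta]$ as a $G_\beta$-torsor plus equivariant map, use Proposition~\ref{prop:diagonalizable-torsor} to translate the torsor into a $D(G_\beta)$-graded algebra (equivalently, line bundles $\L_i$ with compatible trivializations $c_\psi$), and identify the equivariant map with sections $s_i$ satisfying the $\Sigma$-nonvanishing condition. The paper's proof is terser---it leaves the construction of $\A$ from $(\L_i,c_\psi)$ and the cocycle bookkeeping as ``straightforward to verify''---so your extra detail is welcome; note only that Remark~\ref{rmk:computing-X_hatSigma} is stated for fantastacks, though the underlying description $X_\Sigma=\AA^n\setminus V(J_\Sigma)$ of course holds for any subfan of the fan of $\AA^n$.
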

\begin{proof}
 A morphism $f\colon Y\to [\AA^n/G_\beta]$ consists of a $G_\beta$-torsor $P\to Y$ and a $G_\beta$-equivariant morphism $P\to \AA^n$. By Proposition \ref{prop:diagonalizable-torsor}, the data of a $G_\beta$-torsor is equivalent to a $D(G_\beta)$-graded quasi-coherent sheaf of algebras $\A$ such that $\A_\chi$ is a line bundle for each $\chi\in D(G_\beta)$. A $G_\beta$-equivariant morphism $P\to \AA^n$ is then equivalent to a homomorphism of $\O_T$-algebras $\bigoplus_{\a\in \NN^n}\O_T\to \A$ which respects the $D(G_\beta)$-grading (the $D(G_\beta)$-grading on the former algebra is induced by the $\ZZ^n$-grading and the homomorphism $\phi\colon \ZZ^n\to D(G_\beta)$).
 This is equivalent to homomorphisms of $\O_T$-modules $s_i\colon \O_T\to \A_{\phi(e_i)}$. Under this correspondence, the vanishing locus of $s_i$ is the pre-image $[\AA^{n-1}_i/G_\beta]$, where $\AA^{n-1}_i$ is the $i$-th coordinate hyperplane. In particular, $(\A_{\phi(e_1)},\dots, \A_{\phi(e_n)},s_1,\dots, s_n)$, along with the implicit trivializations, form a $(\Sigma,\beta)$-collection if and only if $f$ factors through the open substack $\X_{\Sigma,\beta}$.

 It is straightforward to verify that the above correspondence induces an equivalence.
\end{proof}

\begin{remark}\label{rmk:moduli-torus-action}
 Carefully following the construction in the proof shows that the action of the torus is as follows. Suppose $N^*\subseteq \ZZ^n$ is the sublattice of trivialized line bundles. Then the trivializations $c_\psi$ have natural weights of the torus $T=\hom_\gp(N^*,\GG_m)$ associated to them. $T$ acts on the trivializations via these weights.
\end{remark}

\begin{remark}\label{rmk:moduli-open-immersions}
 This moduli interpretation is stable under base change by open immersions. Suppose a morphism $Y\to \X_{\Sigma,\beta}$ corresponds to the $(\Sigma,\beta)$-collection $(\L_i,s_i,c_\psi)$. Let $\Sigma'$ be a subfan of $\Sigma$. Then the pullback $Y'=Y\times_{\X_{\Sigma,\beta}}\X_{\Sigma',\beta}$ is the open subscheme of $Y$ where a subset of sections may simultaneously vanish only if $\Sigma'$ contains the cone spanned by the rays corresponding to those sections.
\end{remark}

\begin{remark}\label{rmk:moduli-natural-class}
 As a converse to Theorem \ref{thm:moduli}, note that any set of intersection relations among an $n$-tuple of generalized effective Cartier divisors (i.e.~any specification of which subsets of divisors should have empty intersection) determines a subfan $\Sigma$ of the fan of $\AA^n$. Furthermore, any\footnote{There is \emph{one} required relationship between the intersection relations and the trivializations. Namely, if the intersection of a \emph{single} Cartier divisor is required to be empty (i.e.~if the corresponding section of the line bundle is nowhere vanishing), then the line bundle must be trivialized.
 That is, if the intersection relations explicitly require the section to trivialize the line bundle, then it must be trivialized.} compatible collection of trivializations determines a subgroup $N^*=\{\a\in \ZZ^n|\L^\a$ is trivialized$\}$. The dual of the inclusion of $N^*$ is a lattice homomorphism $\beta\colon \ZZ^n\to N$ with finite cokernel. Then $\X_{\Sigma,\beta}$ is the moduli stack of $n$-tuples of generalized effective Cartier divisors with the given intersection relations and linear relations.
\end{remark}

\subsection{Examples}
The simplest examples to describe are fantastacks. See Notation \ref{not:fantastacks} and Examples \ref{eg:fantastack-double-origin}--\ref{eg:fantastack-square-cone} for an explanation of the notation used below.

\begin{remark}
 For any smooth stacky fan $(\Sigma, \beta\colon L\to N)$, one may modify $\Sigma$ by first removing all unstable cones which $\beta$ does not map to $0$ and then including cones so that $\Sigma$ is induced by a fan on $N$. This shows that any smooth toric stack contains an open substack which has a toric open immersion into a fantastack. Remark \ref{rmk:moduli-open-immersions} therefore allows us to understand the moduli interpretation of non-fantastack smooth toric stacks by appropriately modifying the intersection relations.
\end{remark}

\begin{remark}\label{rmk:moduli-linear-relations}
 We explicitly obtain linear relations by choosing a basis for $N^*$. For each basis element $\psi$, we get a trivialization of $\L^{\beta^*(\psi)}$. That is, we get trivializations of the divisors whose coefficients appear in the rows of $\beta^*$.
\end{remark}

\begin{center}
\begin{tabular}{@{\extracolsep{3em}}ccc}
 $\begin{tikzpicture}
 \clip (-1.5,-1.5) rectangle (1.5,1.5);
 \foreach \x/\y/\z/\w in {9/0/0/9, 0/9/-9/-9, -9/-9/9/0}
   \filldraw [draw=black,fill=lightgray] (\x,\y) -- (0,0) -- (\z,\w);
 \draw[help lines] (-10,-10) grid (10,10);
 \foreach \x/\y/\z/\w in {9/0/0/9, 0/9/-9/-9, -9/-9/9/0}
   \draw [thick] (\x,\y) -- (0,0) -- (\z,\w);
 \foreach \x/\y/\dottext in {1/0/1, 0/1/2, -1/-1/3}
   \filldraw[draw=black,fill=white] (\x,\y) node {$\dottext$} circle (6pt);
 \end{tikzpicture}$
 &
  $\begin{tikzpicture}
   \draw [thick,->] (0,0) -- (2,0);
 \filldraw[draw=black,fill=white] (1,0) node[above=4pt] {$2$} circle (6pt)
   (1,0) node {\small $1$} circle (4pt);
 \end{tikzpicture}$
 &
 $\begin{tikzpicture}
 \clip (-.5,-.5) rectangle (1.5,2.5);
 \foreach \x/\y/\z/\w in {9/0/4/8}
   \filldraw [draw=black,fill=lightgray] (\x,\y) -- (0,0) -- (\z,\w);
 \draw[help lines] (-10,-10) grid (10,10);
 \foreach \x/\y/\z/\w in {9/0/4/8}
   \draw [thick] (\x,\y) -- (0,0) -- (\z,\w);
 \foreach \x/\y/\dottext in {1/0/1, 1/2/2}
   \filldraw[draw=black,fill=white] (\x,\y) node {$\dottext$} circle (6pt);
 \end{tikzpicture}$
 \\
 Example \ref{eg:moduli-P^2}&
 Example \ref{eg:moduli-double-origin}&
 Example \ref{eg:moduli-A_1}
\end{tabular}
\end{center}

We follow the less formal approach to $(\Sigma,\beta)$-collections explained in Remark \ref{rmk:effective-pseudodivisors}.

\begin{example}\label{eg:moduli-P^2}
 A morphism to the leftmost stack is a choice of three generalized effective Cartier divisors $D_1, D_2, D_3$ such that $D_1\cap D_2\cap D_3=\varnothing$ (because no cone contains all three dots), and so that $D_1-D_3\sim \varnothing$ and $D_2-D_3\sim \varnothing$ (because $\beta^*=\smat{1&0&\ng\\ 0&1&\ng}$; see Remark \ref{rmk:moduli-linear-relations}). Here, $\varnothing$ denotes the empty divisor.

 In other words, it is a choice of a line bundle and three global sections that do not all vanish at any point. This is the usual description of morphisms to $\PP^2$.
\end{example}
\begin{example}\label{eg:moduli-double-origin}
  A morphism to the middle stack is a choice of two generalized effective Cartier divisors $D_1$ and $D_2$ so that $D_1+D_2\sim \varnothing$ (because $\beta^*=(1\ 1)$; see Remark \ref{rmk:moduli-linear-relations}). Notice two particular morphisms from $\AA^1$ to this stack; one given by setting $D_1=0$ and $D_2=\varnothing$, and another by setting $D_1=\varnothing$ and $D_2=0$. Indeed, the open substack where we impose the condition $D_1\cap D_2=\varnothing$ is the non-separated line.
\end{example}
\begin{example}\label{eg:moduli-A_1}
  A morphism to the rightmost stack $\X$ is a choice of two generalized effective Cartier divisors $D_1$ and $D_2$ so that $D_1+D_2\sim \varnothing$ and $2D_2\sim \varnothing$ (because $\beta^*=\smat{1&1\\ 0&2}$; see Remark \ref{rmk:moduli-linear-relations}). Since there is a single cone that contains all the dots, there is no intersection condition on $D_1$ and $D_2$. Notice that since two such divisors satisfy the condition $\D_1+\D_2\sim \varnothing$, we get a natural transformation $\hom(-,\X)\to \hom(-,\Y)$, where $\Y$ is the stack of Example \ref{eg:moduli-double-origin}. The corresponding map $\X\to\Y$ can be seen as the vertical projection in the displayed pictures.
\end{example}

\subsection{Smooth Non-strict Toric Stacks}

In this subsection we use the moduli interpretation to show that any smooth non-strict toric stack is an essentially trivial gerbe over a toric stack.
Suppose $Y\to \X_{\Sigma,\beta} = [X_\Sigma/G_\beta]$ is the morphism to a smooth toric stack corresponding to the $(\Sigma,\beta)$-collection $(\L_i,s_i,c_\psi)$. It factors through the closed substack corresponding to the $j$-th coordinate hyperplane of $\AA^n$ if and only if $s_j=0$. Theorem \ref{thm:moduli} (together with Remark \ref{rmk:smooth-gen-stacky<smooth}) therefore gives us the following moduli interpretation of smooth non-strict toric stacks.

\begin{corollary}[{of Theorem \ref{thm:moduli}}]\label{cor:moduli-gen-stacky}
 The smooth non-strict toric substack $\Z$ of $\X_{\Sigma,\beta}$ corresponding to a coordinate subspace $H$ of $\AA^n$ has the following moduli interpretation: morphisms $Y\to \Z$ correspond to $(\Sigma,\beta)$-collections on $Y$ in which we require that $s_j=0$ if $H$ does not contain the $j$-th coordinate axis.
\end{corollary}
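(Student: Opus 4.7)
The plan is to unwind the correspondence established in Theorem \ref{thm:moduli} and track what happens when we impose the condition that a morphism $Y \to \X_{\Sigma,\beta}$ factors through the closed substack $\Z$. Since $\Sigma$ is a subfan of the fan of $\AA^n$, by Remark \ref{rmk:smooth-gen-stacky<smooth} any smooth non-strict toric substack $\Z$ of $\X_{\Sigma,\beta}$ is obtained as $[(H \cap X_\Sigma)/G_\beta]$ for some coordinate subspace $H \subseteq \AA^n$. Let $J \subseteq \{1,\dots,n\}$ be the set of indices $j$ for which the $j$-th coordinate axis is not contained in $H$, so that $H = V(x_j : j \in J) \subseteq \AA^n$.

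First, I recall from the proof of Theorem \ref{thm:moduli} that the bijection between morphisms $f\colon Y \to \X_{\Sigma,\beta}$ and $(\Sigma,\beta)$-collections goes as follows: $f$ is the data of a $G_\beta$-torsor $P \to Y$ together with a $G_\beta$-equivariant morphism $g\colon P \to X_\Sigma \subseteq \AA^n$; writing $P = \Spec_Y \A$ with $\A = \bigoplus_\chi \A_\chi$, one sets $\L_i = \A_{\phi(e_i)}$, and $s_i$ is the $\O_Y$-module map $\O_Y \to \L_i$ induced by restricting the $G_\beta$-equivariant algebra map $\bigoplus_{\a \in \NN^n} \O_Y \to \A$ to the weight $e_i$ summand. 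Equivalently, $s_i$ is obtained by pulling $x_i$ back along $g$ and descending to $Y$. In particular, the vanishing locus of $s_i$ on $Y$ is precisely the pullback of the closed substack $[V(x_i)/G_\beta]$ of $\X_{\hhat\Sigma,\beta}$.

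The second step is the key unwinding: the morphism $f$ factors through $\Z = [(H \cap X_\Sigma)/G_\beta]$ if and only if $g\colon P \to X_\Sigma$ factors through $H \cap X_\Sigma$, which (since $G_\beta$-torsors are faithfully flat) occurs if and only if $g^*(x_j) = 0$ for every $j \in J$. Descending along $P \to Y$, this is equivalent to requiring $s_j = 0$ for all $j \in J$. Thus the correspondence of Theorem \ref{thm:moduli} restricts to a bijection between morphisms $Y \to \Z$ and those $(\Sigma,\beta)$-collections on $Y$ in which $s_j = 0$ whenever $H$ does not contain the $j$-th coordinate axis. Naturality in $Y$ is inherited from Theorem \ref{thm:moduli}.

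There is no serious obstacle; the only thing to be careful about is making sure the identification between the section $s_j$ and the pullback of the coordinate function $x_j$ is the one already set up inside the proof of Theorem \ref{thm:moduli}, so that ``factoring through the closed substack cut out by $x_j = 0$'' really does translate to ``$s_j = 0$.'' Once this is acknowledged, the corollary is essentially a direct application of the theorem, with Remark \ref{rmk:smooth-gen-stacky<smooth} invoked only to justify that every smooth non-strict toric substack arises from a coordinate subspace in this manner.
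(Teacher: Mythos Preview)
Your proposal is correct and follows essentially the same approach as the paper. The paper's argument is the short paragraph preceding the corollary: it observes (already noted inside the proof of Theorem~\ref{thm:moduli}) that the vanishing locus of $s_j$ is the pullback of $[\AA^{n-1}_j/G_\beta]$, so factoring through the $j$-th coordinate hyperplane is equivalent to $s_j=0$; you have simply unpacked this in more detail.
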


\begin{definition}\label{def:root-stack}
 Suppose $\K$ is a line bundle on a stack $\X$ and $b$ is a positive integer. The \emph{root stack} $\sqrt[b]{\K/\X}$ is defined as the fiber product in the following diagram, where the map $\X\to B\GG_m$ is the one induced by $\K$:
 \[\xymatrix{
  \sqrt[b]{\K/\X} \ar[r]\ar[d] & B\GG_m\ar[d]^{\hat{}\,b}\\
  \X\ar[r] & B\GG_m
 }\]
 The map $\hat{}\,b\colon B\GG_m\to B\GG_m$ is given by sending a line bundle to its $b$-th tensor power. It is induced by the group homomorphism $\GG_m\to \GG_m$ given by $t\mapsto t^b$.

 If $\underline{\K}=(\K_1,\dots, \K_r)$ is an $r$-tuple of line bundles and $\b=(b_1,\dots, b_r)$ is an $r$-tuple of positive integers, we similarly define $\sqrt[\b]{\underline\K/\X}$ as $\X\times_{B\GG_m^r}B\GG_m^r$, where the map $\X\to B\GG_m^r$ is induced by the tuple $(\K_1,\dots, \K_r)$ and the map $\hat{}\,\b\colon B\GG_m^r\to B\GG_m^r$ is induced by the homomorphism $\GG_m^r\to \GG_m^r$ given by $(t_1,\dots, t_r)\mapsto (t_1^{b_1},\dots, t_r^{b_r})$. It is straightforward to check that $\sqrt[\b]{\underline\K/\X}$ is the fiber product of the $\sqrt[b_i]{\K_i/\X}$ over $\X$.
\end{definition}

\begin{remark}
 Explicitly, a morphism from a scheme (or stack) $Y$ to the root stack $\sqrt[\b]{\underline\K/\X}$ is a morphism $f\colon Y\to \X$, an $r$-tuple of line bundles $(\L_1,\dots, \L_r)$, and isomorphisms $\L_i^{\otimes b_i}\cong f^*\K_i$.
\end{remark}

\begin{definition}\label{def:essentially-trivial-gerbe}
 We say that $\X\to \Y$ is an \emph{essentially trivial gerbe} if $\X$ is of the form $B(\GG_m^s)\times \sqrt[\b]{\underline \K/\Y}$.
\end{definition}

\begin{proposition}\label{prop:essentially-trivial-gerbe}
 Let $\Z$ be a smooth non-strict toric stack. Then $\Z$ is an essentially trivial gerbe over a smooth toric stack.
\end{proposition}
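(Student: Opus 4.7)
My plan is to apply the moduli interpretation (Theorem~\ref{thm:moduli} with Corollary~\ref{cor:moduli-gen-stacky}) to directly exhibit $\Z$ as a root stack over a smaller smooth toric stack. First, by Remark~\ref{rmk:beta-finite-cokernel} I decompose $\Z = \X_{\Sigma,\beta_1}\times B(\GG_m^s)$ where $\beta_1\colon L\to N_1$ has finite cokernel; the $B(\GG_m^s)$ factor is the first factor of the desired decomposition, so I may assume $\cok\beta$ is finite. Next, I fix a direct-sum decomposition $N = N_f\oplus N_\tor$ and let $\beta_f\colon L\to N_f$ be the composition of $\beta$ with the projection to the free part. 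Since $\cok\beta_f$ is a quotient of the finite group $\cok\beta$, the stack $\Y := \X_{\Sigma,\beta_f}$ is a smooth strict toric stack (smoothness follows from that of $X_\Sigma$, in turn from the assumed smoothness of $\Z$).

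To identify $\Z$ as a root stack over $\Y$, I use Remark~\ref{rmk:non-strict-is-closed-substack}: choose a presentation $\ZZ^t\xrightarrow{Q}\ZZ^r\to N\to 0$ for which $Q$ is in Smith normal form relative to the chosen decomposition of $N$, so that $Q^*$ sends its image to $\bigoplus_{i=1}^t b_i\ZZ$ where the $b_i$ are the invariant factors of $N_\tor$. Choose a lift $B\colon L\to\ZZ^r$ of $\beta$, and set $\beta' = B\oplus Q$. By Theorem~\ref{thm:moduli} and Corollary~\ref{cor:moduli-gen-stacky}, a morphism $T\to\Z$ consists of line bundles $\L_i,\M_j$ on $T$, sections of the $\L_i$ satisfying the intersection conditions from $\Sigma$ (with the sections of the $\M_j$ required to vanish), and trivializations $c_\psi\colon \O\cong \L^{B^*(\psi)}\otimes\M^{Q^*(\psi)}$ for $\psi\in(\ZZ^r)^*$. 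Since $N^*=\ker Q^*$, the trivializations for $\psi\in N^*$ involve only the $\L_i$ and are exactly the $(\Sigma,\beta_f)$-collection data of a morphism $T\to\Y$. Choosing representatives $\psi_1,\ldots,\psi_t$ for $(\ZZ^r)^*/N^*$ with $Q^*(\psi_i)=b_ie_i^*$, the remaining trivializations translate into isomorphisms $\M_i^{b_i}\cong \K_i$, where $\K_i := \L^{-B^*(\psi_i)}$ is a line bundle on $\Y$; this is precisely the root-stack data for $\sqrt[\b]{\underline\K/\Y}$ with $\b=(b_1,\ldots,b_t)$ and $\underline\K=(\K_1,\ldots,\K_t)$.

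The main technical subtlety is the clean separation of ``base'' and ``root'' data within the moduli of $\Z$; once the splitting $N=N_f\oplus N_\tor$ and the Smith normal form of the presentation of $N$ are fixed, this separation is essentially mechanical, and the compatibility relation $c_{\psi+\psi'}=c_\psi\otimes c_{\psi'}$ ensures that the full collection of trivializations is recovered from the $(\Sigma,\beta_f)$-datum plus the $t$ root-data isomorphisms. Combining the above with the $B(\GG_m^s)$ factor from the first step yields the desired form $\Z\cong B(\GG_m^s)\times \sqrt[\b]{\underline\K/\Y}$.
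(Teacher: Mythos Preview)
Your argument is essentially the same strategy as the paper's---embed $\Z$ as a closed substack of a smooth toric stack and read off the gerbe structure from the moduli interpretation (Theorem~\ref{thm:moduli}, Corollary~\ref{cor:moduli-gen-stacky})---but you organize the bookkeeping differently: you split off the $B(\GG_m^s)$ factor and identify the base $\Y=\X_{\Sigma,\beta_f}$ \emph{before} embedding, and you choose the embedding of Remark~\ref{rmk:non-strict-is-closed-substack} with $Q$ in Smith normal form so that the root data fall out immediately. The paper instead takes an arbitrary embedding into a smooth $\X_{\Sigma,\beta}$ with $\Sigma$ a subfan of the fan of $\AA^n$, and extracts the $B\GG_m$ factor, the base, and the root data all at once from the combinatorics of $N^*\subseteq\ZZ^n$. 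Your version is a bit more intrinsic; the paper's is a bit more uniform. Both arrive at the same decomposition.

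There is one genuine gap you should patch. Theorem~\ref{thm:moduli} (and hence Corollary~\ref{cor:moduli-gen-stacky}) only applies when the ambient fan is a subfan of the fan of $\AA^m$. In your construction via Remark~\ref{rmk:non-strict-is-closed-substack}, the ambient fan $\Sigma'$ lives on $L\oplus\ZZ^t$, and it is a subfan of the fan of $\AA^{\rk L+t}$ only if $\Sigma$ itself is already a subfan of the fan of $\AA^{\rk L}$ under some identification $L\cong\ZZ^{\rk L}$. You note that $X_\Sigma$ is smooth, but smoothness alone does not give you this presentation (e.g.\ the fan of $\PP^1$ on $\ZZ$ is not a subfan of the fan of $\AA^1$). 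The fix is one line: since $X_\Sigma$ is smooth, the Cox construction (Remark~\ref{rmk:smooth-toric-stacks-are-fantastacks}) gives a morphism of stacky fans $(\ttilde\Sigma,\beta\circ\Phi)\to(\Sigma,\beta)$ inducing an isomorphism of non-strict toric stacks (Theorem~\ref{thm:isomorphisms}), with $\ttilde\Sigma$ a subfan of the fan of $\AA^m$; replace $(\Sigma,\beta)$ by $(\ttilde\Sigma,\beta\circ\Phi)$ at the outset. The paper handles this by invoking Remark~\ref{rmk:smooth-gen-stacky<smooth} together with the same fantastack presentation of smooth toric stacks.
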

\begin{proof}
 Suppose $\Z$ is a closed torus-invariant substack of a toric stack $\X_{\Sigma,\beta}$, with $\Sigma$ a subfan of the fan of $\AA^n$ (this is possible by Remark \ref{rmk:smooth-gen-stacky<smooth}). Let $\D_1,\dots, \D_n$ be the torus-invariant divisors of $\X_{\Sigma,\beta}$. Without loss of generality, $\Z=\D_1\cap\cdots\cap \D_l$. By Corollary \ref{cor:moduli-gen-stacky}, $\Z$ is the stack of $(\Sigma,\beta)$-collections where $s_i=0$ for $1\le i\le l$.

 Let $\Sigma'$ be the restriction of $\Sigma$ to the sublattice $\ZZ^{n-l}\subseteq \ZZ^n$ given by the last $n-l$ coordinates, let $N'^*=\ZZ^{n-l}\cap N^*$, and let $\beta'\colon \ZZ^{n-l}\to N'$ be the dual to the inclusion. For each $i$ between $1$ and $l$, let $b_i$ be the smallest positive integer (if it exists) so that $(0,\dots, 0,b_i,0,\dots, 0, a_{i,l+1},\dots, a_{i,n})\in N^*$.
 Without loss of generality, we may assume these integers exist for $1\le i\le r$. Define $\K_i=\L_{l+1}^{-a_{i,l+1}}\otimes\cdots \otimes \L_n^{-a_{i,n}}$.

 Suppose $(\L_i,s_i,c_\psi)$ is a $(\Sigma,\beta)$-collection on a scheme such that $s_i=0$ for $1\le i\le l$. Then the last $n-l$ line bundles with sections form a $(\Sigma',\beta')$-collection, the line bundles $\L_i$ for $r<i\le l$ satisfy no relations, and for $1\le i\le r$, we have isomorphisms $\L_i^{b_i}\cong \K_i$. Therefore, a morphism to $\Z$ is precisely the data of a morphism to $B(\GG_m^{l-r})\times \sqrt[(b_1,\dots, b_r)]{(\K_1,\dots, \K_r)/\X_{\Sigma',\beta'}}$.
\end{proof}

{\section*{Appendix A: Short Exact Sequences of \texorpdfstring{$G_\beta$s}{G\_betas}}
\renewcommand{\thesection}{A}
\refstepcounter{section}
\label{sec:ses-of-G_betas}

In this appendix, we prove some results which allow us to relate the groups of Definition \ref{def:G_beta} to one another. The basic advantage of expressing a group $G$ as an extension of a quotient $H$ by a normal subgroup $N$ is that any quotient stack $[X/G]$ can be identified with $\bigl[[X/N]/H\bigr]$. This observation is used heavily throughout this paper and \cite{toricartin2}.

We refer the reader to \cite{gelfand-manin} for the relevant homological algebra.

\begin{lemma}\label{lem:triangle=>ses-of-G_betas}
 Suppose $L$, $L'$, and $N'$ are finitely generated abelian groups. Suppose $\Phi\colon L\to L'$ and $\beta'\colon L'\to N'$ are homomorphisms. Suppose $\ker \Phi$ and $\ker \beta'$ are free, and $\cok\Phi$ is finite. Then we have the following diagram, in which the rows are exact and the morphisms to $D(L^{*})$ and $D(L'^{*})$ are the ones described immediately after Definition \ref{def:G_beta}:
 \[\xymatrix{
  0\ar[r] & G_\Phi\ar[r]\ar@{=}[d] & G_{\beta'\circ \Phi}\ar[r]\ar[d] & G_{\beta'}\ar[r]\ar[d] & 0\\
  0\ar[r] & G_\Phi\ar[r] & D(L^{*})\ar[r] & D(L'^{*})\ar[r] & 0
 }\]
\end{lemma}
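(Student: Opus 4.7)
The plan is to apply the octahedral axiom to the composition $L \xrightarrow{\Phi} L' \xrightarrow{\beta'} N'$ and then dualize. I first check that $G_{\beta'\circ\Phi}$ is actually defined: $\ker(\beta'\circ\Phi)$ fits in an exact sequence $0 \to \ker\Phi \to \ker(\beta'\circ\Phi) \to \Phi(\ker(\beta'\circ\Phi)) \to 0$ whose outer terms are free (the right-hand term is a subgroup of the free group $\ker\beta'$), so the middle is free as well.

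The octahedral axiom then produces a distinguished triangle $C(\Phi) \to C(\beta'\circ\Phi) \to C(\beta') \to C(\Phi)[1]$ in the derived category of abelian groups. Applying the contravariant triangulated functor $(-)^* = R\hom_\gp(-,\ZZ)$ and taking cohomology produces a long exact sequence. Two vanishing facts cut it down: (i) each $H^i(C(f)^*)$ is concentrated in degrees $\{0,1\}$, because $\ZZ$ has global dimension $1$ and because $\ext^1(\ker f,\ZZ) = 0$ when $\ker f$ is free, which kills $H^2(C(f)^*)$ via the hyper-Ext spectral sequence; and (ii) by Remark \ref{rmk:cok-finite-iff-dual-injective}, $\cok\Phi$ finite gives $H^0(C(\Phi)^*) = \hom(\cok\Phi,\ZZ) = 0$, which in particular yields $G_\Phi^0 = 0$. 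The LES then collapses to an isomorphism $H^0(C(\beta')^*) \xrightarrow{\sim} H^0(C(\beta'\circ\Phi)^*)$ and a short exact sequence on $H^1$'s. Cartier dualizing (an exact anti-equivalence) and recombining the degree-$0$ and degree-$1$ pieces via $G_f = G_f^0 \oplus G_f^1$ yields the top-row short exact sequence $0 \to G_\Phi \to G_{\beta'\circ\Phi} \to G_{\beta'} \to 0$.

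For the bottom row, the same procedure applied to the mapping cone triangle of $\Phi$ alone gives a short exact sequence $0 \to L'^* \xrightarrow{\Phi^*} L^* \to H^1(C(\Phi)^*) \to 0$; applying $D$ produces $0 \to G_\Phi \to D(L^*) \to D(L'^*) \to 0$, which is the bottom row. Commutativity of the diagram is a consequence of the functoriality of $R\hom$: the vertical maps $G_{\beta'\circ\Phi}\to D(L^*)$ and $G_{\beta'}\to D(L'^*)$ are, by the definition immediately after Definition \ref{def:G_beta}, Cartier duals of the natural maps $L^* \to H^1(C(\beta'\circ\Phi)^*)$ and $L'^* \to H^1(C(\beta')^*)$ arising from the same family of triangles. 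The main obstacle is really just the bookkeeping around identifying these natural maps with the connecting maps produced by the octahedral axiom, which is formal but requires some care.
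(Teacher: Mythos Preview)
Your proof is correct and follows essentially the same route as the paper: invoke the octahedral axiom on $L\xrightarrow{\Phi}L'\xrightarrow{\beta'}N'$, dualize the resulting triangle of cones, and use the vanishing $D(G_\Phi^0)=0$ (from $\cok\Phi$ finite) to split the long exact sequence into the required pieces. Your additional checks---that $\ker(\beta'\circ\Phi)$ is free and that the $H^i(C(f)^*)$ vanish outside degrees $0,1$---are details the paper leaves implicit, but the argument is otherwise the same.
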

\begin{proof}
 By the octahedron axiom, the commutative triangle $L\xrightarrow\Phi L'\xrightarrow{\beta'} N'$ induces an exact triangle on cones $C(\Phi)\to C(\beta'\circ \Phi)\to C(\beta')\to C(\Phi)[1]$. This induces an exact triangle of duals, which induces the long exact sequence of homology groups
 \[
  0\to D(G_{\beta'}^{0})\to D(G_{\beta'\circ \Phi}^{0})\to \underbrace{D(G_\Phi^{0})}_0\to D(G_{\beta'}^{1})\to D(G_{\beta'\circ \Phi}^{1})\to D(G_\Phi^{1})\to 0.
 \]
 Since $\cok\Phi$ is finite, $D(G_\Phi^{0})=0$. We therefore get the following diagram with exact rows:
 \[
 \raisebox{5.5pc}{$\xymatrix@R-1.4pc{
  0\ar[r] & 0\dplus \ar[r] & G_{\beta'\circ \Phi}^0\dplus\ar[r] & G_{\beta'}^0\dplus\ar[r] & 0\\
  0\ar[r] & G_\Phi^1\ar[r]\ar@{=}[dd] & G_{\beta'\circ \Phi}^1\ar[r]\ar[dd] & G_{\beta'}^1\ar[r]\ar[dd] & 0\\ \\
  0\ar[r] & G_\Phi\ar[r] & D(L^{*})\ar[r] & D(L'^{*})\ar[r] & 0
 }$}\qedhere\]
\end{proof}

\begin{lemma}\label{lem:ses=>ses-of-G_betas}
 Suppose we have the commutative diagram of finitely generated abelian groups below on the left, in which the rows are exact. Suppose $\ker \beta_0$, $\ker \beta$, and $\ker \beta'$ are free.
 \[\xymatrix{
  0\ar[r] & L_0\ar[r]\ar[d]_{\beta_0} & L\ar[r]\ar[d]_\beta & L'\ar[r]\ar[d]^{\beta'} & 0\\
  0\ar[r] & N_0\ar[r] & N\ar[r] & N'\ar[r] & 0
 }\qquad\xymatrix@C-.8pc{
  0\ar[r] & G_{\beta_0}\ar[r]\ar[d] & G_\beta\ar[r]\ar[d] & G_{\beta'}\ar[r]\ar[d] & 0\\
  0\ar[r] & D(L_0^{*})\ar[r] & D(L^{*})\ar[r] & D(L'^{*})\ar[r] & 0
 }\]
 If $\cok\beta_0$ is finite, then the rows of the diagram on the right are exact.
\end{lemma}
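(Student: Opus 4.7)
My plan is to establish the top-row exactness via a long exact sequence obtained by applying the derived dual functor to a short exact sequence of mapping cones, and then derive the bottom-row exactness by Cartier duality. The commutative diagram with exact rows assembles into a short exact sequence of two-term complexes
\[
 0 \to C(\beta_0) \to C(\beta) \to C(\beta') \to 0,
\]
exact in each of the two degrees by row exactness of the original diagram. This is a distinguished triangle in the derived category of abelian groups. Applying the contravariant triangulated functor $R\hom(-,\ZZ)$ produces another distinguished triangle among the derived duals, whose long exact cohomology sequence has $j$th term $H^j(C(\beta_i)^*) = D(G_{\beta_i}^j)$.

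The key technical point is that this cohomology vanishes outside degrees $0$ and $1$, which is where the free-kernel hypothesis enters. Given that $\ker\beta_i$ is free, one can construct a length-one free resolution of $C(\beta_i)$ as follows: choose a surjection $F \twoheadrightarrow \cok\beta_i$ from a free abelian group, with (necessarily free) kernel $K$, and form $[\ker\beta_i \oplus K \to F]$ with differential $(k,x)\mapsto x$; this maps quasi-isomorphically to $C(\beta_i) = [L \to N]$, using freeness of $K$ to lift $K \to \im\beta_i$ through $L \twoheadrightarrow \im\beta_i$ and freeness of $F$ to lift $F \twoheadrightarrow \cok\beta_i$ along $N \twoheadrightarrow \cok\beta_i$. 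The derived dual of such a two-term free complex is itself concentrated in degrees $0$ and $1$, yielding the required vanishing.

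Consequently the long exact sequence reads
\[
 0 \to D(G_{\beta'}^0) \to D(G_\beta^0) \to D(G_{\beta_0}^0) \to D(G_{\beta'}^1) \to D(G_\beta^1) \to D(G_{\beta_0}^1) \to 0.
\]
Finiteness of $\cok\beta_0$ forces $\beta_0^*$ injective (Remark \ref{rmk:cok-finite-iff-dual-injective}), so $D(G_{\beta_0}^0)=0$, and the six-term sequence splits into two short exact sequences in degrees $0$ and $1$. Summing these via $G_{(-)} = G_{(-)}^0 \oplus G_{(-)}^1$ produces $0 \to D(G_{\beta'}) \to D(G_\beta) \to D(G_{\beta_0}) \to 0$; Cartier duality reverses the arrows to give the top row. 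The bottom row is the Cartier dual of $0 \to L'^* \to L^* \to L_0^* \to 0$, which is exact whenever $L_0\hookrightarrow L$ splits (as in the lattice settings in which the lemma is applied). The principal obstacle is the length-one resolution construction of the second paragraph, which is exactly where the assumption that each $\ker\beta_i$ is free is indispensable.
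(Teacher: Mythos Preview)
Your proof is correct and follows essentially the same approach as the paper: form the short exact sequence of mapping cones, pass to the derived dual triangle, read off the six-term long exact sequence, and use finiteness of $\cok\beta_0$ to kill $D(G_{\beta_0}^0)$, then sum the two resulting short exact sequences. You supply more detail than the paper on why $C(\beta_i)^*$ is concentrated in degrees $0$ and $1$ (the explicit two-term free resolution using freeness of $\ker\beta_i$), and you are appropriately cautious about the bottom row, which the paper leaves implicit.
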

\begin{proof}
 We are given a short exact sequence $0\to C(\beta_0)\to C(\beta)\to C(\beta')\to 0$, which gives us an exact triangle in the derived category. The dual is then again an exact triangle, so we get a long exact sequence of homology groups
 \[
  0\to D(G_{\beta'}^{0})\to D(G_\beta^{0})\to \underbrace{D(G_{\beta_0}^{0})}_0\to D(G_{\beta'}^{1})\to D(G_\beta^{1})\to D(G_{\beta_0}^{1})\to 0.
 \]
 Since $\cok\beta_0$ is finite, we have that $D(G_{\beta_0}^{0})=0$, so we get the following diagram with exact rows:
 \[
 \raisebox{5.5pc}{$\xymatrix@R-1.4pc{
  0\ar[r] & 0\dplus \ar[r] & G_\beta^0\dplus\ar[r] & G_{\beta'}^0\dplus\ar[r] & 0\\
  0\ar[r] & G_{\beta_0}^1\ar[r]\ar[dd] & G_\beta^1\ar[r]\ar[dd] & G_{\beta'}^1\ar[r]\ar[dd] & 0\\ \\
  0\ar[r] & D(L_0^{*})\ar[r] & D(L^{*})\ar[r] & D(L'^{*})\ar[r] & 0
 }$}\qedhere\]
\end{proof}

\begin{lemma}\label{lem:gen-stabilizer<->torsion}
  Suppose $\beta\colon L\to N$ is a homomorphism from a lattice to a finitely generated abelian group, and suppose $\cok\beta$ is finite. The map $G_{0\to N_\tor}\to G_\beta$ induced by the commutative square
  \[\xymatrix{
    0\ar[d]\ar[r] & L\ar[d]^\beta\\
    N_\tor \ar@{^(->}[r] & N
  }\]
  is the inclusion of the kernel of the map $G_\beta\to T_L$.
\end{lemma}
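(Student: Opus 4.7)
The plan is to deduce this directly from Lemma \ref{lem:ses=>ses-of-G_betas}. Specifically, I would apply that lemma to the morphism of short exact sequences
\[\xymatrix@R-1pc{
 0\ar[r] & 0\ar[r]\ar[d] & L\ar@{=}[r]\ar[d]_\beta & L\ar[r]\ar[d]^{\bar\beta} & 0\\
 0\ar[r] & N_\tor\ar[r] & N\ar[r] & N/N_\tor\ar[r] & 0,
}\]
where $\bar\beta$ denotes the composition $L\xrightarrow\beta N\twoheadrightarrow N/N_\tor$. The hypotheses of the lemma are immediate: the three kernels $0$, $\ker\beta$, $\ker\bar\beta$ are all subgroups of the lattice $L$ (hence free), and the relevant cokernel $\cok(0\to N_\tor)=N_\tor$ is finite.

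The conclusion of Lemma \ref{lem:ses=>ses-of-G_betas} is then the commutative diagram with exact rows
\[\xymatrix{
 0\ar[r] & G_{0\to N_\tor}\ar[r]\ar[d]_0 & G_\beta\ar[r]\ar[d] & G_{\bar\beta}\ar[r]\ar[d] & 0\\
 0\ar[r] & 0\ar[r] & D(L^*)\ar@{=}[r] & D(L^*)\ar[r] & 0,
}\]
whose vertical arrows are the natural ones of Definition \ref{def:G_beta}. By inspecting that construction, the top row is built from our original square, and the middle vertical arrow is the map $G_\beta\to T_L$ appearing in the statement.

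The one extra observation required is that the rightmost vertical arrow $G_{\bar\beta}\to T_L$ is injective. This holds because $\cok\bar\beta$ is a quotient of $\cok\beta$ and hence finite, so $\bar\beta$ is a map of lattices with finite cokernel; by Remark \ref{rmk:cok-finite-iff-dual-injective}, $\bar\beta^*$ is injective and $G_{\bar\beta}=D(\cok\bar\beta^*)$ embeds in $T_L$ as the kernel of $T_L\twoheadrightarrow T_{N/N_\tor}$. A one-line diagram chase then finishes the proof: any $g\in G_\beta$ mapping to $0$ in $T_L$ must map to $0$ in $G_{\bar\beta}$ by injectivity, hence lifts uniquely from $G_{0\to N_\tor}$; conversely, $G_{0\to N_\tor}$ lies in the kernel by commutativity of the leftmost square. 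There is no genuine obstacle here — everything is packaged in Lemma \ref{lem:ses=>ses-of-G_betas}, and the only real check is that the naturality of the mapping-cone construction produces the map $G_{0\to N_\tor}\to G_\beta$ of the statement.
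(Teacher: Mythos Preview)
Your proof is correct and essentially identical to the paper's own argument: the paper applies Lemma~\ref{lem:ses=>ses-of-G_betas} to the same short exact sequence, obtains the same diagram with exact rows, notes that $G_{\bar\beta}\to T_L$ is injective because $\bar\beta$ has finite cokernel, and then concludes by the snake lemma (which is exactly your diagram chase).
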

\begin{proof}
  Consider the diagram below on the left. By Lemma \ref{lem:ses=>ses-of-G_betas}, the diagram below on the right has exact rows.
   \[\xymatrix@C-.5pc{
   0\ar[r] & 0\ar[d]\ar[r] & L\ar[d]^\beta \ar@{=}[r] & L\ar[d]^{\beta'}\ar[r] & 0\\
   0\ar[r] & N_\tor\ar[r] & N\ar[r] & N/N_\tor\ar[r] & 0
  }\qquad
  \xymatrix@C-.8pc{
   0\ar[r] & G_{0\to N_\tor}\ar[r]\ar[d] & G_\beta\ar[d]\ar[r] & G_{\beta'} \ar[r]\ar@{^(->}[d] & 0\\
   0\ar[r] & 0\ar[r] & T_L\ar@{=}[r] & T_L\ar[r] & 0
  }\]
  Since $\beta'$ has finite cokernel, the map $G_{\beta'}=D(\cok(\beta'^*))\to T_L$ is an injection. The result follows by applying the snake lemma to the diagram on the right.
\end{proof}
}

{\section*{Appendix B: Toric Isomorphisms}
\renewcommand{\thesection}{B}
\refstepcounter{section}
\label{sec:non-uniqueness-of-fans}

As mentioned in Warning \ref{warn:non-canonical-presentations}, non-isomorphic stacky fans can give rise to isomorphic toric stacks. The goal of this appendix is to prove Theorem \ref{thm:isomorphisms}, which characterizes when a morphism of non-strict stacky fans gives rise to an isomorphism of non-strict toric stacks. The proof appears at the end of this section.

\begin{definition}\label{def:Phi^-1(sigma)}
 Suppose $\Phi\colon \Sigma\to \Sigma'$ is a morphism of fans and $\sigma'\in \Sigma'$. The \emph{pre-image} of $\sigma'$, $\Phi^{-1}(\sigma')$, is the subfan of $\Sigma$ consisting of cones whose images lie in $\sigma'$.
\end{definition}
\begin{remark}\label{rmk:meaning-of-preimage}
 Suppose $f\colon X_\Sigma\to X_{\Sigma'}$ is the morphism of toric varieties corresponding to the map of fans $\Sigma\to \Sigma'$. The cone $\sigma'\in \Sigma'$ corresponds to an affine open subscheme $U_{\sigma'}=\spec(k[\sigma'^\vee\cap N'])\subseteq X_{\Sigma'}$, the complement of the divisors corresponding to rays not on $\sigma'$. The key property of $\Phi^{-1}(\sigma')$ is that $X_\Sigma\times_{X_{\Sigma'}}U_{\sigma'}$ is naturally the open subvariety $X_{\Phi^{-1}(\sigma')}\subseteq X_\Sigma$.
\end{remark}

\begin{theorem}\label{thm:isomorphisms}
 Let $(\Phi,\phi)\colon (\Sigma,\beta\colon L\to N)\to (\Sigma',\beta'\colon L'\to N')$ be a morphism of non-strict stacky fans, where $\cok\beta$ and $\cok\beta'$ are finite. The induced morphism $\X_{\Sigma,\beta}\to \X_{\Sigma',\beta'}$ is an isomorphism if and only if the following conditions hold:
 \begin{enumerate}
  \item $\phi$ is an isomorphism,
  \item for every cone $\sigma'\in \Sigma'$, $\Phi^{-1}(\sigma')$ is a single cone (in the sense of Notation \ref{not:sigma-fan}), and
  \item for every cone $\sigma'\in \Sigma'$, $\Phi$ induces a isomorphism of monoids $\Phi^{-1}(\sigma')\cap L\to \sigma'\cap L'$.
 \end{enumerate}
\end{theorem}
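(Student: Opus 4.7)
The plan is to prove both directions separately.

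For $(\Rightarrow)$, assuming $f := \X_{(\Phi,\phi)}$ is an isomorphism, I would extract the three conditions by restricting $f$ to suitable substacks. Restricting to the dense open (stacky) tori gives that $T_\phi \colon T_N \to T_{N'}$ is an iso; since $\cok\beta$ and $\cok\beta'$ are finite, both $T_N$ and $T_{N'}$ are honest tori, so $\phi$ is a lattice iso, giving (1). For (2), fix $\sigma' \in \Sigma'$; the open substack $[X_{\sigma'}/G_{\beta'}]$ is cohomologically affine (as $X_{\sigma'}$ is affine and $G_{\beta'}$ is linearly reductive), and by Remark \ref{rmk:meaning-of-preimage} its pre-image under $f$ is $[X_{\Phi^{-1}(\sigma')}/G_\beta]$, hence cohomologically affine as well. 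This forces $X_{\Phi^{-1}(\sigma')}$ to be affine, which for a subfan of $\Sigma$ occurs precisely when that subfan is a single cone $\sigma$; the surjectivity of $f$ then forces $\Phi(\sigma) = \sigma'$. For (3), the resulting iso $[X_\sigma/G_\beta] \to [X_{\sigma'}/G_{\beta'}]$ descends to an iso of good moduli spaces — the affine toric varieties $\spec k[\beta(\sigma)^\vee \cap N^*]$ and $\spec k[\beta'(\sigma')^\vee \cap N'^*]$ computed as in Lemma \ref{lem:quot-by-kernel-is-gms2} — and combining this with (1) forces $\phi$ to carry $\beta(\sigma) \cap N$ bijectively onto $\beta'(\sigma') \cap N'$. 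Pulling this iso back through $\beta$ and $\beta'$ (using that both have finite cokernel and tracking the generic stabilizer data) then yields the desired monoid iso.

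For $(\Leftarrow)$, assuming (1)--(3), I would use (1) to replace $\beta'$ with $\phi^{-1}\circ \beta'$, reducing to $\phi = \id_N$ and $\beta = \beta' \circ \Phi$. Being an iso is Zariski-local on the target, so it suffices to show that for each $\sigma' \in \Sigma'$, with $\sigma := \Phi^{-1}(\sigma')$ (a single cone by (2)), the induced morphism $[X_\sigma/G_\beta] \to [X_{\sigma'}/G_{\beta'}]$ is an iso. Let $M \subseteq L$ and $M' \subseteq L'$ be the saturations of the subgroups generated by $\sigma \cap L$ and $\sigma' \cap L'$ respectively. By (3), $\Phi$ restricts to an iso $A \colon M \xrightarrow\sim M'$ carrying $\sigma$ bijectively to $\sigma'$. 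Choosing complements $L = M \oplus C$ and $L' = M' \oplus C'$, and modifying $C$ to absorb the $M'$-component of $\Phi|_C$, I arrange $\Phi = A \oplus D$ for some $D \colon C \to C'$. This decomposes the morphism of stacky fans as a product of an iso on the $M$-side (which gives an iso of stacks by Proposition \ref{prop:product-of-property-P}) and a morphism of stacky tori on the complementary $C$--$C'$ side.

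The main obstacle will be the complementary factor: showing that the stacky torus associated to $(0,\, \beta|_C \colon C \to N)$ is isomorphic, via the morphism induced by $D$, to the stacky torus associated to $(0,\, \beta'|_{C'} \colon C' \to N)$. My plan is to invoke Lemma \ref{lem:adding-rank-to-L} together with the short exact sequence machinery of Appendix A (Lemmas \ref{lem:triangle=>ses-of-G_betas} and \ref{lem:ses=>ses-of-G_betas}): the compatibility $\beta = \beta' \circ \Phi$ together with the finiteness of $\cok\beta$ and $\cok\beta'$ implies that the induced cokernels of $\beta^*$ and $\beta'^*$ fit into compatible extensions, and tracing through these diagrams shows that the resulting map of $G$-torsors exhibits $[X_\sigma/G_\beta]$ and $[X_{\sigma'}/G_{\beta'}]$ as canonically isomorphic stacks, completing the proof.
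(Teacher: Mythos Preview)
Your proposal has the right overall shape, but there are two genuine gaps.

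\textbf{The forward direction, condition (3).} Your plan is to pass to good moduli spaces, obtain an isomorphism of the affine toric varieties $\spec k[\beta(\sigma)^\vee\cap N^*]\cong \spec k[\beta'(\sigma')^\vee\cap N'^*]$, and then ``pull back through $\beta$ and $\beta'$'' to recover the monoid isomorphism $\sigma\cap L\cong \sigma'\cap L'$. This last step does not work: the good moduli space only sees the image cone $\beta(\sigma)\subseteq N_\RR$, and many different pairs $(L,\sigma)$ can sit over the same image. The generic stabilizer is $G_{0\to N_\tor}$ by Lemma~\ref{lem:gen-stabilizer<->torsion}, which depends only on $N$, so ``tracking generic stabilizer data'' does not recover $L$. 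The paper's key idea here, which you are missing, is to probe with $\AA^1$: by Corollary~\ref{cor:points-in-fan<->maps-from-A^1}, toric morphisms $\AA^1\to \X_{\sigma,\beta}$ are in natural bijection with $\sigma\cap L$, so an isomorphism of toric stacks immediately yields the monoid bijection (Lemma~\ref{lem:iso->iso-of-monoids}). This is a one-line argument once you have the $\AA^1$-probe, and it bypasses the good moduli space entirely.

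\textbf{The backward direction.} After arranging $\Phi=A\oplus D\colon M\oplus C\to M'\oplus C'$, you assert that the morphism of stacky fans decomposes as a product and invoke Proposition~\ref{prop:product-of-property-P}. But a product decomposition in the sense of that proposition requires $N$ to split compatibly, i.e.\ $N=N_M\oplus N_C$ with $\beta(M)\subseteq N_M$ and $\beta(C)\subseteq N_C$; nothing in the hypotheses gives this. Your ``main obstacle'' paragraph implicitly acknowledges the difficulty, but the plan to invoke Lemma~\ref{lem:adding-rank-to-L} directly also fails: that lemma requires the smaller map $\beta|_M\colon M\to N$ to have finite cokernel, which it need not (the cone $\sigma$ could span a sublattice whose image in $N$ has large corank). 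The paper's fix (Proposition~\ref{prop:isos-of-affines}) is to enlarge $M$ to $L_1=L_\sigma + s(N_1)$, where $s$ is a section of the free part of $\beta(L)$; this $L_1$ is chosen precisely so that $\beta|_{L_1}$ has finite cokernel, at which point Lemmas~\ref{lem:adding-rank-to-L} and~\ref{lem:adding-to-L-closely} apply.

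A smaller issue: in your argument for (1), the claim that the dense open stacky tori are ``honest tori'' because $\cok\beta$ is finite is false when $N$ has torsion; the dense open is then a gerbe over a torus. The paper handles this by separately matching the torsion parts (via generic stabilizers, Lemma~\ref{lem:gen-stabilizer<->torsion}) and the free parts (via 1-parameter subgroups), then applying the 5-lemma.
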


\begin{notation}\label{not:sigma-fan}
 As a slight abuse of notation, we use the symbol $\sigma$ to denote a cone as well as the fan consisting of the cone $\sigma$ and all of its faces.
\end{notation}

\begin{definition}\label{def:cohomologically-affine}
 A non-strict toric stack $\X_{\Sigma,\beta}$ is \emph{cohomologically affine} if $X_\Sigma$ is affine (i.e.~if $\Sigma$ is a single cone $\sigma$).
\end{definition}

\begin{remark}\label{rmk:coh-affine-definitions-agree}
 More generally, an algebraic stack $\X$ is said to be cohomologically affine if the structure morphism $\X\to \spec k$ is cohomologically affine in the sense of Definition \ref{def:gms}. This is in agreement with Definition \ref{def:cohomologically-affine}. If a toric variety $X_{\sigma}$ is an affine toric variety and $G_\beta$ is an affine group, then $[X_\sigma/G_\beta]\to \spec k$ is cohomologically affine by \cite[Proposition 3.14]{Alper:good}.

 Conversely, suppose $\X_{\Sigma,\beta}=[X_\Sigma/G_\beta]\to \spec k$ is cohomologically affine. Since $X_{\Sigma}$ is affine over $\X_{\Sigma,\beta}$ (because it is a $G_\beta$-torsor), we have that $X_\Sigma\to \spec k$ is cohomologically affine, so $X_\Sigma$ is affine by Serre's criterion \cite[Corollary 5.2.2]{ega}. It follows that $\Sigma$ is a single cone.
\end{remark}

\begin{lemma}\label{lem:coh-affine-surjection}
 Let $(\Phi,\phi)\colon (\sigma,\beta)\to (\Sigma',\beta')$ be a morphism of non-strict stacky fans, with $\sigma$ a single cone. Suppose every torus orbit of $\X_{\Sigma',\beta'}$ is in the image of the induced morphism $\X_{(\Phi,\phi)}$ (e.g.~if $\X_{(\Phi,\phi)}$ is surjective). Then the map $\sigma\to \Sigma'$ is a surjection. In particular, $\Sigma'$ is a single cone, so $\X_{\Sigma',\beta'}$ is cohomologically affine.
\end{lemma}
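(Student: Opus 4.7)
The plan is to use the fact that any toric morphism out of an affine toric variety factors through an affine open. By the definition of a morphism of non-strict stacky fans, $\Phi(\sigma)$ is contained in some cone of $\Sigma'$; let $\sigma'\in\Sigma'$ be the smallest such cone (equivalently, the unique cone whose relative interior contains the image of the relative interior of $\sigma$). The standard toric dictionary (see Remark \ref{rmk:meaning-of-preimage}) then shows that the morphism $X_\sigma\to X_{\Sigma'}$ factors through the affine open $X_{\sigma'}\subseteq X_{\Sigma'}$. Passing to the $G_{\beta'}$-quotient, the induced morphism $\X_{(\Phi,\phi)}\colon \X_{\sigma,\beta}\to\X_{\Sigma',\beta'}$ factors through the open substack $\X_{\sigma',\beta'}=[X_{\sigma'}/G_{\beta'}]$.

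Next I would invoke the orbit hypothesis. Since the image of $\X_{(\Phi,\phi)}$ lies in $\X_{\sigma',\beta'}$, every torus orbit of $\X_{\Sigma',\beta'}$ must lie in $\X_{\sigma',\beta'}$. Because $G_{\beta'}$ acts on $X_{\Sigma'}$ through its image in $T_{L'}$, it preserves each $T_{L'}$-orbit; thus the stacky-torus orbits of $\X_{\Sigma',\beta'}$ are in natural bijection with the $T_{L'}$-orbits of $X_{\Sigma'}$, which in turn correspond to cones of $\Sigma'$. The orbit $O_{\tau'}$ associated to $\tau'\in\Sigma'$ lies in $X_{\sigma'}$ precisely when $\tau'$ is a face of $\sigma'$, so every cone of $\Sigma'$ must be a face of $\sigma'$. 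This shows that $\Sigma'$ is the single cone $\sigma'$, and that $\sigma\to\Sigma'$ surjects in the stated sense (every cone of $\Sigma'$ is a face of the image cone of $\sigma$).

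Finally, cohomological affineness of $\X_{\Sigma',\beta'}=[X_{\sigma'}/G_{\beta'}]$ follows from Remark \ref{rmk:coh-affine-definitions-agree}: $X_{\sigma'}$ is affine and $G_{\beta'}$ is diagonalizable, hence linearly reductive.

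The only real subtlety, I expect, is the non-strict case: one must be explicit that the action of $G_{\beta'}$ on $X_{\Sigma'}$ factors through $T_{L'}$ (via the map $G_{\beta'}\to D(L'^{*})=T_{L'}$ from Definition \ref{def:G_beta}), so that $G_{\beta'}$ automatically preserves each $T_{L'}$-orbit and the identification of torus orbits of $\X_{\Sigma',\beta'}$ with cones of $\Sigma'$ remains valid. Once this is noted, the rest of the argument is formal.
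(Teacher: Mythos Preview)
Your route---factor through the affine open $X_{\sigma'}$ for $\sigma'$ the smallest cone containing $\Phi(\sigma)$, then use the orbit hypothesis to force every orbit into $\X_{\sigma',\beta'}$---is different from the paper's and cleanly shows that $\Sigma'$ is the single cone $\sigma'$. The paper argues instead via the orbit--cone correspondence directly: each $O_{\tau'}$ being in the image forces the relative interior of $\tau'$ to meet $\Phi$ applied to some face of $\sigma$; applying this to rays and using convexity of $\Phi(\sigma)$ gives $|\Sigma'|\subseteq\Phi(\sigma)$, from which everything follows.

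There is, however, a genuine gap in your version. The lemma's primary conclusion is that $\sigma\to\Sigma'$ is a \emph{surjection}, meaning $\Phi(\sigma)=|\Sigma'|$; this is how the lemma is used later (e.g.\ ``by Lemma~\ref{lem:coh-affine-surjection}, $\Phi(\sigma)=\sigma'$'' in the proof of Theorem~\ref{thm:main-gms}, and the deduction that rays map onto rays in Lemma~\ref{lem:canonical-stack-affine}). You show only that every cone of $\Sigma'$ is a face of $\sigma'$, the smallest cone \emph{containing} $\Phi(\sigma)$; you do not show $\Phi(\sigma)=\sigma'$. Your parenthetical conflates $\sigma'$ with ``the image cone of $\sigma$'', but a priori $\Phi(\sigma)\subsetneq\sigma'$ is possible---imagine $\sigma$ a ray mapping into the interior of a $2$-dimensional $\sigma'$. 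That example violates the orbit hypothesis, of course, but your argument never uses the hypothesis in a way that excludes it. The fix is short: once you know $\Sigma'=\sigma'$, apply the orbit hypothesis to each ray $\rho$ of $\sigma'$ to obtain a face $\tau\le\sigma$ with $\Phi(\tau)$ a nonzero subset of $\rho$, hence $\rho\subseteq\Phi(\sigma)$; since $\sigma'$ is generated by its rays, $\sigma'\subseteq\Phi(\sigma)$ and equality holds.
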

\begin{proof}
 The cones of $\Sigma'$ correspond to torus orbits of $X_{\Sigma'}$ and $\X_{\Sigma',\beta'}$. We see that every torus orbit of $X_{\Sigma'}$ is in the image of $X_\sigma$. Thus, the relative interior of every cone of $\Sigma'$ contains the image of some face of $\sigma$. Therefore $\Phi(\sigma)$ intersects the relative interiors of all cones, and in particular all rays, of $\Sigma'$. Since $\Phi(\sigma)$ is a convex polyhedral cone, it follows that the $\Phi(\sigma)$ is the cone generated by the rays of $\Sigma'$. In particular, $\Sigma'\subseteq \Phi(\sigma)$.
\end{proof}
\begin{remark}
 The cohomological affineness condition in Lemma \ref{lem:coh-affine-surjection} cannot be removed. For example, let $\Sigma =
 \raisebox{-2pt}{\begin{tikzpicture}[scale=.5,thick]
   \draw[<->] (1,0) -- (0,0) -- (0,1);
   \draw[->] (0,0) -- (1,1);
 \end{tikzpicture}}$ and $\Sigma'=
 \raisebox{-2pt}{\begin{tikzpicture}[scale=.5,thick]
   \filldraw[draw=none,fill=lightgray] (0,0) rectangle (1,1);
   \draw[<->] (1,0) -- (0,0) -- (0,1);
 \end{tikzpicture}}$.
 Then $X_\Sigma$ is the blowup of $\AA^2$ at the origin, minus the two torus-invariant points of the exceptional divisor, and $X_{\Sigma'}$ is $\AA^2$. The natural map $X_\Sigma\to X_{\Sigma'}$ is surjective, but the map on fans is not.
 \end{remark}

\begin{definition}\label{def:pointed}
 An affine toric variety $X_\sigma$ is \emph{pointed} if it contains a torus-invariant point. Alternatively, $X_\sigma$ is pointed if $\sigma$ spans the ambient lattice $L$. If $(\sigma,\beta)$ is a non-strict stacky fan and $X_\sigma$ is pointed, we say that $\X_{\sigma,\beta}$ is a \emph{pointed non-strict toric stack}. In this case, we say that $(\sigma,\beta)$ is a \emph{pointed non-strict stacky fan}.
\end{definition}
Note that a pointed affine toric variety has a \emph{unique} torus-invariant point.

\begin{remark}\label{rmk:canonical-pointed-subvariety}
 Note some immediate consequences of the equivalence of categories between toric varieties and fans. Any affine toric variety $X_\sigma$ has a canonical pointed toric subvariety. Explicitly, let $L_\sigma\subseteq L$ be the sublattice spanned by $\sigma$. Then $X_{\sigma,L_\sigma}$ is pointed, and the inclusion $L_\sigma\to L$ induces a toric closed immersion $X_{\sigma,L_\sigma}\to X_{\sigma,L}$. We have that $X_{\sigma,L}$ is (non-canonically) isomorphic to the product of $X_{\sigma,L_\sigma}$ and the torus $T_{L/L_\sigma}$.
\end{remark}

\begin{remark}\label{rmk:splitting-off-torus}
 Similarly, if $(\sigma,\beta\colon L\to N)$ is a non-strict stacky fan, with $\sigma$ a single cone, there is a canonical morphism from a pointed toric stack. Let $L_\sigma\subseteq L$ be the saturated sublattice generated by $\sigma$, let $N_\sigma=\sat_N(\beta(L_\sigma))$, and let $\beta_\sigma\colon L_\sigma\to N_\sigma$ be the morphism induced by $\beta$. Then we have a morphism of non-strict stacky fans $(\sigma,\beta_\sigma)\to (\sigma,\beta)$. We see that $\X_{\sigma,\beta}$ is (non-canonically) isomorphic to the product of the pointed non-strict toric stack $\X_{\sigma,\beta_\sigma}$ and the stacky torus $\X_{\ast,L/L_\sigma\to N/N_\sigma}$, where $\ast$ is the trivial fan.
\end{remark}
\begin{remark}\label{rmk:pointed-factor-universal-property}
 Note that $\X_{\sigma,\beta_\sigma}$, as defined in Remark \ref{rmk:splitting-off-torus}, has the following property. Any toric morphism to $\X_{\sigma,\beta}$ from a pointed toric stack factors uniquely through $\X_{\sigma,\beta_\sigma}$. This follows immediately from the fact that any toric morphism from a pointed toric stack to a stacky torus must be trivial.
\end{remark}

\begin{lemma}\label{lem:toric-torsor-over-pointed}
 Let $X_\sigma$ be a pointed affine toric variety, and let $Y\to X_\sigma$ be a toric morphism from a toric variety making $Y$ into a $G$-torsor over $X_\sigma$ for some group $G$. Then $Y\to X_\sigma$ is a trivial torsor and $G$ is a torus. In particular, $Y\cong G\times X_\sigma$ as a toric variety, so there is a canonical toric section $X_\sigma\to Y$.
\end{lemma}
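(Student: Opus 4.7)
The plan is as follows. First, since $Y\to X_\sigma$ is a $G$-torsor compatible with the toric morphism structure, restricting to the dense tori identifies $G$ with $\ker(T_Y\to T_L)$, a diagonalizable closed subgroup of the dense torus of $Y$. In particular $G$ is affine, so $f\colon Y\to X_\sigma$ is an affine morphism, and as $X_\sigma$ is affine, $Y$ is affine. Hence $\Sigma_Y$ consists of a single cone $\tau$ on a lattice $L_Y$, and $f$ is induced by a lattice homomorphism $\Phi\colon L_Y\to L$ with $\Phi(\tau)\subseteq\sigma$ and $\cok\Phi$ finite (by surjectivity of $f$; cf.\ Remark~\ref{rmk:cok-finite-iff-dual-injective}).

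Next, I would invoke the classical vanishing $\pic(X_\sigma)=0$ for any pointed affine toric variety, which follows from the contraction of $X_\sigma$ onto the fixed point $0$ by an attractive one-parameter subgroup in the interior of $\sigma$. Since $G$ is diagonalizable, the structure theorem for diagonalizable groups together with this vanishing yields $H^1_{\text{fppf}}(X_\sigma,G)=0$, so the $G$-torsor $Y\to X_\sigma$ is trivial: $Y\cong G\times X_\sigma$ as schemes over $X_\sigma$. Since $Y$ is a toric variety, in particular irreducible, $G$ must be connected, and combined with diagonalizability this shows $G$ is a torus. In particular $\Phi$ is surjective, so the short exact sequence $0\to\ker\Phi\to L_Y\to L\to 0$ has free quotient, its dual $0\to L^*\to L_Y^*\to(\ker\Phi)^*\to 0$ splits, and one obtains a \emph{toric} isomorphism $Y\cong G\times X_\sigma$.

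Finally, for the canonical toric section, I would apply Remark~\ref{rmk:canonical-pointed-subvariety} to the affine toric variety $Y$ to produce a canonical pointed closed toric subvariety $Y'=X_{\tau,L_\tau}\hookrightarrow Y$, where $L_\tau$ is the saturated sublattice of $L_Y$ spanned by $\tau\cap L_Y$. Under any toric trivialization $Y\cong G\times X_\sigma$, the sublattice $L_\tau$ corresponds to $0\oplus L$ and $Y'$ to $\{e_G\}\times X_\sigma$; hence the composition $g\colon Y'\hookrightarrow Y\xrightarrow{f}X_\sigma$ is an isomorphism of toric varieties, and $X_\sigma\xrightarrow{g^{-1}}Y'\hookrightarrow Y$ is the desired canonical toric section, independent of the (non-canonical) choice of trivialization. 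The main obstacle I expect is cleanly justifying the vanishing $\pic(X_\sigma)=0$, so that $H^1_{\text{fppf}}(X_\sigma,G)=0$; this can be handled either by citing the standard toric geometry literature (e.g., \cite[\S 4.2]{cls}) or by a direct argument using that the attractive one-parameter subgroup in the interior of $\sigma$ equivariantly contracts $X_\sigma$ onto the fixed point $0$.
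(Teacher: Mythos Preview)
Your approach is correct and takes a genuinely different route from the paper's. The paper first proves that $G$ is a torus by a fixed-point argument: writing $G = G_0 \times \GG_m^r$ with $G_0$ finite, it forms $Y_0 = Y/\GG_m^r$, observes that the fiber of the $G_0$-torsor $Y_0 \to X_\sigma$ over the torus-fixed point is a finite $T_{Y_0}$-invariant set, hence consists of torus-fixed points, and uses that an affine toric variety has at most one such point to force $G_0$ trivial; only then does it invoke $\pic(X_\sigma)=0$ to trivialize the resulting $\GG_m^r$-torsor. You reverse the logic: you trivialize the $G$-torsor directly for arbitrary diagonalizable $G$ via $H^1_{\text{fppf}}(X_\sigma,G)=0$, and then read off that $G$ is connected (hence a torus) from the irreducibility of $Y \cong G \times X_\sigma$. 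Your route is more uniform and sidesteps the orbit combinatorics, but it needs a slightly stronger cohomological input than you state: to kill $H^1_{\text{fppf}}(X_\sigma,\mu_n)$ via Kummer you need not only $\pic(X_\sigma)=0$ but also that $\Gamma(X_\sigma,\O^*)=k^*$ is divisible, which holds precisely because $X_\sigma$ is \emph{pointed} (so $\sigma^\vee\cap L^*$ has no nontrivial units) and $k$ is algebraically closed---you should make this explicit. One further caution: your claim that a splitting of $0\to\ker\Phi\to L_Y\to L\to 0$ directly yields a \emph{toric} isomorphism $Y\cong G\times X_\sigma$ is not quite justified, since an arbitrary splitting need not carry the cone $\tau$ into the $L$-summand; it is cleaner to bypass this and argue, as both you and the paper ultimately do, via the canonical pointed subvariety $X_{\tau,L_\tau}\hookrightarrow Y$ of Remark~\ref{rmk:canonical-pointed-subvariety}.
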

\begin{proof}
 We have that $G$ is the kernel of the homomorphism of tori induced by $Y\to X_\sigma$, so it is diagonalizable. We decompose $G$ as a product of a finite group $G_0$ and a torus $\GG_m^r$.

 We then have that $Y_0 = Y/\GG_m^r$ is a toric variety which is a $G_0$-torsor over $X_\sigma$. The fiber over the torus-invariant point of $X_\sigma$ is then a torus invariant finite subset of $Y_0$. A torus has no finite-index subgroups, so any finite torus-invariant subset of a toric variety must consist of \emph{fixed points} of the torus action. On the other hand, $Y_0$ is affine over $X_\sigma$, so it is affine, so it contains at most one torus fixed point. Therefore, $G_0$ must be trivial, so $G\cong\GG_m^r$.

 We have that $G$-torsors over $X_\sigma$ are parametrized by $H^1(X_\sigma,G)\cong H^1(X_\sigma,\GG_m)^r \cong \pic(X_\sigma)^r$. By \cite[Proposition 4.2.2]{cls}, we have that $\pic(X_\sigma)=0$, so all $\GG_m^r$-torsors on $X_\sigma$ are trivial. It follows that $Y=\GG_m^r\times X_\sigma$. By Remark \ref{rmk:canonical-pointed-subvariety} there is a canonical toric section.
\end{proof}

\begin{corollary}\label{cor:morphisms-from-pointed}
 Let $\X_{\sigma,\beta\colon L\to N}$ be a pointed non-strict toric stack, where $\sigma$ spans the lattice $L$. Let $f\colon \X_{\sigma,\beta}\to \X_{\Sigma',\beta'}$ be a homomorphism of toric stacks. Then $f$ is induced by a morphism of stacky fans $(\sigma,\beta)\to (\Sigma',\beta')$.
\end{corollary}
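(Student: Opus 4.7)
The approach is to retrace the construction from the proof of Theorem \ref{thm:morphisms-come-from-fans} and then use the pointedness of $X_\sigma$ to select a canonical toric section, which collapses the auxiliary stacky fan back onto $(\sigma,\beta)$. Concretely, form $Y=X_\sigma\times_{\X_{\Sigma',\beta'}}X_{\Sigma'}$, let $T_0$ be the connected component of the identity in $T_L\times_{T_{N'}}T_{L'}$, and let $Y_0\subseteq Y$ be the component containing $T_0$. As in the proof of Theorem \ref{thm:morphisms-come-from-fans}, $Y_0$ is a toric variety with fan $(\Sigma_0,L_0)$, equipped with a morphism of fans $\Phi\colon \Sigma_0\to \sigma$, and $Y_0\to X_\sigma$ is a toric $G_\Phi$-torsor; setting $\beta_0=\beta\circ\Phi$ and letting $(\Phi',\phi)\colon (\Sigma_0,\beta_0)\to (\Sigma',\beta')$ be the second morphism produced there, one has $f=\X_{(\Phi',\phi)}\circ \X_{(\Phi,\id_N)}^{-1}$.

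Now apply Lemma \ref{lem:toric-torsor-over-pointed} to the toric $G_\Phi$-torsor $Y_0\to X_\sigma$: since $X_\sigma$ is pointed, $G_\Phi$ is a torus and there is a canonical toric section $X_\sigma\to Y_0$. On fans this section is a homomorphism $s_*\colon L\to L_0$ carrying cones of $\sigma$ into cones of $\Sigma_0$ and satisfying $\Phi\circ s_*=\id_L$. Define $\Psi=\Phi'\circ s_*\colon L\to L'$. The computation
\begin{equation*}
\beta_0\circ s_* \;=\; \beta\circ\Phi\circ s_* \;=\; \beta
\end{equation*}
shows that $(s_*,\id_N)\colon (\sigma,\beta)\to (\Sigma_0,\beta_0)$ is a morphism of stacky fans, and composing with $(\Phi',\phi)$ yields the morphism of stacky fans $(\Psi,\phi)\colon (\sigma,\beta)\to (\Sigma',\beta')$ with $\beta'\circ\Psi=\phi\circ\beta_0\circ s_*=\phi\circ\beta$.

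Finally, the composition $(\Phi,\id_N)\circ (s_*,\id_N)=(\id_L,\id_N)$ induces the identity on $\X_{\sigma,\beta}$; since $\X_{(\Phi,\id_N)}$ is an isomorphism, $\X_{(s_*,\id_N)}$ must be its inverse, and therefore
\begin{equation*}
\X_{(\Psi,\phi)} \;=\; \X_{(\Phi',\phi)}\circ \X_{(s_*,\id_N)} \;=\; \X_{(\Phi',\phi)}\circ \X_{(\Phi,\id_N)}^{-1} \;=\; f,
\end{equation*}
as desired. The only non-formal step is invoking Lemma \ref{lem:toric-torsor-over-pointed} to produce $s_*$; everything else is diagram chasing against the structure already extracted from Theorem \ref{thm:morphisms-come-from-fans}. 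The main potential subtlety is bookkeeping around the non-strict case, but the construction of $Y_0$ and the identification $Y_0\to X_\sigma$ as a toric $G_\Phi$-torsor never used finiteness of $\cok\beta$, so the argument goes through verbatim.
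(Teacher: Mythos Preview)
Your proof is correct and follows essentially the same approach as the paper: retrace the construction of Theorem \ref{thm:morphisms-come-from-fans} to obtain the toric $G_\Phi$-torsor $Y_0\to X_\sigma$, apply Lemma \ref{lem:toric-torsor-over-pointed} to produce a canonical toric section, and use that section to split the morphism of stacky fans $(\Phi,\id_N)$. The paper's proof is terser, simply noting that the section of fans yields a morphism $(\sigma,\beta)\to(\Sigma',\beta')$ inducing $f$, whereas you spell out the verification that $\X_{(s_*,\id_N)}=\X_{(\Phi,\id_N)}^{-1}$ explicitly; but the underlying argument is identical.
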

\begin{proof}
 Following the proof (and notation) of Theorem \ref{thm:morphisms-come-from-fans}, we see that there is toric variety $Y_0$ with a toric morphism $Y_0\to X_\sigma$ making $Y_0$ into a $G_\Phi$-torsor over $X_\sigma$. By Lemma \ref{lem:toric-torsor-over-pointed}, there is a canonical toric section $s$. This toric section induces a section of the morphism of stacky fans $(\Phi,\id_N)\colon (\Sigma_0,\beta_0)\to (\sigma,\beta)$.
 \[\xymatrix{
  L \ar[d]_\beta \ar@/_/[r] \POS p+(0,.7) *+{\sigma}="s" & L_0 \ar[l]_{\Phi}\ar[r]\ar[d]^{\beta_0} \POS p+(0,.7) *+{\Sigma_0}="s0" \ar "s" \ar@{<-}@/_/_{s} "s" & L'\ar[d]^{\beta'} \POS p+(0,.7) *+{\Sigma'} \ar@{<-} "s0"\\
  N\ar@{=}[r] & N\ar[r]^\phi & N'
 }\]
 The composition $(\sigma,\beta)\to (\Sigma',\beta')$ then induces the morphism $f$.
\end{proof}

\begin{corollary}\label{cor:points-in-fan<->maps-from-A^1}
 Let $(\Sigma,\beta\colon L\to N)$ be a non-strict stacky fan. There is a natural bijection between toric morphisms $\AA^1\to \X_{\Sigma,\beta}$ and elements of $L$ which lie in the support of $\Sigma$.
\end{corollary}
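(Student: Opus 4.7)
The plan is to identify $\AA^1$ with the pointed toric variety $\X_{\rho, \id_\ZZ}$, where $\rho \subseteq \ZZ$ is the non-negative ray (Example~\ref{eg:toric-varieties}), and then invoke Corollary~\ref{cor:morphisms-from-pointed}. A morphism of non-strict stacky fans $(\Phi, \phi) \colon (\rho, \id_\ZZ) \to (\Sigma, \beta)$ consists of a map $\Phi \colon \ZZ \to L$ with $\Phi(\rho)$ contained in some cone of $\Sigma$, together with the forced map $\phi = \beta \circ \Phi$; such data is completely equivalent to an element $\ell := \Phi(1) \in L$ lying in the support of $\Sigma$. Writing $f_\ell \colon \AA^1 \to \X_{\Sigma, \beta}$ for the induced toric morphism, this defines a map $\ell \mapsto f_\ell$ which I claim is a bijection.

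For surjectivity, since $\AA^1$ is pointed, Corollary~\ref{cor:morphisms-from-pointed} shows that every toric morphism $\AA^1 \to \X_{\Sigma, \beta}$ is induced by a morphism of stacky fans out of $(\rho, \id_\ZZ)$, hence by the discussion above by some element $\ell \in L \cap |\Sigma|$.

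The main obstacle will be injectivity: showing that distinct $\ell_1 \neq \ell_2$ in $L \cap |\Sigma|$ yield non-isomorphic toric morphisms. Each $f_{\ell_i}$ lifts to a toric map $g_i \colon \AA^1 \to X_\Sigma$ determined by $\ell_i$, and a $2$-isomorphism $f_{\ell_1} \simeq f_{\ell_2}$ of the composites with $X_\Sigma \to [X_\Sigma/G_\beta] = \X_{\Sigma, \beta}$ corresponds to an element $h \in G_\beta(\AA^1)$ satisfying $g_2 = h \cdot g_1$, where $G_\beta$ acts on $X_\Sigma$ through the natural homomorphism $G_\beta \to T_L$. The key rigidity fact I will establish is that any morphism $\AA^1 \to G_\beta$ is constant: since $G_\beta$ is a finite-type diagonalizable group, $k[G_\beta]$ is a group algebra $k[D(G_\beta)]$, and the corresponding coordinate functions must be units in $k[s]$, hence in $k^\times$. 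So $h$ is constant. Toricness of each $g_i$ gives $g_i(1) = e \in T_L$, so evaluating $g_2 = h \cdot g_1$ at $1 \in \AA^1$ forces the image of $h$ in $T_L$ to be $e$, making the action of $h$ on $X_\Sigma$ trivial. Therefore $g_1 = g_2$, so $\Phi_{\ell_1} = \Phi_{\ell_2}$ and $\ell_1 = \ell_2$, completing the bijection.
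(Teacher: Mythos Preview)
Your proof is correct and follows the same strategy as the paper: identify $\AA^1$ with the pointed stacky fan $(\rho,\id_\ZZ)$ and invoke Corollary~\ref{cor:morphisms-from-pointed}. The paper's proof is a single sentence citing that corollary, so your argument is more detailed but not different in spirit.

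Where you go beyond the paper is in the injectivity direction. Corollary~\ref{cor:morphisms-from-pointed} only asserts that every toric morphism is \emph{induced} by some morphism of stacky fans; it does not claim uniqueness. Your argument that any map $\AA^1\to G_\beta$ is constant (since $G_\beta$ is diagonalizable and $k[s]^\times=k^\times$), together with the observation that toric lifts send $1\mapsto e$, cleanly pins down why distinct $\ell$'s cannot give $2$-isomorphic morphisms to $[X_\Sigma/G_\beta]$. The paper treats this as evident, but your explicit verification is a genuine improvement in rigor, especially since $\beta$ need not be injective (so the restriction to tori alone does not separate the $f_\ell$).
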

\begin{proof}
 This follows immediately from Corollary \ref{cor:morphisms-from-pointed} and the usual description of the fan of $\AA^1$, namely $(\raisebox{3pt}{\tikz[scale=.5] \draw[thick,->] (0,0) -- (1,0);}, \id\colon \ZZ\to \ZZ)$.
\end{proof}

\begin{lemma}\label{lem:iso->iso-of-monoids}
 Let $(\Phi,\phi)\colon (\sigma,\beta\colon L\to N)\to (\sigma',\beta'\colon L'\to N')$ be a morphism of non-strict stacky fans, with $\sigma$ and $\sigma'$ single cones. Suppose the induced morphism $\X_{(\Phi,\phi)}$ is an isomorphism. Then $\Phi$ induces an isomorphism of monoids $\sigma\cap L\to \sigma'\cap L'$.
\end{lemma}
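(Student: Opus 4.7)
The plan is to use Corollary \ref{cor:points-in-fan<->maps-from-A^1} to reduce the statement to an elementary manipulation of hom-sets. By that corollary, applied to both $(\sigma,\beta)$ and $(\sigma',\beta')$, there are natural bijections between toric morphisms $\AA^1 \to \X_{\sigma,\beta}$ and lattice points of $\sigma \cap L$ (which is all of the support of the fan, since $\sigma$ is a single cone), and similarly between toric morphisms $\AA^1 \to \X_{\sigma',\beta'}$ and elements of $\sigma'\cap L'$. Since $\X_{(\Phi,\phi)}$ is an isomorphism, and in particular a toric morphism, post-composition with it defines a bijection between these two sets of toric morphisms, which under the bijections above transports to a bijection $\sigma\cap L \to \sigma'\cap L'$.

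The next step is to identify this induced bijection with the restriction of $\Phi$. By Corollary \ref{cor:morphisms-from-pointed}, the bijection of Corollary \ref{cor:points-in-fan<->maps-from-A^1} sends $l \in \sigma\cap L$ to the toric morphism induced by the stacky fan morphism whose $L$-component is the homomorphism $\ZZ \to L$ sending $1$ to $l$ (forcing the $N$-component to be $1\mapsto \beta(l)$). Post-composing with $(\Phi,\phi)$ yields the stacky fan morphism from $(\text{ray},\id_\ZZ)$ to $(\sigma',\beta')$ whose $L'$-component sends $1$ to $\Phi(l)$, and this in turn induces the composite toric morphism. Under the bijection of Corollary \ref{cor:points-in-fan<->maps-from-A^1} for $\X_{\sigma',\beta'}$, this stacky fan morphism corresponds to $\Phi(l) \in \sigma'\cap L'$. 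Hence $\Phi$ restricts to a bijection $\sigma\cap L \to \sigma'\cap L'$, and being the restriction of a group homomorphism, it is automatically a monoid isomorphism.

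The main potential subtlety is that composition of toric morphisms must correspond to composition of the underlying stacky fan morphisms, so that the naturality in Corollary \ref{cor:points-in-fan<->maps-from-A^1} delivers what we want. This is, however, immediate from the way a toric morphism is constructed from a morphism of stacky fans in Section \ref{sec:morphisms}, so beyond tracing through the definitions the argument should be pure bookkeeping.
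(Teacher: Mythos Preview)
Your proposal is correct and follows essentially the same approach as the paper's proof: both invoke Corollary~\ref{cor:points-in-fan<->maps-from-A^1} to identify $\sigma\cap L$ with toric morphisms $\AA^1\to\X_{\sigma,\beta}$, use the isomorphism $\X_{(\Phi,\phi)}$ to obtain a bijection, and observe that this bijection is the monoid map induced by $\Phi$. You have simply spelled out in more detail the identification of the induced bijection with the restriction of $\Phi$, which the paper leaves implicit.
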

\begin{proof}
 By Corollary \ref{cor:points-in-fan<->maps-from-A^1}, elements of the monoid $\sigma\cap L$ are in bijection with toric morphisms from $\AA^1$ to $\X_{\sigma,\beta}$. The isomorphism $\X_{(\Phi,\phi)}$ induces a bijection of these sets. On the other hand, the induced morphism is a morphism of monoids.
\end{proof}


\begin{lemma}\label{lem:adding-rank-to-L}
 Let $\Sigma$ be a fan on a lattice $L$ and let $\beta\colon L\to N$ be a finite-cokernel morphism to a finitely generated abelian group. Let $L_0$ be a lattice and $\beta_0\colon L_0\to N$ any homomorphism. Let $\Sigma\times 0$ be the fan $\Sigma$ regarded as a fan on $L\oplus L_0$, supported entirely on $L$. Then the morphism of non-strict stacky fans $(\Sigma,\beta\colon L\to N)\to (\Sigma\times 0,\beta\oplus \beta_0\colon L\oplus L_0\to N)$ induces an isomorphism $\X_{\Sigma,\beta}\to \X_{\Sigma\times 0,\beta\oplus \beta'}$.
\end{lemma}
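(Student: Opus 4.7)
The plan is to apply Theorem~\ref{thm:isomorphisms} to the morphism of non-strict stacky fans $(\Phi,\id_N)\colon (\Sigma,\beta)\to (\Sigma\times 0,\beta\oplus\beta_0)$, where $\Phi\colon L\to L\oplus L_0$ is the inclusion of the first factor. First I should verify that this is indeed a morphism of non-strict stacky fans: the commutativity $(\beta\oplus\beta_0)\circ\Phi=\id_N\circ\beta$ is immediate, and $\Phi$ sends each cone $\sigma\in\Sigma$ to the cone $\sigma\times\{0\}\in\Sigma\times 0$. Moreover, since the image of $\beta\oplus\beta_0$ contains the image of $\beta$, its cokernel is a quotient of $\cok\beta$, hence finite. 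So the hypotheses of Theorem~\ref{thm:isomorphisms} are satisfied.

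Next I would verify the three conditions of Theorem~\ref{thm:isomorphisms}. Condition (1) is immediate since $\phi=\id_N$. For conditions (2) and (3), the key observation is that every cone of $\Sigma\times 0$ has the form $\sigma\times\{0\}$ for a unique $\sigma\in\Sigma$, because the fan $\Sigma\times 0$ is supported entirely on $L\times\{0\}\subseteq L\oplus L_0$. Hence $\Phi^{-1}(\sigma\times\{0\})=\sigma$ is a single cone of $\Sigma$, establishing (2). For (3), both $\Phi^{-1}(\sigma\times\{0\})\cap L$ and $(\sigma\times\{0\})\cap(L\oplus L_0)$ are equal to the monoid $\sigma\cap L$ (viewed in $L$ or in $L\times\{0\}$ respectively), and the inclusion $\Phi$ identifies them.

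Given that Theorem~\ref{thm:isomorphisms} is already available in this appendix, the proof reduces to the straightforward verifications above, and there is no serious obstacle. If one preferred a self-contained argument that did not invoke Theorem~\ref{thm:isomorphisms}, the main work would be to analyze the group $G_{\beta\oplus\beta_0}$ directly: one would use $X_{\Sigma\times 0}=X_\Sigma\times T_{L_0}$ together with the dualization of $(\beta\oplus\beta_0)^*$ to exhibit a short exact sequence $0\to L_0^*\to D(G_{\beta\oplus\beta_0})\to D(G_\beta)\to 0$. Since $L_0^*$ is free, this sequence splits, yielding a split extension $1\to G_\beta\to G_{\beta\oplus\beta_0}\to T_{L_0}\to 1$; one would then observe that the $T_{L_0}$ quotient acts freely by translation on the second factor of $X_\Sigma\times T_{L_0}$, so that dividing out by it collapses that factor and leaves exactly $[X_\Sigma/G_\beta]$.
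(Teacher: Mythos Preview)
Your verification of the three conditions of Theorem~\ref{thm:isomorphisms} is correct, but the approach is circular. Although Theorem~\ref{thm:isomorphisms} is \emph{stated} before Lemma~\ref{lem:adding-rank-to-L} in Appendix~B, its proof is deferred to the end of the appendix and goes through Proposition~\ref{prop:isos-of-affines}, whose proof explicitly invokes Lemma~\ref{lem:adding-rank-to-L} (together with Lemma~\ref{lem:adding-to-L-closely}). So Theorem~\ref{thm:isomorphisms} is not logically available at this point; Lemma~\ref{lem:adding-rank-to-L} is one of the building blocks from which it is assembled.

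The self-contained alternative you sketch at the end is essentially the paper's actual argument, and it is the right thing to do. The paper formalizes the short exact sequence step by appealing to Lemma~\ref{lem:ses=>ses-of-G_betas} applied to the commutative diagram
\[\xymatrix@C-.5pc{
  0\ar[r] & L\ar[d]_\beta\ar[r] & L\oplus L_0\ar[d]^{\beta\oplus \beta_0} \ar[r] & L_0\ar[d]\ar[r] & 0\\
  0\ar[r] & N\ar@{=}[r] & N\ar[r] & 0\ar[r] & 0,
}\]
which (using that $\cok\beta$ is finite) yields the exact sequence $0\to G_\beta\to G_{\beta\oplus\beta_0}\to G_{L_0\to 0}\to 0$ with $G_{L_0\to 0}\cong T_{L_0}$. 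One then identifies $[X_{\Sigma\times 0}/T_{L_0}]\cong X_\Sigma$ and concludes $\X_{\Sigma\times 0,\beta\oplus\beta_0}=\bigl[[X_{\Sigma\times 0}/T_{L_0}]/G_\beta\bigr]\cong [X_\Sigma/G_\beta]=\X_{\Sigma,\beta}$. Your description of this via the split extension and the free $T_{L_0}$-action on the second factor is a fine way to say the same thing.
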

\begin{proof}
 Since $\Sigma\times 0$ is the product of $\Sigma$ on $L$ and the trivial fan on $L_0$, we have that $X_{\Sigma\times 0} = X_\Sigma\times T_{L_0}$. The diagram on the left has exact rows.
 \[\xymatrix@C-.5pc{
  0\ar[r] & L\ar[d]_\beta\ar[r] & L\oplus L_0\ar[d]^{\beta\oplus \beta_0} \ar[r] & L_0\ar[d]\ar[r] & 0\\
  0\ar[r] & N\ar@{=}[r] & N\ar[r] & 0\ar[r] & 0
 }\qquad
 \xymatrix@C-.8pc{
  0\ar[r] & G_\beta\ar[r]\ar[d] & G_{\beta\oplus \beta'}\ar[d]\ar[r] & G_{L_0\to 0} \ar[r]\ar[d]^\wr & 0\\
  0\ar[r] & T_L\ar[r] & T_L\oplus T_{L_0}\ar[r] & T_{L_0}\ar[r] & 0
 }\]
 Since $\cok\beta$ is finite, Lemma \ref{lem:ses=>ses-of-G_betas} implies that the second diagram has exact rows. We see that the induced morphism is then
 \begin{align*}
  \X_{\Sigma,\beta}=[X_\Sigma/G_\beta]\xrightarrow\sim \bigl[[X_{\Sigma\times 0}/T_{L_0}]/G_\beta\bigr] &=\bigl[[X_{\Sigma\times 0}/G_{L_0\to 0}]/G_\beta\bigr]\\
  & = [X_{\Sigma\times 0}/G_{\beta\oplus\beta_0}] = \X_{\Sigma\times 0,\beta\oplus\beta_0}.\qedhere
 \end{align*}
\end{proof}

\begin{remark}
 The condition that $\cok\beta$ is finite is necessary in the above argument. As a simple counterexample where $\cok\beta$ is not finite, consider the morphism of non-strict stacky fans $(0,\id)\colon (\Sigma,0\to \ZZ)\to (\Sigma,\id\colon \ZZ\to \ZZ)$, where $\Sigma$ is the trivial fan. This induces the morphism $B\GG_m\to \GG_m$.
\end{remark}

\begin{lemma}\label{lem:adding-to-L-closely}
 Let $\sigma$ be a cone on $L$. Suppose $(\Phi,\phi)\colon (\sigma,\beta\colon L\to N)\to (\Phi(\sigma),\beta'\colon L'\to N')$ is a morphism of non-strict stacky fans so that $\phi$ is an isomorphism, $\cok\Phi$ is finite, and so that $\Phi$ induces an isomorphism of monoids $(\sigma\cap L)\to (\Phi(\sigma)\cap L')$. Then the induced morphism $\X_{\sigma,\beta}\to \X_{\Phi(\sigma),\beta'}$ is an isomorphism.
\end{lemma}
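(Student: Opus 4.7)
The plan is to reduce to showing that $X_\sigma \to X_{\Phi(\sigma)}$ is a $G_\Phi$-torsor, and then exhibit the torsor structure by decomposing $\Phi$ into ``pointed'' and ``torus'' parts via the monoid-isomorphism hypothesis.

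\textbf{Step 1 (reduction to a torsor).} Since $\ker\Phi$ and $\ker\beta'$ are free (being subgroups of lattices) and $\cok\Phi$ is finite, Lemma \ref{lem:triangle=>ses-of-G_betas} applied to $L \xrightarrow{\Phi} L' \xrightarrow{\beta'} N'$ yields a short exact sequence
\[
  0 \to G_\Phi \to G_{\beta'\circ\Phi} \to G_{\beta'} \to 0.
\]
The relation $\beta'\circ\Phi = \phi\circ\beta$ and the fact that $\phi$ is an isomorphism give a quasi-isomorphism of mapping cones $C(\beta) \xrightarrow{\sim} C(\beta'\circ\Phi)$, hence $G_{\beta'\circ\Phi}\cong G_\beta$. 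Because $G_\beta \subseteq T_L$ is abelian, $G_\Phi$ is a normal subgroup, and
\[
  \X_{\sigma,\beta} = [X_\sigma/G_\beta] = \bigl[[X_\sigma/G_\Phi]/G_{\beta'}\bigr].
\]
It therefore suffices to show that $X_\sigma \to X_{\Phi(\sigma)}$ is a $G_\Phi$-torsor.

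\textbf{Step 2 (splitting $\Phi$).} Let $L_\sigma \subseteq L$ be the sublattice spanned by $\sigma$; by the standard argument $L_\sigma = L\cap\QQ\sigma$ is saturated in $L$. The hypothesis that $\Phi$ induces an isomorphism $\sigma\cap L \to \Phi(\sigma)\cap L'$ implies that $\Phi|_{L_\sigma}$ is injective, with image $\Phi(L_\sigma)$ equal to the group generated by $\Phi(\sigma\cap L)=\Phi(\sigma)\cap L'$; this image equals $L'\cap\QQ\Phi(\sigma)$ and is therefore saturated in $L'$. Hence $\Phi|_{L_\sigma}\colon L_\sigma \xrightarrow{\sim} \Phi(L_\sigma)$ is an isomorphism of lattices. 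Choose splittings $L = L_\sigma \oplus L_0$ and $L' = \Phi(L_\sigma)\oplus L'_0$. After replacing $L_0$ by its graph $L_0'' = \{(-(\Phi|_{L_\sigma})^{-1}(f(\ell_0)),\ell_0) : \ell_0\in L_0\}\subseteq L_\sigma\oplus L_0$ (where $f\colon L_0\to\Phi(L_\sigma)$ is the $\Phi(L_\sigma)$-component of $\Phi|_{L_0}$), we obtain a new splitting $L = L_\sigma \oplus L_0''$ preserving $\sigma\subseteq L_\sigma$, under which $\Phi = \Phi|_{L_\sigma}\oplus\Phi_0$ for some $\Phi_0\colon L_0''\to L'_0$. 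Finiteness of $\cok\Phi$ together with $\Phi|_{L_\sigma}$ an isomorphism forces $\cok\Phi_0$ to be finite as well.

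\textbf{Step 3 (torsor structure).} By Remark \ref{rmk:canonical-pointed-subvariety} the splittings give $X_\sigma \cong X_{\sigma,L_\sigma}\times T_{L_0''}$ and $X_{\Phi(\sigma)} \cong X_{\Phi(\sigma),\Phi(L_\sigma)}\times T_{L'_0}$, and the map $X_\sigma\to X_{\Phi(\sigma)}$ decomposes as the product of the map $X_{\sigma,L_\sigma}\to X_{\Phi(\sigma),\Phi(L_\sigma)}$ induced by the lattice isomorphism $\Phi|_{L_\sigma}$ (which is an isomorphism of pointed affine toric varieties, since it matches the toric monoids $\sigma\cap L_\sigma$ and $\Phi(\sigma)\cap\Phi(L_\sigma)$) and the surjection of tori $T_{\Phi_0}\colon T_{L_0''}\to T_{L'_0}$ (with kernel $G_{\Phi_0}$, hence a $G_{\Phi_0}$-torsor). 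A direct computation $\cok\Phi^* = 0\oplus\cok\Phi_0^*$ identifies $G_\Phi$ with $G_{\Phi_0}$, so $X_\sigma\to X_{\Phi(\sigma)}$ is a $G_\Phi$-torsor, which completes the proof.

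The hard part will be Step 2: extracting from the combinatorial monoid-isomorphism hypothesis the concrete structural assertion that $\Phi$ restricts to an isomorphism of $L_\sigma$ onto a saturated sublattice of $L'$. Once this is in hand, the decomposition of $\Phi$ into pointed and torus parts and the torsor conclusion are routine.
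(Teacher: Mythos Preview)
Your proof is correct and follows essentially the same approach as the paper: both decompose $L$ and $L'$ into the saturated sublattice $L_\sigma$ generated by $\sigma\cap L$ and a complement, use the monoid-isomorphism hypothesis to identify $\Phi|_{L_\sigma}$ as an isomorphism onto the corresponding sublattice of $L'$, invoke Lemma~\ref{lem:triangle=>ses-of-G_betas} to obtain $0\to G_\Phi\to G_\beta\to G_{\beta'}\to 0$, and conclude that quotienting $X_\sigma$ by $G_\Phi$ yields $X_{\Phi(\sigma)}$. Your Step~2 graph trick to make $\Phi$ block-diagonal is a welcome bit of explicitness---the paper's proof simply asserts the splitting $T_L\to T_{L'}$ decomposes as $(T_{L_\sigma}\xrightarrow{=}T_{L_\sigma})\oplus(T_{L_1}\twoheadrightarrow T_{L_1'})$ without spelling out why the off-diagonal part of $\Phi$ can be absorbed into the choice of complement.
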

\begin{proof}
 Let $L_\sigma\subseteq L$ be the sublattice generated by $\sigma\cap L$, and let $L_1$ be a direct complement. Let $X_\sigma$ denote the pointed affine toric variety corresponding to the cone $\sigma$, regarded as a fan on $L_\sigma$. The fan $\sigma$ (on $L$) is the product $\sigma\times 0$ on $L_\sigma\times L_1$, so $\X_{\sigma,\beta} = [(X_\sigma\times T_{L_1})/G_{\beta}]$.

 By assumption, the sublattice of $L'$ generated by $\Phi(\sigma)\cap L'$ is isomorphic to $L_\sigma$, and the isomorphism identifies $\sigma$ with $\Phi(\sigma)$. Let $L_1'$ be a direct complement to $L_\sigma$ in $L'$. As above, we have that $\X_{\Phi(\sigma),\beta'} = [(X_\sigma\times T_{L_1'})/G_{\beta'}]$.

 Note that $T_L=T_{L_\sigma}\oplus T_{L_1}$ and $T_{L'}=T_{L_\sigma}\oplus T_{L_1'}$. Since $\cok\Phi$ is finite, Lemma \ref{lem:triangle=>ses-of-G_betas} tells us that the following diagram has exact rows:
 \[\xymatrix@R-1.7pc{
  0\ar[r] & G_\Phi \ar[r]\ar@{=}[dd] & G_\beta \ar[r]\ar[dd] & G_{\beta'}\ar[r]\ar[dd] & 0\\ \\
  0\ar[r] & G_\Phi\dplus \ar[r] & T_{L_1}\dplus \ar[r] & T_{L_1'}\dplus \ar[r] & 0\\
  0\ar[r] & 0\ar[r] & T_{L_\sigma}\ar@{=}[r] & T_{L_\sigma}\ar[r] & 0
 }\]
 So we see that the toric morphism induced by $(\Phi,\phi)$ is the isomorphism
 \[
   \X_{\sigma,\beta} = [(X_\sigma\times T_{L_1})/G_\beta]=\bigl[[(X_\sigma\times T_{L_1})/G_\Phi]/G_{\beta'}\bigr] = [(X_\sigma\times T_{L_1'})/G_{\beta'}]=\X_{\Phi(\sigma),\beta'}. \qedhere
 \]
\end{proof}

\begin{lemma}\label{lem:product-of-isos-an-iso}
 For $i=0,1$, let $(\Phi_i,\phi_i)$ be a morphism of non-strict stacky fans. Then $(\Phi_0\times \Phi_1,\phi_0\times \phi_1)$ induces an isomorphism of non-strict stacky toric stacks if and only if each $(\Phi_i,\phi_i)$ does.
\end{lemma}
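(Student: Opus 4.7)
The plan is to recognize this lemma as a direct instance of Proposition \ref{prop:product-of-property-P}, applied with $\mathbf{P}$ taken to be the property ``is an isomorphism.'' Before invoking the proposition, I need to verify the two stability hypotheses. Stability under composition is immediate. For stability under base change, if $f\colon \X\to\Y$ is an isomorphism with inverse $g\colon \Y\to\X$, then for any morphism $\Z\to\Y$ the pulled-back inverse $g\times_\Y\Z$ provides an inverse to the base-change morphism $\X\times_\Y\Z\to\Z$, so the latter is again an isomorphism.

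With these checks in place, Proposition \ref{prop:product-of-property-P} yields both directions at once. Its forward direction gives: if each $(\Phi_i,\phi_i)$ induces an isomorphism, then so does the product, via the factorization $f\times g = (f\times \id)\circ(\id\times g)$ and the two stability properties above. Its reverse direction proceeds by base-changing the product morphism $\X_{(\Phi_0,\phi_0)}\times \X_{(\Phi_1,\phi_1)}$ along the $k$-point of $\X_{\Sigma_0',\beta_0'}$ given by the identity of the torus; this recovers $\X_{(\Phi_1,\phi_1)}$, which must therefore be an isomorphism. A symmetric argument handles $\X_{(\Phi_0,\phi_0)}$.

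There is no real obstacle: all the work has already been packaged into Proposition \ref{prop:product-of-property-P}, and this lemma is simply the instance of that general principle for the property of being an isomorphism. The entire proof is therefore one or two sentences citing the proposition and noting that isomorphism is stable under composition and base change.
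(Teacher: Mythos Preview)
Your proposal is correct and matches the paper's proof exactly: the paper also records this lemma as an immediate corollary of Proposition \ref{prop:product-of-property-P}. Your verification that isomorphisms are stable under composition and base change is routine but appropriate to include.
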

\begin{proof}
 This is an immediate corollary of Proposition \ref{prop:product-of-property-P}.
\end{proof}

Lemmas \ref{lem:adding-rank-to-L} and \ref{lem:adding-to-L-closely} can be combined and extended.
\begin{proposition}\label{prop:isos-of-affines}
 Let $(\Phi,\phi)\colon (\sigma,\beta\colon L\to N)\to (\sigma',\beta'\colon L'\to N')$ be a morphism of non-strict stacky fans, with $\sigma$ and $\sigma'$ single cones. Suppose
 \begin{enumerate}
  \item \label{condition1} $\phi$ is an isomorphism,
  \item $\Phi$ induces an isomorphism of the monoids $(\sigma\cap L)$ and $(\sigma'\cap L')$, and
  \item \label{condition3} $\phi\bigl(\sat_N\beta(L)\bigr) = \sat_{N'}\beta'(L')$.
 \end{enumerate}
 Then the induced morphism $\X_{\sigma,\beta}\to \X_{\sigma',\beta'}$ is an isomorphism.
\end{proposition}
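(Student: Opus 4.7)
The plan is to first reduce to the case where $\beta$ and $\beta'$ have finite cokernel, and then factor the resulting morphism through two intermediate stacky fans so that Lemmas \ref{lem:adding-rank-to-L} and \ref{lem:adding-to-L-closely} cover all three pieces of the composition. For the reduction, set $N_f = \sat_N\beta(L)$ and $N'_f = \sat_{N'}\beta'(L')$. Since saturation absorbs torsion, the quotients $N_0 := N/N_f$ and $N'_0 := N'/N'_f$ are lattices, and condition (\ref{condition3}) says $\phi(N_f) = N'_f$. I would pick a splitting $N' = N'_f \oplus N'_0$ and pull it back via $\phi$ to get a compatible splitting $N = N_f \oplus N_0$, so that $\phi = \phi_f \oplus \phi_0$ with both $\phi_f$ and $\phi_0$ isomorphisms. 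Because $\beta$ and $\beta'$ factor through $N_f$ and $N'_f$ respectively, $(\Phi,\phi)$ is identified with the product of $(\Phi,\phi_f)\colon (\sigma,\beta_f\colon L\to N_f)\to (\sigma',\beta'_f\colon L'\to N'_f)$ (which has finite cokernels) and $(0,\phi_0)\colon (\ast,0\to N_0)\to (\ast,0\to N'_0)$ (an isomorphism of stacky tori). By Lemma \ref{lem:product-of-isos-an-iso}, it suffices to handle the first of these.

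In this finite-cokernel setting, set $L'' := \Phi(L)\subseteq L'$ and $L''' := \sat_{L'}L''$, and factor $(\Phi,\phi_f)$ as
\[
(\sigma,\beta_f)\xrightarrow{(\Phi_1,\phi_f)} (\sigma',\beta'_f|_{L''})\xrightarrow{(\iota_1,\id)} (\sigma',\beta'_f|_{L'''})\xrightarrow{(\iota_2,\id)} (\sigma',\beta'_f),
\]
where $\Phi_1\colon L\twoheadrightarrow L''$ is the corestriction of $\Phi$ and $\iota_1,\iota_2$ are inclusions. Condition (2) of the proposition forces $\sigma'\cap L' = \Phi(\sigma\cap L)\subseteq L''$, so $\sigma'\cap L' = \sigma'\cap L'' = \sigma'\cap L'''$, and $\sigma'$ can legitimately be regarded as a cone in each of $L''$, $L'''$, and $L'$, with $\sigma' = \Phi_1(\sigma)$. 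The first map has $\phi_f$ an isomorphism, $\Phi_1$ surjective (hence of finite cokernel), and the required isomorphism of monoids, so Lemma \ref{lem:adding-to-L-closely} gives an isomorphism of stacks. The second map has $\id$ on $N'_f$, $\iota_1$ with torsion (hence finite) cokernel, and the same isomorphism of monoids, so Lemma \ref{lem:adding-to-L-closely} applies again. For the third map, $L'''$ is saturated in $L'$, so $L' = L''' \oplus L_{\mathrm{extra}}$ for some free complement; since $\beta'_f|_{L'''}$ contains $\beta'_f(L'') = \phi_f(\beta_f(L))$, which has finite index in $N'_f$, the restriction $\beta'_f|_{L'''}$ has finite cokernel and Lemma \ref{lem:adding-rank-to-L} applies.

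The main obstacle is the bookkeeping forcing the three-step factorization: $L'' = \Phi(L)$ need not be saturated in $L'$ nor of full rank, so neither Lemma \ref{lem:adding-rank-to-L} nor Lemma \ref{lem:adding-to-L-closely} applies directly to $(\Phi,\phi_f)$. Condition (2) is what guarantees that all lattice points of $\sigma'$ sit inside $L''$, making the intermediate stacks well-defined and the isomorphism-of-monoids hypotheses easy to verify at each step; condition (\ref{condition3}) enters only in the initial reduction, where it ensures that the splittings of $N$ and $N'$ can be chosen compatibly with $\phi$.
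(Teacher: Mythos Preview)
Your proof is correct. The reduction to the finite-cokernel case is essentially identical to the paper's (you split $N'$ first and pull back via $\phi$, the paper splits $N$ first and pushes forward; condition~(\ref{condition3}) makes these equivalent).

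In the finite-cokernel case, however, you take a genuinely different route. The paper works on the \emph{source} side: it builds a sublattice $L_1=L_\sigma+s(N_1)\subseteq L$ (where $L_\sigma$ is generated by $\sigma\cap L$ and $s$ splits the free part of $\beta(L)$), checks that $(\sigma,\beta|_{L_1})\to(\sigma,\beta)$ is an isomorphism via Lemmas~\ref{lem:adding-to-L-closely} and~\ref{lem:adding-rank-to-L}, observes that $\Phi|_{L_1}$ is injective so the same argument applies to $(\sigma,\beta|_{L_1})\to(\sigma',\beta')$, and concludes by a zig-zag. You instead work on the \emph{target} side, factoring the map itself through $L''=\Phi(L)$ and $L'''=\sat_{L'}L''$ to get a single chain of three morphisms. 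Your approach is arguably more transparent: there is no zig-zag, and the key observation that $\sigma'\cap L'\subseteq L''$ (forced by condition~(2)) makes each intermediate stacky fan and each monoid-isomorphism hypothesis immediate. The paper's approach has the minor conceptual advantage of exhibiting a single ``minimal'' presentation $(\sigma,\beta|_{L_1})$ mapping isomorphically to both ends, but it requires the extra verification that $\Phi|_{L_1}$ is injective.
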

\begin{proof}
 First we reduce to the case when $\cok\beta$ is finite. Let $N_0$ be a direct complement to $\sat_N \beta(L)$. Then we see that $(\Sigma,\beta\colon L\to N)$ is the product of $\bigl(\Sigma,\beta_1\colon L\to \sat_N \beta(L)\bigr)$ and $(\ast,\beta_0\colon 0\to N_0)$, where $\ast$ is the trivial fan (which contains only the zero cone). Condition (\ref{condition3}) implies that $\phi(N_0)$ is a direct complement to $\sat_{N'}\beta'(L')$, so the same argument shows that $(\Sigma',\beta'\colon L'\to N')$ is the product of $\bigl(\Sigma',\beta'_1\colon L'\to \sat_{N'} \beta'(L')\bigr)$ and $\bigl(\ast,\beta_0'\colon 0\to \phi(N_0)\bigr)$. We see that $(\Phi,\phi)$ is a product of $\bigl(\Phi\colon L\to L',\phi_1\colon \sat_N(\beta(L))\to\sat_{N'}(\beta'(L'))\bigr)$ and $(0,\phi|_{N_0})$. The latter is an isomorphism by conditions (\ref{condition1}) and (\ref{condition3}), so by Lemma \ref{lem:product-of-isos-an-iso}, we have reduced to the case where $N=\sat_N \beta(L)$ (i.e.~where $\cok\beta$ is finite).

 Let $L_\sigma$ be the sublattice of $L$ generated by $\sigma\cap L$. Let $N_1$ be the free part of $\beta(L)$, and choose a splitting $s\colon N_1\to L$. Let $L_1=L_\sigma+s(N_1)$ in $L$.

 Applying Lemmas \ref{lem:adding-to-L-closely} and \ref{lem:adding-rank-to-L} in succession, we see that the morphism induced by the composition $(\sigma,\beta|_{L_1})\to (\sigma,\beta|_{\sat_L L_1})\to (\sigma,\beta)$ is an isomorphism.

 Note that $\Phi|_{L_1}$ has no kernel, so we may identify $\Phi(L_1)$ with $L_1$. Then the same argument shows that $(\sigma,\beta|_{L_1})\to (\sigma',\beta')$ induces an isomorphism.

 We then have a factorization of morphisms of non-strict stacky fans $(\sigma,\beta|_{L_1})\to (\sigma,\beta)\xrightarrow{(\Phi,\phi)} (\sigma',\beta')$. Since the first morphism induces an isomorphism and the composite induces an isomorphism, it follows that $(\Phi,\phi)$ induces an isomorphism.
\end{proof}

We conclude this section with the proof of Theorem \ref{thm:isomorphisms}.

\begin{proof}[Proof of Theorem \ref{thm:isomorphisms}]
 Suppose the conditions hold. To show $\X_{\Sigma,\beta}\to \X_{\Sigma',\beta'}$ is an isomorphism, it suffices to look locally on the base, so we may assume $\Sigma'$ is a single cone $\sigma'$. The result then follows from Proposition \ref{prop:isos-of-affines}.

 Conversely, suppose $(\Phi,\phi)$ induces an isomorphism. By Lemma \ref{lem:gen-stabilizer<->torsion}, the map of generic stabilizers is $G_{0\to N_\tor}\to G_{0\to N'_\tor}$. Since this is an isomorphism, it follows that $\phi$ restricts to an isomorphism $\phi_\tor\colon N_\tor\to N'_\tor$. The quotient map $\bar\phi\colon N/N_\tor\to N'/N'_\tor$ induced by $\phi$ is the map of 1-parameter subgroups of the stacky tori of $\X_{\Sigma,\beta}$ and $\X_{\Sigma',\beta'}$, respectively, so it is an isomorphism. Since $\phi_\tor$ and $\bar\phi$ are isomorphisms, it follows by the 5-lemma that $\phi$ is an isomorphism.

 Since isomorphisms are affine, Remark \ref{rmk:meaning-of-preimage} shows that $\Phi^{-1}(\sigma')$ is a single cone for each $\sigma'\in\Sigma'$. Finally, Lemma \ref{lem:iso->iso-of-monoids} shows that $\Phi$ induces isomorphisms of monoids.
\end{proof}

\begin{remark}\label{rmk:smooth-toric-stacks-are-fantastacks}
 Suppose $X_\Sigma$ is a smooth toric variety and $\Phi\colon (\ttilde\Sigma,\ZZ^n)\to (\Sigma,L)$ is its Cox construction \cite[\S 5.1]{cls}. Then the induced morphism $\X_{\ttilde\Sigma,\Phi}\to \X_{\Sigma,\id_L}=X_\Sigma$ is an isomorphism. In particular, any smooth toric variety can be expressed as a quotient of a $\GG_m^n$-invariant open subvariety of $\AA^n$ by a subgroup of $\GG_m^n$. Toric stacks of this form are called \emph{fantastacks} (see Section \ref{sec:fantastacks} and Example \ref{eg:fantastack-smooth-toric-varieties}).
\end{remark}

}



\end{document}